\numberwithin{equation}{section}
\theoremstyle{plain}   
\newtheorem{bigtheorem}{Theorem}   
\newtheorem{theorem}[equation]{Theorem}  
\newtheorem{corollary}[equation]{Corollary}     
\newtheorem{lemma}[equation]{Lemma}         
\newtheorem{proposition}[equation]{Proposition} 
\newtheorem{addendum}[equation]{Addendum}
\theoremstyle{definition}
\newtheorem{definition}[equation]{Definition}
\theoremstyle{remark}
\newtheorem{remark}[equation]{Remark}
\newtheorem{example}[equation]{Example}
\newcommand{\Aut}{\operatorname{Aut}}
\newcommand{\Hom}{\operatorname{Hom}}
\newcommand{\map}{\operatorname{map}}
\newcommand{\Map}{\operatorname{Map}}
\newcommand{\End}{\operatorname{End}}
\newcommand{\Ext}{\operatorname{Ext}}
\newcommand{\Gal}{\operatorname{Gal}}
\newcommand{\im}{\operatorname{im}}
\newcommand{\TF}{\operatorname{TF}}
\newcommand{\TC}{\operatorname{TC}}
\newcommand{\TR}{\operatorname{TR}}
\newcommand{\TP}{\operatorname{TP}}
\newcommand{\THH}{\operatorname{THH}}
\newcommand{\HH}{\operatorname{HH}}
\newcommand{\N}{\mathbb{N}}
\newcommand{\Z}{\mathbb{Z}}
\newcommand{\Q}{\mathbb{Q}}
\newcommand{\C}{\mathbb{C}}
\newcommand{\Zp}{\mathbb{Z}_p}
\newcommand{\Fp}{\mathbb{F}_p}
\newcommand{\op}{{\operatorname{op}}}
\newcommand{\holim}{\operatornamewithlimits{holim}}
\newcommand{\ob}{\operatorname{ob}}
\newcommand{\id}{\operatorname{id}}
\newcommand{\pr}{\operatorname{pr}}
\newcommand{\tr}{\operatorname{tr}}
\newcommand{\can}{\operatorname{can}}
\newcommand{\Trd}{\operatorname{Trd}}
\newcommand{\Ird}{\operatorname{Ird}}
\newcommand{\Nrd}{\operatorname{Nrd}}
\newcommand{\Tr}{\operatorname{Tr}}
\newcommand{\Ch}{\mathsf{Ch}}
\newcommand{\tor}{\mathrm{tor}}
\newcommand{\dv}{\mathrm{div}}
\begin{document}

\title{On the $K$-theory of division algebras over local fields}

\titlerunning{$K$-theory of division algebras}

\author{Lars Hesselholt \and Michael Larsen \and Ayelet \rlap{Lindenstrauss}}
\institute{Lars Hesselholt \at
Nagoya University, Nagoya, Japan and University of Copenhagen,
Copenhagen, Denmark\\
\email{larsh@math.nagoya-u.ac.jp}
\and
Michael Larsen \at 
Indiana University, Bloomington, Indiana\\
\email{mjlarsen@indiana.edu}
\and
Ayelet Lindenstrauss \at
Indiana University, Bloomington, Indiana\\
\email{alindens@indiana.edu}
}


\dedication{Andrei Suslin in memoriam}

\date{}

\maketitle

\section*{Introduction}

Let $K$ be a complete discrete valuation field with finite residue
field of characteristic $p$, and let $D$ be a central division algebra
over $K$ of finite index $d$. Thirty years ago, Suslin and
Yufryakov~\cite[Theorem~3]{suslinyufryakov} showed that for all prime
numbers $\ell \neq p$ and integers $j \geqslant 1$, there exists an
isomorphism of $\ell$-adic $K$-groups
\vspace{-1mm}
$$\begin{xy}
(-14,0)*+{ K_j(D,\Z_{\ell}) }="1";
(14,0)*+{ K_j(K,\Z_{\ell}) }="2";
{ \ar^-{\Nrd_{D/K}} "2";"1";};
\end{xy}$$
such that $d \cdot \Nrd_{D/K}$ is equal to the norm homomorphism
$N_{D/K}$. The purpose of this paper is to prove the following
analogous result for the $p$-adic $K$-groups.

\begin{bigtheorem}\label{thm:theoremA}Let $D$ be a central division
algebra of finite index $d$ over a complete discrete valuation field
$K$ with finite residue field of odd characteristic $p$. For all
integers $j \geqslant 1$, there exists a canonical isomorphism of
$p$-adic $K$-groups
\vspace{-1mm}
$$\begin{xy}
(-14,0)*+{ K_j(D,\Zp) }="1";
(14,0)*+{ K_j(K,\Zp) }="2";
{ \ar^-{\Nrd_{D/K}} "2";"1";};
\end{xy}$$
such that $d \cdot \Nrd_{D/K}$ is equal to the norm homomorphism $N_{D/K}$.
\end{bigtheorem}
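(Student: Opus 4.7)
The plan is to pass from algebraic $K$-theory to topological cyclic homology via the cyclotomic trace, and then perform an explicit computation for the maximal orders. Let $\mathcal{O}_K$ be the ring of integers of $K$, let $\mathcal{O}_D$ be the unique maximal $\mathcal{O}_K$-order in $D$ with two-sided maximal ideal $\mathfrak{m}_D$, and let $k$, $k_D$ be the respective residue rings. By Wedderburn's little theorem, $k_D$ is a finite field of order $|k|^d$, so Quillen's computation gives $K_j(k_D;\Zp) = K_j(k;\Zp) = 0$ for $j \geqslant 1$. Using the appropriate d\'evissage/localization sequences relating the orders to their fields of fractions, this reduces the theorem to the statement that the reduced norm induces a $p$-adic equivalence $K(\mathcal{O}_D;\Zp) \to K(\mathcal{O}_K;\Zp)$ in positive degrees, satisfying $d \cdot \Nrd_{\mathcal{O}_D/\mathcal{O}_K} = N_{\mathcal{O}_D/\mathcal{O}_K}$.

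Applying the Dundas--Goodwillie--McCarthy theorem (in its Henselian form) to the pairs $(\mathcal{O}_D,\mathfrak{m}_D)$ and $(\mathcal{O}_K,\mathfrak{m}_K)$, the cyclotomic trace reduces matters further to the analogous statement for $\TC(-;\Zp)$. To compute $\TC(\mathcal{O}_D;\Zp)$, one exploits the classical presentation $\mathcal{O}_D = \mathcal{O}_{K_d}\langle \Pi \rangle/(\Pi a - \sigma(a)\Pi,\;\Pi^d - \pi)$, where $K_d/K$ is the unramified extension of degree $d$, $\sigma$ lifts the $|k|$-power Frobenius, and $\pi$ is a uniformizer of $K$. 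Following the strategy of Hesselholt--Madsen, this presentation allows $\THH(\mathcal{O}_D)$ with its cyclotomic structure to be described as a twisted form of $\THH(\mathcal{O}_{K_d})$, from which $\TC(\mathcal{O}_D;\Zp)$ can be extracted by the usual Tate and fixed-point constructions.

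The reduced norm itself is constructed on the $\TC$-level by combining the splitting $D \otimes_K K_d \simeq M_d(K_d)$ with Morita invariance and Galois descent along $\Gal(K_d/K)$. Tracing through this construction and comparing with the explicit description of $\TC(\mathcal{O}_D;\Zp)$ from the previous paragraph shows, on the one hand, that $\Nrd_{D/K}$ is an isomorphism after $p$-completion, and on the other hand, that its composition with $d$ coincides with the transfer $N_{D/K}$, as this relation already holds for the analogous maps between $K_d$ and $K$.

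The main obstacle will be the computation of $\THH(\mathcal{O}_D)$ and its cyclotomic structure with the noncommutative twist by $\sigma$ fully tracked, and the subsequent identification of the reduced norm as the natural map produced by that computation. Control of the twisted Frobenius action at the level of cyclotomic fixed points, together with the interaction of the homotopy orbit and Tate spectral sequences that compute $\TC$ from $\THH$, is where I expect the hypothesis that $p$ be odd to enter, via $2$-primary subtleties that would otherwise spoil the desired equivalence.
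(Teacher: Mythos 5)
Your overall strategy — reduce $K$-theory to $\TC$ via the cyclotomic trace, exploit the twisted polynomial presentation of the maximal order, and build the reduced norm from the Morita splitting plus Galois descent — is genuinely the same skeleton as the paper's proof. However, there are two concrete gaps that the paper has to work rather hard to fill, and your proposal passes over both.

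First, the Morita equivalence $D \otimes_K L \simeq M_d(L)$ (with $L = K_d$ your maximal unramified subfield) does \emph{not} descend to the integral level: $A \otimes_S T$ is not a matrix algebra over $T$, and in fact is not even a maximal $T$-order in $D \otimes_K L$. So the naive plan of "split, Morita, descend" does not produce a map of orders or of integral $\THH$-spectra. The paper addresses this by proving that $A \otimes_S T$ is nevertheless left regular (Theorem~\ref{thm:theoremC}), giving a complete structural description of $\mathsf{Mod}_{A \otimes_S T}$ as a category of $\Gal(L/K)$-graded twisted $T$-modules (Proposition~\ref{prop:gradedmodules}), and then showing that the composite adjunction $(\delta^* \circ \pi_*, \pi^! \circ \delta_*)$ — not a Morita equivalence integrally — still induces an equivalence on the relative localization spectra $\THH(A \otimes_S T \,|\, D \otimes_K L)$ and $\THH(T \,|\, L)$, which is what one needs for descent. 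This regularity result, and the machinery it enables, is the technical core of the argument and has no analogue in your sketch; without it, the construction of $\Nrd$ as a map of (cyclotomic) spectra does not get off the ground.

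Second, the precise location at which the relation $d \cdot \Nrd = N$ holds is delicate, and your proposal is stated in a way that would lead to a false claim. The paper proves that $d \cdot \Trd_{A/S} \simeq \Tr_{A/S}$ as maps of $\THH(S\,|\,K,\Zp)$-modules in spectra with $\mathbb{T}$-action (i.e., on the level of $\TC^-$ and $\TP$, where $\TC_0^-(S\,|\,K,\Zp) \cong W(k_S)$ is an integral domain, so the generator is rigid), but \emph{not} on the level of cyclotomic spectra when $p \mid d$: this is exactly part~(3) of Theorem~\ref{thm:theoremB}, and reflects the fact that $\TC_0(A\,|\,D,\Zp)$ is free of rank $2$, so the lift $\tilde{y}$ of the $\TC^-_0$-generator is not unique and cannot be chosen so that $\Tr_{A/S}(\tilde y)$ is divisible by $d$. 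The passage back to $K_j$ for $j\geqslant 1$ then has to go through the exact sequences comparing $\TC$, $\TC^-$, and $\TP$ (using that $\varphi - \can$ is an isomorphism on $\TC^-_j(S\,|\,K,\Zp)$ for $j$ even and positive), precisely to dodge the degree-$0$ obstruction. Your sketch asserts the relation "already holds for the analogous maps between $K_d$ and $K$" and transports it directly, which is not how it works on $\TC$ in degree $0$.

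Two smaller points: the paper does not reduce to $K(\mathcal O_D;\Zp)\to K(\mathcal O_K;\Zp)$ and then re-inflate; instead it uses~\cite[Theorem D]{hm} to identify $K_j(D,\Zp)$ with the \emph{relative} term $\TC_j(A\,|\,D,\Zp)$ directly for $j\geqslant 1$ (your localization argument is correct in spirit but in degree $1$ requires the $5$-lemma over the two localization sequences, not a straight d\'evissage). And the role of $p$ odd is not generic $2$-primary trouble but the specific citation of~\cite[Theorem~5.5.1]{hm4} (via Lemma~\ref{lem:frobeniusequivalence}) needed for parts~(2)--(3) of Theorem~\ref{thm:theoremB}; part~(1), and all results in equal characteristic, do not need $p$ odd.
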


By contrast to the norm homomorphism $N_{D/K}$, we do not know that
the reduced norm isomorphism $\Nrd_{D/K}$ is induced by a map of
$K$-theory spectra, and such a map may well not exist; compare
Merkurjev~\cite[Proposition~4]{merkurjev}.

The tool that makes it possible to now prove Theorem~\ref{thm:theoremA}  is
the topological cyclic homology introduced by B\"{o}kstedt, Hsiang,
and Madsen~\cite{bokstedthsiangmadsen}. 
The recent work by Nikolaus and Scholze~\cite{nikolausscholze} has
greatly clarified the nature of this theory, and we will use their
setup, which we briefly explain. Let $\smash{
  \mathsf{Sp}_p^{B\,\mathbb{T}} }$ be
the infinity-category of $p$-complete spectra with an action by the
circle $\mathbb{T}$. Given $\smash{ X \in
  \mathsf{Sp}_p^{B\,\mathbb{T}} }$,
we write
\vspace{-1mm}
$$\begin{xy}
(0,0)*+{ \TC^{-}(X) }="1";
(23,0)*+{ \TP(X) }="2";
{ \ar^-{\can} "2";"1";};
\end{xy}$$
for the canonical map from the homotopy fixed points spectrum to the
Tate spectrum of $X$. We refer to these spectra as the negative
topological cyclic homology and the periodic topological cyclic
homology of $X$, respectively. A cyclotomic structure on $X$ is a map
of spectra with $\mathbb{T}$-action
\vspace{-1mm}
$$\begin{xy}
(0,0)*+{ \phantom{X^{tC_p}} X }="1";
(20,0)*+{ X^{tC_p} \phantom{X} }="2";
{ \ar@<-.3ex>^-{\varphi} "2";"1";};
\end{xy}$$
called the Frobenius map. The target of this map is the Tate
spectrum of $X$ with respect to the subgroup $C_p \subset \mathbb{T}$
of order $p$. It has a residual $\mathbb{T}/C_p$-action, which we
consider a $\mathbb{T}$-action via the $p$th root isomorphism
$\rho \colon \mathbb{T} \to \mathbb{T}/C_p$. The Frobenius induces a
map of homotopy $\mathbb{T}$-fixed points,
\vspace{-1mm}
$$\begin{xy}
(0,0)*+{ \TC^{-}(X) = X^{h\mathbb{T}} }="1";
(45,0)*+{ (X^{tC_p})^{h(\mathbb{T}/C_p)} \simeq X^{t\mathbb{T}} =
  \TP(X), }="2";
{ \ar^-{\varphi^{h\mathbb{T}}} "2";"1";};
\end{xy}$$
the target of which is canonically identified with the
$\mathbb{T}$-Tate spectrum by the Tate orbit lemma of Nikolaus and
Scholze~\cite[Lemma~I.2.1]{nikolausscholze}. We will abuse notation
and write $\varphi \colon \TC^{-}(X) \to \TP(X)$ for the resulting
map, which we again call the Frobenius map. Now, the
topological cyclic homology of $X$ is the homotopy equalizer
\vspace{-1mm}
$$\begin{xy}
(0,0)*+{ \TC(X) }="1";
(22,0)*+{ \TC^{-}(X) }="2";
(44,0)*+{ \TP(X) }="3";
{ \ar^-{i} "2";"1";};
{ \ar@<.8ex>^-{\varphi} "3";"2";};
{ \ar@<-.8ex>_-{\can} "3";"2";};
\end{xy}$$
of the Frobenius map and the canonical map. Nikolaus and
Scholze also show that, in a natural way, the $p$-complete
cyclotomic spectra can be organized into a symmetric monoidal stable
infinity-category $\mathsf{CycSp}_p$, whose tensor unit is the
$p$-complete sphere spectrum $\mathbb{S}_p$ with trivial
$\mathbb{T}$-action and with the composite map
\vspace{-1mm}
$$\begin{xy}
(0,0)*+{ \phantom{\mathbb{S}_p^{tC_p}} \mathbb{S}_p }="1";
(20,0)*+{ \mathbb{S}_p^{hC_p} }="2";
(40,0)*+{ \mathbb{S}_p^{tC_p} \phantom{\mathbb{S}_p} }="3";
{ \ar@<-.2ex>^-{\tilde{\varphi}} "2";"1";};
{ \ar@<-.2ex>^-{\can} "3";"2";};
\end{xy}$$
as its cyclotomic structure map. Here, the left-hand map is the
canonical map from a spectrum with trivial action to its fixed
points. Finally, the left-hand map in the homotopy equalizer above is
canonically identified with the ``forgetful'' map
\vspace{-1mm}
$$\begin{xy}
(0,0)*+{ \map_{\mathsf{CycSp}_p}(\mathbb{S}_p,X) }="1";
(35,0)*+{ \map_{\mathsf{Sp}_p^{B\,\mathbb{T}}}(\mathbb{S}_p,X) }="2";
{ \ar@<.4ex>^-{i} "2";"1";};
\end{xy}$$
between mapping spectra in the stable infinity-categories of
$p$-complete cyclotomic spectra and $p$-complete spectra with
$\mathbb{T}$-action, respectively.

We now let $K$ be the quotient field
of a complete discrete valuation ring $S$ with finite residue field
$k_S$ of characteristic $p$, let $D$ be a central division algebra
over $K$, and let $d = \dim_K(D)^{\nicefrac{1}{2}}$ be the index of
$D$ over $K$. The valuation on $K$ extends uniquely to a valuation on
$D$, and the subring $A \subset D$ of elements of non-negative
valuation is the unique maximal $S$-order. In~\cite{hm4}, the first
author and Madsen produced a $p$-complete cyclotomic spectrum $\THH(S
\,|\, K,\Zp)$ and a trace map
\vspace{-1mm}
$$\begin{xy}
(0,0)*+{ K(K,\Zp) }="1";
(29,0)*+{ \TC(S \,|\, K,\Zp) }="2";
{ \ar^-{\tr} "2";"1";};
\end{xy}$$
to its topological cyclic homology spectrum, which we abbreviate as
indicated, from the $p$-completion of the algebraic $K$-theory
spectrum of the field $K$. This construction also defines a
$p$-complete cyclotomic spectrum $\THH(A\,|\,D,\Zp)$ and a trace map
\vspace{-1mm}
$$\begin{xy}
(0,0)*+{ K(D,\Zp) }="1";
(29,0)*+{ \TC(A \,|\, D,\Zp) }="2";
{ \ar^-{\tr} "2";"1";};
\end{xy}$$
to its topological cyclic homology spectrum from the $p$-completion of
the algebraic $K$-theory spectrum of the division ring $D$. Moreover,
by~\cite[Theorem~D]{hm}, both maps induce isomorphisms of homotopy
groups in degrees $j \geqslant 1$. Hence, Theorem~\ref{thm:theoremA} is a
consequence of the following more precise
Theorem~\ref{thm:theoremB}. In order to state it, we first remark that
$\THH(S\,|\,K,\Zp)$ has a canonical structure of an
$\mathbb{E}_{\infty}$-algebra in $\mathsf{CycSp}_p$, that
$\THH(A\,|\,D,\Zp)$ has a canonical structure of a
$\THH(S\,|\,K,\Zp)$-module, and that the trace map $\Tr_{A/S}$ is a
map of $\THH(S\,|\,K,\Zp)$-modules
\vspace{-1mm}
$$\begin{xy}
(0,0)*+{ \THH(A\,|\,D,\Zp) }="1";
(34,0)*+{ \THH(S\,|\,K,\Zp) }="2";
{ \ar^-{\Tr_{A/S}} "2";"1";};
\end{xy}$$
in $\mathsf{CycSp}_p$. The following is our main result.

\begin{bigtheorem}\label{thm:theoremB}Let $K$ be the quotient field of
a complete discrete valuation ring $S$ with finite residue field of odd
characteristic $p$, let $D$ be a central division algebra over $K$,
and let $A \subset D$ be the maximal $S$-order.
\begin{enumerate}
\item[{\rm (1)}]There exists an equivalence of \hskip1.5pt
$\THH(S\,|\,K,\Zp)$-modules in cyclotomic spectra,
\vspace{-1mm}
$$\begin{xy}
(0,0)*+{ \THH(A\,|\,D,\Zp) }="1";
(35,0)*+{ \THH(S\,|\,K,\Zp). }="2";
{ \ar^-{\Trd_{A/S}} "2";"1";};
\end{xy}$$
\item[{\rm (2)}]As maps of \hskip1.5pt $\THH(S\,|\,K,\Zp)$-modules in
  spectra with $\mathbb{T}$-action,
$$d \cdot \Trd_{A/S} \,\simeq\, \Tr_{A/S}.$$
\item[{\rm (3)}]If $p$ divides $d$, then, as maps of \hskip1.5pt
  $\THH(S\,|\,K,\Zp)$-modules in cyclotomic spectra,
$$d \cdot \Trd_{A/S} \,\not\simeq\, \Tr_{A/S}.$$
\end{enumerate}
\end{bigtheorem}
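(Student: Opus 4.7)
The plan is Galois descent along an unramified splitting. Choose a maximal unramified subfield $L \subset D$ with $[L:K] = d$, let $T \subset A$ be its ring of integers, and set $G = \Gal(L/K) \cong \Z/d$ with generator $\sigma$. After base change, $D \otimes_K L \cong M_d(L)$ and $A \otimes_S T \cong M_d(T)$, with $A$ and $S$ recovered as $G$-fixed points for a twisted action on $M_d(T)$ combining the Galois action on entries with conjugation by a matrix $P_\sigma$ implementing $\sigma$. Applying $\THH(-\,|\,-,\Zp)$ and invoking Morita invariance in $\mathsf{CycSp}_p$ yields an equivalence of $\THH(T\,|\,L,\Zp)$-modules
$$\THH(M_d(T)\,|\,M_d(L),\Zp) \simeq \THH(T\,|\,L,\Zp),$$
under which the conjugation component of the twisted action becomes inner and hence trivial on $\THH$, leaving only the honest Galois action. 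By $p$-adic Galois descent for $\THH$ along unramified extensions of complete discretely valued fields, taking $G$-homotopy fixed points identifies the right side with $\THH(S\,|\,K,\Zp)$ and the left side with $\THH(A\,|\,D,\Zp)$; the resulting equivalence in $\mathsf{CycSp}_p$ is $\Trd_{A/S}$.

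\textbf{Part (2).} After forgetting the cyclotomic structure, $\Tr_{A/S}$ corresponds, after base change to $T$, to the ordinary trace $\Tr_{M_d(T)/T}$, which factors as the matrix trace $M_d(T) \to T$ followed by multiplication by $d$, since the regular representation of $M_d(T)$ decomposes as $d$ copies of the defining representation. Because Morita invariance of $\THH$ is implemented by the matrix trace, descent yields $d \cdot \Trd_{A/S} \simeq \Tr_{A/S}$ as maps of $\THH(S\,|\,K,\Zp)$-modules in $\mathsf{Sp}_p^{B\mathbb{T}}$.

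\textbf{Part (3).} For this I would use the Nikolaus--Scholze description of cyclotomic mapping spectra as fitting in a fiber sequence
$$\map_{\mathsf{CycSp}_p}(X,Y) \to \map_{\mathsf{Sp}_p^{B\mathbb{T}}}(X,Y) \to \map_{\mathsf{Sp}_p^{B\mathbb{T}}}(X,Y^{tC_p})$$
for $X = \THH(A\,|\,D,\Zp)$ and $Y = \THH(S\,|\,K,\Zp)$, whose second map is the Frobenius deviation $f \mapsto \varphi_Y \circ f - f^{tC_p} \circ \varphi_X$. The $\mathbb{T}$-equivariant homotopy from (2) between $d\cdot\Trd_{A/S}$ and $\Tr_{A/S}$ lifts to a cyclotomic homotopy if and only if the corresponding class in the rightmost mapping spectrum vanishes. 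Pulling back through $\Trd_{A/S}$ to $\THH(T\,|\,L,\Zp)$, where the cyclotomic Frobenius is computable explicitly via the $p$th-power map on the twisting matrix, I would identify the obstruction with a class detected by the $p$-adic valuation of $d$ inside an appropriate quotient of $\pi_0 \TP(S\,|\,K,\Zp) \cong W(k_S)$; this class is nonzero precisely when $p \mid d$. The main obstacle is this final Frobenius-deviation computation, which requires the explicit cyclotomic structure on $\THH(A\,|\,D,\Zp)$ developed in the companion work of Hesselholt and Madsen.
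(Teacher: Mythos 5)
Your outline captures the right big-picture idea — reduce to the split case along the maximal unramified subfield $L/K$ and transport the Morita equivalence through Galois descent — but there are two genuine gaps, one of which is the central technical difficulty that the paper was written to overcome.

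\textbf{The identification $A \otimes_S T \cong M_d(T)$ is false.} This is the crux. While $D \otimes_K L \cong M_d(L)$, the $T$-order $A \otimes_S T$ is \emph{not} a maximal order in $D \otimes_K L$; under the embedding $l \colon A \otimes_S T \hookrightarrow \End_T(A) \cong M_d(T)$ it lands in the proper subring of matrices that are lower triangular modulo $\mathfrak{m}_T$ (Proposition~\ref{prop:milnorsquare}). Consequently the adjunction between $\mathcal{P}_{A \otimes_S T}$ and $\mathcal{P}_T$ given by restriction and coextension is \emph{not} an adjoint equivalence integrally; it only becomes one after inverting $\pi_K$. Your descent therefore does not descend $A \otimes_S T$ to $A$ at all. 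The paper's workaround is Theorem~\ref{thm:theoremC}: although $A \otimes_S T$ is not maximal, it is left regular, so the relative $\THH$ term $\THH(A \otimes_S T \,|\, D \otimes_K L)$ is still defined via the localization framework of Section~\ref{sec:localization}, and the counit and unit of the adjunction are exact. This regularity theorem is the ``rather fortunate surprise'' the paper emphasizes; without it the whole argument collapses.

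\textbf{Descent by $G$-homotopy fixed points of cyclotomic spectra is not what the paper does, and your version of it conflicts with part (3).} Corollary~\ref{cor:nontrivialaction} shows that when $p \mid d$ the $G$-action on $\THH(A\,|\,D)$ is \emph{non-trivial} as a cyclotomic spectrum, precisely because if it were trivial the descent square would immediately produce a cyclotomic equivalence $\Trd_{A/S}$ with $d\cdot\Trd_{A/S}\simeq\Tr_{A/S}$ cyclotomically, contradicting (3). What the paper does instead is prove the $G$-action is trivial \emph{on homotopy groups} (Lemma~\ref{lem:conjugation}), descend on the level of the graded groups $\TR_*^n$ via Corollary~\ref{cor:tracyclic} to get a free generator $y_n$, pass to $\TF_0$, and then locate a class $\tilde{y} \in \TC_0(A\,|\,D,\Zp)$ lifting it. The cyclotomic equivalence is the map classified by $\tilde{y}$, \emph{not} a descended Morita equivalence. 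This is why (2) only holds after forgetting the cyclotomic structure: $\Tr_{A/S}(y) = d\cdot 1$ is an equation in $\TC_0^-$, but the lift $\tilde y$ to $\TC_0$ involves a choice, and (3) shows the choice cannot be made compatibly with division by $d$ when $p\mid d$.

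For part (3), the paper avoids the Frobenius-deviation computation you defer to the companion work. Instead it exploits the exact sequence $0 \to \TP_1(\cdot)_\varphi \to \TC_0(\cdot) \to \TC_0^-(\cdot)^\varphi \to 0$ (both ends are free $\Zp$-modules of rank one by Corollary~\ref{cor:tczero} and the Witt vector computation), writes $\Tr_{A/S}$ on $\TC_0$ as an upper triangular matrix $\bigl(\begin{smallmatrix}d&a\\0&d\end{smallmatrix}\bigr)$ relative to a basis adapted to the filtration, and shows $p\nmid b$ where $\tr([D])$ has coordinates $(b,d)$, using the localization sequence and the isomorphism $K_0(A,\Zp) \cong \TC_0(A,\Zp)$. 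This is a more hands-on argument than the obstruction-theoretic sketch you propose, but it has the advantage of not requiring an explicit model for the cyclotomic Frobenius on $\THH(A\,|\,D,\Zp)$.
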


The first part of the theorem shows, in particular, that
$\TC_*(A\,|\,D,\Zp)$ is free on a generator of degree $0$ as a graded
$\TC_*(S\,|\,K,\Zp)$-module, and this implies the first statement in
Theorem~\ref{thm:theoremA}. This generator is not in the image of the
cyclotomic trace, and $K_*(D,\Zp)$ is neither free nor
finitely generated as a graded $K_*(K,\Zp)$-module. 

To produce the desired equivalence of $\THH(S\,|\,K,\Zp)$-modules in
$\mathsf{CycSp}_p$, we instead produce its inverse equivalence
\vspace{-1mm}
$$\begin{xy}
(0,0)*+{ \THH(S\,|\,K,\Zp) }="1";
(35,0)*+{ \THH(A\,|\,D,\Zp). }="2";
{ \ar^-{\Ird_{A/S}} "2";"1";};
\end{xy}$$
The space of such maps has group of components $\TC_0(A\,|\,D,\Zp)$,
and, similarly, the corresponding space of maps in $\smash{
  \mathsf{Sp}_p^{B\,\mathbb{T}} }$ has group of components
$\TC_0^{-}(A\,|\,D,\Zp)$. To understand $\TC_*^{-}(A\,|\,D,\Zp)$, we
choose a maximal unramified subfield $K \subset L \subset D$ and let
$S \subset T \subset A$ be the subring of elements of non-negative
valuation. The extension $L/K$ is of degree $d$ and the Galois group
$G$ of $L/K$ is canonically isomorphic to that of the extension
$k_T/k_S$ of residue fields. In general, for $R$ a unital associative
ring, we write $\mathcal{P}_R$ for the exact category of finitely
generated projective left $R$-modules. In the case at hand, the ring
homomorphisms
$$\xymatrix{
{ A \otimes_ST } &
{ T \otimes_ST } \ar[l]_-(.43){\pi} \ar[r]^-{\delta} &
{ T } \cr
}$$
given by the canonical inclusion and the multiplication are finite
locally free, and hence, we have the functors $\pi_* \colon
\mathcal{P}_{A\otimes_ST} \to \mathcal{P}_{T\otimes_ST}$ and
$\delta_* \colon \mathcal{P}_{T} \to \mathcal{P}_{T\otimes_ST}$ given
by restriction-of-scalars along $\pi$ and $\delta$, respectively. Let
$\pi^!$ be the right adjoint of $\pi_*$ given by
coextension-of-scalars along $\pi$, and let $\delta^*$ be the left
adjoint of $\delta_*$ given by extension-of-scalars along $\delta$. We
consider the diagram
$$\begin{xy}
(0,7)*+{ \mathcal{P}_{A} }="11";
(25,7)*+{ \mathcal{P}_{A\otimes_ST} }="12";
(0,-7)*+{ \mathcal{P}_S }="21";
(25,-7)*+{ \mathcal{P}_T, }="22";
{ \ar^-{f^*} "12";"11";};
{ \ar^-{f^*} "22";"21";};
{ \ar@<-.7ex>_-{\Trd_{A\otimes_ST/T}} "22";"12";};
{ \ar@<-.7ex>_-{\Ird_{A\otimes_ST/T}} "12";"22";};
\end{xy}$$
where the right-hand adjunction is the composite adjunction
$(\delta^* \circ \pi_*,\pi^! \circ \delta_*)$, and where the
horizontal functors are extension-of-scalars along the canonical
inclusion $f \colon S \to T$. Said adjunction is not an adjoint
equivalence of categories, but it becomes one after
extension-of-scalars along the canonical inclusion $h \colon S \to K$,
exhibiting the well-known Morita equivalence of $D \otimes_KL$ and
$L$. The ring $A \otimes_ST$ is not a maximal $T$-order in
$D \otimes_KL$, so the following result, which we prove in
Section~\ref{sec:cats}, came as a rather fortunate surprise. 

\begin{bigtheorem}\label{thm:theoremC}The ring $A \otimes_ST$ is left
regular.
\end{bigtheorem}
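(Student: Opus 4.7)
The plan is to prove that $A \otimes_S T$ is a hereditary $T$-order in the central simple $L$-algebra $D \otimes_K L$, whence left regularity (indeed, left global dimension at most $1$) follows at once from the classical theory of hereditary orders. Choose a uniformizer $\Pi \in A$ satisfying $\Pi t = \sigma(t)\Pi$ for $t \in T$, where $\sigma$ generates $G = \Gal(L/K)$, and $\Pi^d = \pi$ for some uniformizer $\pi$ of $S$. Then $A = \bigoplus_{i=0}^{d-1} T \Pi^i$ as a left $T$-module, and, writing $R = T \otimes_S T$,
$$A \otimes_S T \;=\; \bigoplus_{i=0}^{d-1} R \Pi^i,$$
with $\Pi r = \tilde{\sigma}(r)\Pi$ for $r \in R$, where $\tilde{\sigma} = \sigma \otimes \id$ acts on the first tensor factor of $R$.

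The first step is to identify the Jacobson radical of $A \otimes_S T$. Since $\Pi$ normalizes $R$, the set $J := \Pi \cdot (A \otimes_S T) = (A \otimes_S T) \cdot \Pi$ is a two-sided ideal, and direct computation gives
$$(A \otimes_S T)/J \;\cong\; R/\pi R \;\cong\; k_T \otimes_{k_S} k_T \;\cong\; \prod_{g \in G} k_T,$$
a finite product of copies of $k_T$, using that $k_T/k_S$ is finite étale Galois. Since $J^d = \pi \cdot (A \otimes_S T)$ and $A \otimes_S T$ is $\pi$-adically complete (being a finite $S$-module), the powers $J^n$ tend to zero $\pi$-adically, so $1 + J \subseteq (A \otimes_S T)^{\times}$; combined with the semisimplicity of the quotient, this shows $J$ is the Jacobson radical.

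The crucial second step is to show that $J$ is free of rank $1$ as a left $A \otimes_S T$-module. For this, note that $A \otimes_S T$ is free of rank $d$ as a right $A$-module, and $\Pi$ is a non-zero-divisor in the domain $A$, so right multiplication by $\Pi$ on $A \otimes_S T$ is injective. Hence the map $A \otimes_S T \to J$, $x \mapsto x\Pi$, is an isomorphism of left $A \otimes_S T$-modules. Now $A \otimes_S T$ is a Noetherian $T$-order in the semisimple $L$-algebra $D \otimes_K L$, and its Jacobson radical is left projective; the classical characterization of hereditary orders (see, e.g., Reiner, \textit{Maximal Orders}, \S39) then gives that $A \otimes_S T$ is left hereditary, so its left global dimension is at most $1$, and it is left regular.

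The principal conceptual obstacle is that $A \otimes_S T$ is \emph{not} a maximal $T$-order in $D \otimes_K L$, so one cannot invoke maximality, and there is no a priori reason to expect a non-maximal order to be hereditary. The fortunate input is that $\Pi$ continues to generate the radical after base change from $S$ to $T$: the twist $\sigma \otimes \id$ on $R$ is still implemented by conjugation by $\Pi$, and the resulting principality of $J$ is exactly what forces the hereditary property.
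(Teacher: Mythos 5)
Your proof is correct and takes a genuinely different route from the paper's. The paper also establishes the stronger statement that $A \otimes_S T$ is left hereditary, but deduces it from the equivalence of module categories $F \colon \mathsf{Mod}_{A \otimes_S T} \to \mathsf{Mod}_{\,T,G}$ of Proposition~\ref{prop:gradedmodules} and Addendum~\ref{add:gradedmodulesprojectives}: projectivity of a finitely generated $A \otimes_S T$-module is detected on the underlying $T$-module, so any submodule of a finitely generated projective $A \otimes_S T$-module is finitely generated and torsion-free over the discrete valuation ring $T$, hence projective over $T$, hence projective over $A \otimes_S T$. You instead identify the Jacobson radical $J = (\pi_D \otimes 1)\cdot(A \otimes_S T)$ directly, observe that it is a principal two-sided ideal generated by an element that becomes invertible in $D \otimes_K L$, so that $J$ is free of rank one as a left module, and then cite the classical characterization of hereditary orders over a complete discrete valuation ring by projectivity of the radical (Auslander--Goldman, Harada; Reiner, \emph{Maximal Orders}, \S39). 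Your computation $(A \otimes_S T)/J \cong k_T \otimes_{k_S} k_T \cong \prod_{g \in G} k_T$ agrees with the picture of Proposition~\ref{prop:milnorsquare} and Remark~\ref{rem:milnorsquare}. The paper's route pays for itself because the category $\mathcal{M}_{\,T,G}$ and the structure of its projectives are used again in Section~\ref{sec:Trd} to construct the reduced trace; your route is shorter and more self-contained if the goal is Theorem~\ref{thm:theoremC} alone, at the cost of invoking a nontrivial structure theorem in place of the elementary submodule argument.
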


Here, we follow Bass~\cite[p.~122]{bass} and call a ring $R$ left
regular if every finitely generated left $R$-module admits a finite 
resolution by finitely generated projective left
$R$-modules. Using Theorem~\ref{thm:theoremC}, we show that, in the
diagram
$$\begin{xy}
(0,7)*+{ \TC_*^{-}(A\,|\,D,\Zp) }="11";
(47,7)*+{ H^0(G,\TC_*^{-}(A \otimes_ST \,|\, D \otimes_KL,\Zp)) }="12";
(0,-7)*+{ \TC_*^{-}(S\,|\,K,\Zp) }="21";
(47,-7)*+{ H^0(G,\TC_*^{-}(T\,|\,L,\Zp)), \hspace{3mm} }="22";
{ \ar^-{f^*} "12";"11";};
{ \ar^-{f^*} "22";"21";};
{ \ar@<-1.4ex>_-{\Trd_{A \otimes_ST/T}} "22";"12";};
{ \ar_-{\Ird_{A \otimes_ST/T}} "12";"22";};
\end{xy}$$
the horizontal morphisms are isomorphisms, and the Morita equivalence
mentioned above implies that also the vertical morphisms are
isomorphisms. All morphisms in the diagram are graded
$\TC_*^{-}(S\,|\,K,\Zp)$-module homomorphisms, so we conclude that
the graded $\TC_*^{-}(S\,|\,K,\Zp)$-module $\TC_*^{-}(A\,|\,D,\Zp)$ is
free on a single generator of degree zero. We let
$y \in \TC_0^{-}(A\,|\,D,\Zp)$ be the unique generator with the
property that $f^*(y) = \Ird_{A \otimes_ST/T}(f^*(1))$. It satisfies that
$\varphi(y) = \can(y)$, and accordingly, there exists
$\tilde{y} \in \TC_0(A\,|\,D,\Zp)$ with $i(\tilde{y}) = y$. This
implies part~(1) of Theorem~\ref{thm:theoremB}, and part~(2) follows
from the fact that $\smash{ \Tr_{A/S}(y) = d \cdot 1 }$. This equation
also characterizes the generator $y$, since the common groups
$\TC_0^{-}(A\,|\,D,\Zp)$ and $\TC_0^{-}(S\,|\,K,\Zp)$ are free
$\Zp$-modules of rank one. By contrast, the common groups
$\TC_0(A\,|\,D,\Zp)$ and $\TC_0(S\,|\,K,\Zp)$ are free $\Zp$-modules
of rank two, and we show that if $p$ divides $d$, then it is not
possible to choose the generator $\tilde{y}$ such that
$\Tr_{A/S}(\tilde{y})$ is a divisible by $d$, which implies part~(3)
of Theorem~\ref{thm:theoremB}.

The uniqueness of the generator $y \in \TC_0^{-}(A\,|\,D,\Zp)$ with
$\Tr_{A/S}(y) = d \cdot 1$ implies that the reduced trace isomorphisms
on negative topological cyclic homology groups and on periodic
cyclic homology groups are canonical and satisfy
$$d \cdot \Trd_{A/S} = \Tr_{A/S}.$$
That the corresponding statements hold on topological cyclic homology
groups is not immediately clear and is false in degree zero, if $p$
divides $d$. However, for $j \geqslant 1$ and odd, we show that there
are exact sequences
$$\begin{xy}
(0,7)*+{ 0 }="11";
(0,-7)*+{ 0 }="21";
(22,7)*+{ \TC_j(A\,|\,D,\Zp) }="12";
(22,-7)*+{ \TC_j(S\,|\,K,\Zp) }="22";
(55,7)*+{ \TC_j^{-}(A\,|\,D,\Zp) }="13";
(55,-7)*+{ \TC_j^{-}(S\,|\,K,\Zp) }="23";
(90,7)*+{ \TP_j(A\,|\,D,\Zp)\phantom{,} }="14";
(90,-7)*+{ \TP_j(S\,|\,K,\Zp), }="24";
{ \ar "12";"11";};
{ \ar^-{i} "13";"12";};
{ \ar^-{\varphi - \can} "14";"13";};
{ \ar^-{\Trd_{A/S}} "22";"12";};
{ \ar^-{\Trd_{A/S}} "23";"13";};
{ \ar^-{\Trd_{A/S}} "24";"14";};
{ \ar "22";"21";};
{ \ar^-{i} "23";"22";};
{ \ar^-{\varphi - \can} "24";"23";};
\end{xy}$$
which show that also the left-hand map $\Trd_{A/S}$ is canonical and
that its $d$th multiple is equal to $\Tr_{A/S}$. Similarly, for
$j \geqslant 2$ and even, there are exact sequences
$$\begin{xy}
(0,7)*+{ \TC_{j+1}^{-}(A\,|\,D,\Zp) }="11";
(0,-7)*+{ \TC_{j+1}^{-}(S\,|\,K,\Zp) }="21";
(38,7)*+{ \TP_{j+1}(A\,|\,D,\Zp)\phantom{,} }="12";
(38,-7)*+{ \TP_{j+1}(S\,|\,K,\Zp), }="22";
(73,7)*+{ \TC_j(A\,|\,D,\Zp) }="13";
(73,-7)*+{ \TC_j(S\,|\,K,\Zp) }="23";
(95,7)*+{ 0 }="14";
(95,-7)*+{ 0 }="24";
{ \ar^-{\varphi-\can} "12";"11";};
{ \ar^-{\partial} "13";"12";};
{ \ar "14";"13";};
{ \ar^-{\Trd_{A/S}} "21";"11";};
{ \ar^-{\Trd_{A/S}} "22";"12";};
{ \ar^-{\Trd_{A/S}} "23";"13";};
{ \ar^-{\varphi-\can} "22";"21";};
{ \ar^-{\partial} "23";"22";};
{ \ar "24";"23";};
\end{xy}$$
which show that the right-hand map $\Trd_{A/S}$ is canonical and that
its $d$th multiple is equal to $\Tr_{A/S}$. This proves that latter statement in
Theorem~\ref{thm:theoremA} that the reduced norm isomorphism is
canonical and satisfies $d \cdot \Nrd_{D/K} = N_{D/K}$.

Our proofs of Theorem~\ref{thm:theoremB}~(2)--(3)
use~\cite[Theorem~5.5.1]{hm4}, which is the reason that we assume 
$p$ to be odd. The remaining results hold also for $p = 2$, as do
all our results, if $S$ is of equal characteristic. We expect our results
to hold also for $p = 2$.

\begin{acknowledgements}We gratefully acknowledge the generous
assistance that we have received from a DNRF Niels Bohr Professorship,
Simons Foundation Grant 359565, and NSF Grants DMS-1702152 and
DMS-1552766. The first author also thanks Indiana University and the
Hausdorff Research Institute for Mathematics in Bonn for their
hospitality and support, and the second and third author thank the
University of Copenhagen for its hospitality and support. The first
author further thanks Thomas Geisser for helpful discussions. We 
are much indebted to Jacob Lurie for pointing out a mistake in an
earlier version of the work presented here, and finally, we thank an
anonymous referee for many very helpful remarks.
\end{acknowledgements}

\section{Categories of modules}\label{sec:cats}

In this section, we examine the structure of the category of left
modules over the ring $A \otimes_ST$ and prove
Theorem~\ref{thm:theoremC} of the introduction.

If $R$ is a unital associative ring $R$, then we write
$\mathsf{Mod}_R$ for the category of left $R$-modules, and
if $f \colon R \to S$ is a ring homomorphism, then we define the 
restriction along $f$ to be the functor $f_* \colon \mathsf{Mod}_S \to
\mathsf{Mod}_R$ that to an $S$-module $M$ assigns the $R$-module
$f_*(M)$ with the same underlying additive group as that of $M$ and
with left scalar multiplication by $a \in R$ given by left scalar
multiplication by $f(a) \in S$ and that to an $S$-linear map $h \colon
M \to M'$ assigns the same map $h \colon f_*(M) \to f_*(M')$. We note
that the restriction of scalars along the identity homomorphism
$\id_R$ and the identity functor of $\mathsf{Mod}_R$ are canonically
naturally isomorphic, as are $(g \circ f)_*$ and $f_* \circ g_*$ for
composable ring homomorphisms $f \colon R \to S$ and
$g \colon S \to T$.

The functor $f_*$ admits both a
left adjoint functor and a right adjoint functor. We say that a choice
of an adjunction $(f^*,f_*,\epsilon,\eta)$ from $\mathsf{Mod}_R$ to
$\mathsf{Mod}_S$ is an extension of scalars along $f$; and we say that
a choice of adjunction $(f_*,f^!,\epsilon,\eta)$ from $\mathsf{Mod}_R$
to $\mathsf{Mod}_S$ is a coextension of scalars along 
$f$. If $f \colon R \to S$ and $g \colon S \to T$ are composable ring
homomorphisms, then the functors $g^* \circ f^*$ and $(g \circ f)^*$
and the functors $g^! \circ f^!$ and $(g \circ f)^!$ are canonically
naturally isomorphic, and the extension and coextension along the
identity homomorphism $\id_R$ both are canonically naturally
isomorphic to the identity functor;
compare~\cite[Theorem~IV.7.2]{maclane}.

We write $\mathcal{M}_R$ and $\mathcal{P}_R$ for the full
subcategories of $\mathsf{Mod}_R$ whose objects are the finitely
generated left $R$-modules and the finitely generated projective left
$R$-modules, respectively. Let $f \colon R \to S$ be a ring
homomorphism. The extension of scalars along $f$ restricts to functors
$f^* \colon \mathcal{M}_R \to \mathcal{M}_S$ and
$f^* \colon \mathcal{P}_R \to \mathcal{P}_S$, the former of which is
an exact functor if and only if $f$ is flat; the restriction of
scalars along $f$ restricts to a functor
$f_* \colon \mathcal{M}_S \to \mathcal{M}_R$, if $f$ is finite, and to
a functor $f_* \colon \mathcal{P}_S \to \mathcal{P}_R$, if $f$ is
finite and if $S$ considered as a left $R$-module via $f$ is
projective, both of  which are exact; and the coextension of scalars
along $f$ restricts to exact functors
$f^! \colon \mathcal{M}_R \to \mathcal{M}_S$, if $S$ is a finitely generated projective $R$-module, and
$f^! \colon \mathcal{P}_R \to \mathcal{P}_S$, if, in addition, the coextension of $R$, $\Hom_R(S,R)$, is a
finitely generated  projective $S$-module.  In particular, if $S$ is a finitely generated projective $R$-module
and every object $M$ of $\mathcal{M}_S$ such that $f_*M$ is an object of $\mathcal{P}_R$ must in fact
lie in $\mathcal{P}_S$, then $f^!\colon \mathcal{P}_R \to \mathcal{P}_S$ exists and is exact.

We again let $S$ be a complete discrete valuation
ring with finite residue field $k_S$ of characteristic $p$ and with
quotient field $K$, and let $D$ be a finite dimensional central
division algebra over $K$. We recall the structure of $D$
following~\cite[Chapter~3]{reiner}. The valuation $v_K$ on $K$ extends
uniquely to a discrete valuation $v_D$ on $D$ given by
$$v_D(x) = \frac{1}{\dim_K(D)}v_K(N_{D/K}(x)),$$
where $x \in D^*$ and $N_{D/K} \colon D^* \to K^*$ is the norm. The
algebra $D$ is complete with respect to $v_D$, and the subring
$A \subset D$ of elements of non-negative valuation is both the
integral closure of $S$ in $D$ and the unique maximal $S$-order in
$D$. We choose a maximal subfield $K \subset L \subset D$ with the property
that the extension $L/K$ is unramified and let $T \subset L$ be the
integral closure of $S$. The equality $\dim_K(L) = \dim_L(D)$ holds,
and the common dimension $d$ is called the index of $D$ over
$K$. Hence, if $\pi_D$ is a generator of the unique maximal ideal
$\mathfrak{m}_D \subset A$, then the tuple
$(1,\pi_D,\dots,\pi_D^{d-1})$ is a basis of $D$ as a left $L$-vector
space. Now, by~\cite[Theorem~14.5]{reiner}, we may choose the
generator $\pi_D$ such that $\pi_D^d$ is contained in $S$ (and hence
is a generator $\pi_K$ of the maximal ideal $\mathfrak{m}_K \subset
S$) and such that the inner automorphism $x\mapsto \pi_D^{\phantom{-}}x\pi_D^{-1}$
of $D$ restricts to an automorphism $\sigma$
of $L/K$ which generates $\Gal(L/K)$. The map
$$\begin{xy}
(-12,0)*+{ \Gal(L/K) }="1";
(12,0)*+{ \Gal(k_T/k_S) }="2";
{ \ar^-{\epsilon} "2";"1";};
\end{xy}$$
defined by $\epsilon(g)(y+\mathfrak{m}_T)=g(y)+\mathfrak{m}_T$
is an isomorphism, since $L/K$ is unramified. It maps the generator
$\sigma$ of the domain to a generator of the target, which we may
therefore write as the $r$th power of the Frobenius automorphism, for a unique
integer $0 < r < d$ relatively prime to $d$. The class of
$r/d$ in $\Q/\Z$ is called the Hasse invariant of $D$.  It determines the
central division $K$-algebra $D$,
up to non-canonical isomorphism.  Moreover, every element of $\Q/\Z$
occurs as the index of some central
division $K$-algebra $D$.

Let $k$ be a commutative ring, let $R$ be a commutative $k$-algebra,
and let $\varphi \colon R \to R$ be a $k$-algebra automorphism. The
twisted polynomial algebra $R^{\varphi}\{x\}$ is the quotient of the
coproduct $R *_k k[x]$, in the category of unital associative $k$-algebras, of $R$
and $k[x]$ by the two-sided ideal generated by the family of elements
$\varphi(a) x - xa$ with $a \in R$. We let
$i_1 \colon R \to R^{\varphi}\{x\}$ and $i_2 \colon k[x] \to
R^{\varphi}\{x\}$ be the two $k$-algebra homomorphisms defined as the
compositions of the respective canonical inclusions into the coproduct
followed by the canonical projection. 

\begin{lemma}\label{lem:basechange}In the situation above, if $k'$ is a
commutative $k$-algebra and $R'=R\otimes _k k'$, then there is a unique
isomorphism of $k'$-algebras
\vspace{-1mm}
$$\begin{xy}
(0,0)*+{ (R' )^{\sigma\otimes \id}\{ x\} }="1";
(29,0)*+{ R^\sigma \{ x\} \otimes_k k' }="2";
{ \ar^-{u} "2";"1";};
\end{xy}$$
compatible with the maps $i_1$ and $i_2$ over $k$ and $k'$,
respectively.
\end{lemma}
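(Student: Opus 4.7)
The plan is to apply the universal property of the twisted polynomial algebra. For any $k$-algebra $B$, a $k$-algebra homomorphism $R^\sigma\{x\} \to B$ corresponds to a pair $(f,\xi)$ consisting of a $k$-algebra homomorphism $f \colon R \to B$ and an element $\xi \in B$ satisfying $\xi \cdot f(a) = f(\sigma(a)) \cdot \xi$ for every $a \in R$; the correspondence sends $h$ to $(h \circ i_1, h(i_2(x)))$. An analogous universal property characterizes $(R')^{\sigma \otimes \id}\{x\}$ as a $k'$-algebra. I would then produce the desired isomorphism by supplying mutually inverse maps in each direction via the appropriate universal data.

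To construct $u$, I would use that a $k'$-algebra map $R^\sigma\{x\} \otimes_k k' \to (R')^{\sigma \otimes \id}\{x\}$ is the same datum as a $k$-algebra map $R^\sigma\{x\} \to (R')^{\sigma \otimes \id}\{x\}$, and then apply the universal property of $R^\sigma\{x\}$ to the pair $f(a) = i_1(a \otimes 1)$, $\xi = i_2(x)$. The twist relation is immediate from the defining relation in $(R')^{\sigma \otimes \id}\{x\}$ and the fact that $(\sigma \otimes \id)(a \otimes 1) = \sigma(a) \otimes 1$. For the inverse $v$, I would apply the universal property of $(R')^{\sigma \otimes \id}\{x\}$ as a $k'$-algebra to the pair $f' = i_1 \otimes \id_{k'}$, $\xi' = i_2(x) \otimes 1$, verifying the twist relation directly. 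Both $u$ and $v$ are compatible with $i_1$ and $i_2$ over $k$ and $k'$ by construction.

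To conclude, I would check that $u \circ v$ and $v \circ u$ act as the identity on the images of $i_1$ and $i_2$ and on the scalar subring, which suffices because those images generate; the uniqueness clause of the universal property then forces both composites to be the identity. The same uniqueness argument establishes the uniqueness statement for $u$ in the lemma. There is no real obstacle here; the proof is purely formal once the universal property is in place, and the only bookkeeping point is that $\sigma \otimes \id$ acts only on the $R$-factor of $R' = R \otimes_k k'$, so the twist relations transport cleanly between the two presentations.
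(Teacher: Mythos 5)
Your proposal is correct and the argument is sound, but the route differs slightly from the paper's. The paper first produces a canonical isomorphism of $k'$-algebras $R'*_{k'}k'[x] \cong (R*_kk[x])\otimes_kk'$ via the universal property of coproducts (and of extension of scalars for the inverse), and then checks that this isomorphism carries the kernel of the projection $R'*_{k'}k'[x]\twoheadrightarrow(R')^{\sigma\otimes\id}\{x\}$ to the kernel of $(R*_kk[x])\otimes_kk'\twoheadrightarrow R^{\sigma}\{x\}\otimes_kk'$, so descends to the quotients. You instead package the quotient into a universal property of $R^{\sigma}\{x\}$ itself — maps out of it correspond to pairs $(f,\xi)$ with $\xi f(a)=f(\sigma(a))\xi$ — and build $u$ and its inverse directly from this, verifying the twist relation each time and checking the composites on the generating subsets $i_1(R)$, $i_2(k[x])$, and the scalars. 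The two are formally equivalent; yours sidesteps the explicit kernel comparison at the cost of stating the universal property of the twisted polynomial algebra, which the paper leaves implicit. Either works, and both deliver uniqueness from the same universal-property considerations.
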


\begin{proof}The universal property of coproducts gives a
canonical map of $k'$-algebras
\vspace{-1mm}
$$\begin{xy}
(0,0)*+{ R' *_{k'} k'[x] }="1";
(29,0)*+{ (R *_k k[x])\otimes_k k', }="2";
{ \ar "2";"1";};
\end{xy}$$
the universal property of extension of scalars gives a
canonical map of $k$-modules
\vspace{-1mm}
$$\begin{xy}
(0,0)*+{ (R *_k k[x])\otimes_k k' }="1";
(29,0)*+{ R' *_{k'} k'[x], }="2";
{ \ar "2";"1";};
\end{xy}$$
and the two maps are mutually inverse. Moreover, the kernels of the
canonical projections to $(R' )^{\sigma\otimes \id}\{ x\}$ and
$R^\sigma \{ x\} \otimes_k k' $, respectively, are identified under
these isomorphisms. 
\end{proof}

\begin{lemma}\label{lem:twisty}With notation as above, let $v$ be the unique
$S$-algebra homomorphism
\vspace{-1mm}
$$\begin{xy}
(0,0)*+{ T^\sigma\{x\} }="1";
(19,0)*+{ A }="2";
{ \ar^-{v} "2";"1";};
\end{xy}$$
such that $v \circ i_1 \colon T \to A$ is the canonical inclusion and
$v \circ i_2 \colon S[x] \to A$ is the unique $S$-algebra homomorphism
mapping $x$ to $\pi_D$. Then $v$ is surjective, and its kernel is the
two-sided ideal generated by $x^d-\pi_K$.
\end{lemma}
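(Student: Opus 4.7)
The plan is to reduce both the surjectivity of $v$ and the computation of its kernel to a single statement: that $v$ descends to an isomorphism from $Q := T^{\sigma}\{x\}/(x^d - \pi_K)$ to $A$. First I would put $T^{\sigma}\{x\}$ in left-normal form by iterating the defining relation $\sigma(a)x = xa$, which shows $T^{\sigma}\{x\}$ is a free left $T$-module on $\{x^i\}_{i\geq 0}$ and, more generally, that $x^i a = \sigma^i(a) x^i$ for every $a \in T$ and every $i \geq 0$. Since $\sigma^d = \id_T$, this identity implies that $x^d$ centralizes $T$ and hence is central in $T^{\sigma}\{x\}$; together with the centrality of $\pi_K \in S$, this gives that the two-sided ideal $(x^d - \pi_K)$ agrees with the left ideal $T^{\sigma}\{x\}(x^d - \pi_K)$. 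Consequently $Q$ is a free left $T$-module on the classes of $1, x, \dots, x^{d-1}$, and $v(x^d - \pi_K) = \pi_D^d - \pi_K = 0$ guarantees that $v$ factors through a $T$-linear map $\bar v \colon Q \to A$ sending the basis of $Q$ to the tuple $(1, \pi_D, \dots, \pi_D^{d-1})$.

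Thus everything comes down to showing that this tuple is a left $T$-basis of $A$; once this is known, $\bar v$ is an isomorphism and the lemma follows immediately. The paper already records that $(1, \pi_D, \dots, \pi_D^{d-1})$ is an $L$-basis of $D$, so for $a \in A \subset D$ there is a unique expression $a = \sum_{i=0}^{d-1} \ell_i \pi_D^i$ with $\ell_i \in L$; the task is to promote each $\ell_i$ from $L$ to $T$.

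The main technical step is this promotion, and I would carry it out using the extended valuation $v_D$. Because $v_D(\pi_D) = 1/d$, the summands $\ell_i \pi_D^i$ have pairwise distinct valuations modulo $\Z$, so no cancellation can occur and
$$v_D(a) = \min_{0 \leq i < d} \bigl(v_L(\ell_i) + i/d\bigr).$$
Then $a \in A$ means $v_D(a) \geq 0$, which forces $v_L(\ell_i) \geq -i/d$ for each $i$, and since $v_L$ is integer-valued and $0 \leq i/d < 1$, we conclude $v_L(\ell_i) \geq 0$, i.e., $\ell_i \in T$. The potential obstacle is ensuring the valuations of the $\ell_i \pi_D^i$ are genuinely distinct, but this is exactly the content of $L/K$ being unramified of degree $d$ and $\pi_D$ being a uniformizer of $D$ with $\pi_D^d = \pi_K$, both of which are in force. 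This identifies $A$ with the free $T$-module on $(1, \pi_D, \dots, \pi_D^{d-1})$, shows $\bar v$ is an isomorphism, and hence proves the lemma.
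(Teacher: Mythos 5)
Your proof is correct and follows essentially the same route as the paper's: the valuation argument (pairwise distinct valuations of the summands $\ell_i\pi_D^i$, forcing $\ell_i \in T$) to show that $(1,\pi_D,\dots,\pi_D^{d-1})$ is a left $T$-basis of $A$, and the centrality of $x^d - \pi_K$ to control the kernel. The only cosmetic difference is that the paper invokes the right division algorithm to conclude directly that the kernel consists of left multiples of $x^d - \pi_K$, whereas you instead observe that $T^\sigma\{x\}/(x^d-\pi_K)$ is a free left $T$-module on $1,x,\dots,x^{d-1}$ and that $\bar v$ carries this basis to a basis; the two formulations encode the same computation.
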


\begin{proof}To prove surjectivity, recall that every element
$a \in D$ can be written uniquely as an $L$-linear combination 
$a = y_0 + \dots + y_{d-1}\pi_D^{d-1}$. Now, since $v_D(L^\times) = d\Z$,
the values $v_D(y_i\pi_D^i)$ are pairwise distinct. Therefore, we
conclude from the ultrametric inequality that $a \in A$ if and only if
$y_0,\dots,y_{d-1} \in T$ as desired.  Clearly, $x^d - \pi_K$ lies in
the kernel of $v$, and by the linear independence of
$(1,\pi_D, \dots, \pi_D^{d-1})$ over $L$, no polynomial of lower
degree does so. Therefore, by the right division algorithm, every
element in the kernel of $v$ is a left multiple of the (central)
element $x^d-\pi_K$.
\end{proof}

\begin{corollary}\label{lem:moretwisty}The unique $T$-algebra
homomorphism
\vspace{-1mm}
$$\begin{xy}
(0,0)*+{ (T \otimes_ST)^{\sigma \otimes \id}\{x\} }="1";
(31,0)*+{ A\otimes_S T }="2";
{ \ar^-{v'} "2";"1";};
\end{xy}$$
such that
$v' \circ (i_1\otimes \id) \colon T \otimes_ST \to A \otimes_ST$ is the
canonical inclusion and such that
$v' \circ( i_2\otimes\id) \colon T[x] \to A \otimes_ST$ the unique
$T$-algebra homomorphism that maps $x$ to $\pi_D \otimes 1$
is surjective, and its kernel is the two sided ideal $(x^d-\pi_K\otimes 1)$.
\end{corollary}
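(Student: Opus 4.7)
The plan is to deduce this corollary from Lemma~\ref{lem:twisty} by base change along the inclusion $f \colon S \to T$, using Lemma~\ref{lem:basechange} to identify the twisted polynomial algebra after base change.

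First, I would apply Lemma~\ref{lem:basechange} with $k = S$, $k' = T$, and $R = T$ equipped with the automorphism $\sigma$. This yields a canonical isomorphism of $T$-algebras
$$u \colon (T \otimes_ST)^{\sigma \otimes \id}\{x\} \,\xrightarrow{\ \simeq\ }\, T^\sigma\{x\} \otimes_ST$$
that is compatible with the maps $i_1$ and $i_2$. Composing $u$ with the map $v \otimes_S \id_T \colon T^\sigma\{x\} \otimes_ST \to A \otimes_ST$ obtained from Lemma~\ref{lem:twisty} gives a $T$-algebra homomorphism whose precompositions with $i_1 \otimes \id$ and $i_2 \otimes \id$ are, respectively, the canonical inclusion $T \otimes_ST \to A \otimes_ST$ and the $T$-algebra map sending $x$ to $\pi_D \otimes 1$. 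By uniqueness, this composite agrees with $v'$.

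Next, since $L/K$ is a finite unramified extension, $T$ is a finitely generated free $S$-module, so the functor $(-) \otimes_ST$ is exact. Hence Lemma~\ref{lem:twisty}, together with the identification above, shows that $v'$ is surjective and that its kernel is the image of the two-sided ideal $(x^d - \pi_K) \subset T^\sigma\{x\}$ under $(-) \otimes_ST$, transported along $u^{-1}$.

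Finally, to identify this kernel with the two-sided ideal generated by $x^d - \pi_K \otimes 1$, observe that because $\sigma$ has order $d$, the element $x^d$ is central in $T^\sigma\{x\}$, and likewise $x^d$ is central in $(T \otimes_ST)^{\sigma \otimes \id}\{x\}$. Hence $x^d - \pi_K$ and $x^d - \pi_K \otimes 1$ are central in their respective algebras, so the two-sided ideals they generate coincide with the corresponding left ideals. A free $S$-module basis of $T$ then exhibits the left ideal $(x^d - \pi_K) T^\sigma\{x\}$ tensored with $T$ over $S$ as the left ideal $(x^d - \pi_K \otimes 1)$ in $(T \otimes_S T)^{\sigma \otimes \id}\{x\}$, completing the argument. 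The only subtlety I anticipate is the bookkeeping to pass cleanly from the two-sided ideal in the kernel of $v$ to the two-sided ideal in the kernel of $v'$, which is handled by the centrality of $x^d$.
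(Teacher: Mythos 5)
Your proposal follows the same route as the paper's own proof: factor $v'$ through the isomorphism $u$ of Lemma~\ref{lem:basechange} (with $k=S$, $k'=T$, $R=T$) followed by $v\otimes_S\id_T$, and then push the surjectivity and kernel identification of Lemma~\ref{lem:twisty} across the flat base change $(-)\otimes_ST$. The paper states this in one sentence (its printed references to ``Lemma~\ref{lem:twisty}'' and ``Lemma~\ref{lem:moretwisty}'' in that sentence appear to be swapped and should read ``Lemma~\ref{lem:basechange}'' and ``Lemma~\ref{lem:twisty}''), whereas you spell out the kernel bookkeeping: flatness of $T$ over $S$ gives $\ker(v\otimes\id)=\ker(v)\otimes_ST$, and the centrality of $x^d$ (since $\sigma\otimes\id$ has order $d$) lets you pass from the two-sided ideal $(x^d-\pi_K)$ in $T^\sigma\{x\}$ to the two-sided ideal $(x^d-\pi_K\otimes 1)$ in $(T\otimes_ST)^{\sigma\otimes\id}\{x\}$ via $u^{-1}$. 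This is correct and essentially the paper's argument made explicit.
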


\begin{proof}The map $v'$ factors as the composition
\vspace{-1mm}
$$\begin{xy}
(0,0)*+{ (T \otimes_ST)^{\sigma \otimes \id}\{x\} }="1";
(33,0)*+{ T^{\sigma}\{x\} \otimes_S T }="2";
(61,0)*+{ A\otimes_S T }="3";
{ \ar^-{u} "2";"1";};
{ \ar^-{v \otimes \id} "3";"2";};
\end{xy}$$
of the isomorphism in Lemma~\ref{lem:twisty} and the extension of scalars
along $f \colon S \to T$ of the isomorphism in
Lemma~\ref{lem:moretwisty}. 
\end{proof}

We define a category $\mathsf{Mod}_{\,T,G}$ as follows. An object is
a triple $(N,(N_g)_{g \in G},\varphi)$, where $N$ is a left
$T$-module, $(N_g)_{g \in G}$ is a grading on $N$ of type $G = \Gal(L/K)$,  
and $\varphi \colon N \to N$ is a graded $T$-linear endomorphism of
degree $\sigma^{-1} \in G$ such that $\varphi^{d}$ is
equal to left multiplication by $\pi_K \in T$. A morphism
$h \colon (N,(N_g),\varphi) \to (N',(N_g'),\varphi')$ is a
graded $T$-linear map $h \colon N \to N'$ of degree $1 \in G$ such
that $h \circ \varphi = \varphi' \circ h$. We write the composition
law in $G$ multiplicatively and refer 
to~\cite[Chapter~2,~\S11]{bourbaki1} for the definitions of graded
modules and graded homomorphisms. We recall that since $f \colon S \to
T$ is faithfully \'{e}tale with Galois group $G$, the ring homomorphism
$$\begin{xy}
(-11,0)*+{ T \otimes_ST}="1";
(11,0)*+{ \prod_{g \in G}T }="2";
{ \ar^-{w} "2";"1";};
\end{xy}$$
with $g$th component 
$w_g(t_1 \otimes t_2) = g(t_1)t_2$ is an isomorphism. We let $(e_g)_{g \in G}$
be the family of orthogonal idempotents in $T \otimes_ST$ such that
$w_g(e_{g'}) = \delta_{g,g'}$ and let
$$\begin{xy}
(-12,0)*+{ \mathsf{Mod}_{A \otimes_ST} }="1";
(12,0)*+{ \mathsf{Mod}_{\,T,G} }="2";
{ \ar^-{F} "2";"1";};
\end{xy}$$
be the functor that to a left $A \otimes_ST$-module $N$ assigns the
triple $(N,(N_g)_{g \in G},\varphi)$, where, by abuse of notation, $N$
is the underlying left $T$-module of the left $A \otimes_ST$-module 
$N$, where $N_g \subset N$ is the $T$-submodule
$e_g \cdot N \subset N$, and where $\varphi \colon N \to N$ is the
$T$-linear map given by left multiplication by
$\pi_D \otimes 1 \in A \otimes_ST$.

\begin{proposition}\label{prop:gradedmodules}The functor
$$\begin{xy}
(-12,0)*+{ \mathsf{Mod}_{A \otimes_ST} }="1";
(12,0)*+{ \mathsf{Mod}_{\,T,G} }="2";
{ \ar^-{F} "2";"1";};
\end{xy}$$
is an equivalence of categories.
\end{proposition}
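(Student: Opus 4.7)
The plan is to unpack the ring $A \otimes_ST$ using Corollary~\ref{lem:moretwisty} and to decompose the resulting $T \otimes_ST$-module structure via the Galois isomorphism $w$. By Corollary~\ref{lem:moretwisty}, giving a left $A \otimes_ST$-module structure on an abelian group $N$ is the same as giving a left $T \otimes_ST$-module structure together with an additive endomorphism $\varphi \colon N \to N$ (corresponding to left multiplication by $x$, which $v'$ sends to $\pi_D \otimes 1$) satisfying the twist relation
$$\varphi(t \cdot n) = (\sigma \otimes \id)(t) \cdot \varphi(n) \qquad (t \in T \otimes_ST,\ n \in N)$$
together with the polynomial relation $\varphi^d(n) = (\pi_K \otimes 1) \cdot n$. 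This is simply the universal property of the twisted polynomial algebra modulo the central polynomial $x^d - \pi_K \otimes 1$.

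Next, I would use the isomorphism $w$ and the orthogonal idempotents $(e_g)_{g \in G}$ to identify left $T \otimes_ST$-modules with $G$-graded left $T$-modules, via $N = \bigoplus_g N_g$ with $N_g = e_g \cdot N$, where $T$ acts through the second factor $1 \otimes T \subset T \otimes_ST$. A short calculation from the definition $w_g(t_1 \otimes t_2) = g(t_1) t_2$ yields
$$w_g\bigl((\sigma \otimes \id)(e_{g'})\bigr) = w_{g\sigma}(e_{g'}) = \delta_{g\sigma, g'},$$
so that $(\sigma \otimes \id)(e_{g'}) = e_{g'\sigma^{-1}}$. Substituting $t = e_{g'}$ into the twist relation then gives $\varphi(N_{g'}) \subset N_{g'\sigma^{-1}}$, that is, $\varphi$ is a graded endomorphism of degree $\sigma^{-1}$; substituting $t = 1 \otimes t'$ gives $\varphi(t' \cdot n) = t' \cdot \varphi(n)$, so $\varphi$ is $T$-linear. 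The polynomial relation becomes $\varphi^d = \pi_K \cdot \id_N$. Morphisms translate in the obvious way: an $A \otimes_ST$-linear map is a $T \otimes_ST$-linear map commuting with multiplication by $\pi_D \otimes 1$, and hence corresponds to a degree-$1$ graded $T$-linear map commuting with $\varphi$.

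Finally, the quasi-inverse is constructed by reading these identifications backward: to a triple $(N,(N_g),\varphi)$ one assigns the $A \otimes_ST$-module whose underlying abelian group is $N$, in which $e_g$ acts as the projection onto $N_g$ and $\pi_D \otimes 1$ acts as $\varphi$. The two relations above are precisely what is required for this data to extend to an $A \otimes_ST$-module via Corollary~\ref{lem:moretwisty}, and both compositions of the two functors are tautologically the identity on objects and morphisms. The only substantive step is the idempotent computation $(\sigma \otimes \id)(e_{g'}) = e_{g'\sigma^{-1}}$, which is where the shift direction $\sigma^{-1}$ in the definition of $\mathsf{Mod}_{\,T,G}$ is forced; the rest of the argument is bookkeeping with definitions.
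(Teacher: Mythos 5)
Your proof is correct and follows essentially the same route as the paper's: both reduce to the description of $A\otimes_S T$ as the quotient $(T\otimes_S T)^{\sigma\otimes\id}\{x\}/(x^d-\pi_K\otimes 1)$ from Corollary~\ref{lem:moretwisty}, decompose $T\otimes_S T$-modules into $G$-gradings via the idempotents $(e_g)$, and verify the shift direction through the computation $(\sigma\otimes\id)(e_{g'})=e_{g'\circ\sigma^{-1}}$. The only stylistic difference is that you package the argument as a single universal-property observation rather than constructing $F$ and its quasi-inverse $H$ explicitly and checking them separately, which is a minor reorganization rather than a new idea.
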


\begin{proof}First, to prove that $F$ is well-defined, we must verify
that the $T$-linear map $\varphi \colon N \to N$ defined by
$\varphi(y) = \pi_D \otimes 1 \cdot y$ is indeed graded of 
degree $\sigma^{-1}$ and that $\varphi^d$ is given by multiplication
by $\pi_K$. The latter holds, since
$$\pi_D^d \otimes 1 = \pi_K \otimes 1 = 1 \otimes \pi_K,$$
and to prove the former we apply Corollary~\ref{lem:moretwisty} to
conclude that
$$\varphi((t_1 \otimes t_2)y) = (\sigma(t_1) \otimes t_2)\varphi(y),$$
for all $y \in N$ and $t_1 \otimes t_2 \in T \otimes_ST$. Moreover,
the commutative diagram
$$\begin{xy}
(-11,7)*+{ T \otimes_ST}="11";
(11,7)*+{ \prod_{g \in G}T\phantom{,} }="12";
(-11,-7)*+{ T \otimes_ST}="21";
(11,-7)*+{ \prod_{g \in G}T, }="22";
{ \ar^-{w} "12";"11";};
{ \ar^-{w} "22";"21";};
{ \ar^-{\sigma \otimes \id} "21";"11";};
{ \ar^-{(\sigma \otimes \id)^w} "22";"12";};
\end{xy}$$
where the map $(\sigma \otimes \id)^w$ is defined by
$\pr_g \circ (\sigma \otimes \id)^w = \pr_{g \circ \sigma}$,
shows that
$$(\sigma \otimes \id)(e_g) = e_{g \circ \sigma^{-1}}.$$
This shows that $\varphi \colon N \to N$ is graded of degree
$\sigma^{-1}$ as desired.

We define a quasi-inverse functor
$$\begin{xy}
(-12,0)*+{ \mathsf{Mod}_{\,T,G} }="1";
(12,0)*+{ \mathsf{Mod}_{A \otimes_ST} }="2";
{ \ar^-{H} "2";"1";};
\end{xy}$$
as follows. Given an object $(N,(N_g)_{g \in G},\varphi)$ of the
domain category, we first use the grading $(N_g)_{g \in G}$ to define
a left $T \otimes_ST$-module structure on the left $T$-module $N$ by
letting $t_1 \otimes t_2 \in T \otimes_ST$ multiply by
$w_g(t_1 \otimes t_2)$ on $N_g \subset N$. Moreover, since the
$T$-linear map $\varphi \colon N \to N$ is graded of degree
$\sigma^{-1}$, the argument above shows that
$$\varphi((t_1 \otimes t_2)y) = (\sigma(t_1) \otimes t_2)\varphi(y),$$
for all $y \in N$ and $t_1 \otimes t_2 \in T \otimes_ST$. Since, in
addition, $\varphi^d$ is given by multiplication by $\pi_K$, this
defines a $(T \otimes_ST)^{\sigma \otimes   \id}\{x\} / (x^d-\pi_K
\otimes 1)$-module structure on $N$, where the left multiplication by
$x$ is given by the map $\varphi \colon N \to N$, and by
Corollary~\ref{lem:moretwisty}, this defines a left $A \otimes_ST$-module
structure on $N$. This defines the functor $H$, and it is clear that
$F \circ H$ and $H \circ F$ are equal to the respective identity
functors.
\end{proof}

\begin{example}\label{ex:sizeone}Right multiplication on $A \otimes_ST$
by the idempotents $(e_h)_{h \in G}$ defined in the proof of
Proposition~\ref{prop:gradedmodules} gives rise to  
a direct sum decomposition
$$A \otimes_ST = \bigoplus_{h \in G} \, A \otimes_ST \cdot e_h$$
as left $A \otimes_ST$-modules. Hence, as a left $A
\otimes_ST$-module, each of the $d$ summands is projective. We now
evaluate $F(A \otimes_ST \cdot e_h)$.  By
Corollary~\ref{lem:moretwisty}, 
$$(\pi_D\otimes 1) \cdot e_h = (\sigma
\otimes  1)(e_h) \cdot (\pi_D\otimes 1) = e_{h \circ \sigma^{-1}} \cdot (\pi_D\otimes 1),$$
so as a left $T$-submodule of $A \otimes_ST$,
$$e_g \cdot A \otimes_ST \cdot e_h = T \cdot (\pi_D^i\otimes 1),$$
where $0 \leqslant i < d$ is the unique integer such that $g = h \circ
\sigma^{-i}$. Hence, the map
$$\begin{xy}
(-17,0)*+{ e_g \cdot A \otimes_ST \cdot e_h }="1";
(17,0)*+{ e_{g \circ \sigma^{-1}} \cdot A \otimes_ST \cdot e_h }="2";
{ \ar^-{\varphi} "2";"1";};
\end{xy}$$
is an isomorphism, except for $g = h \circ \sigma$, where it
is injective with cokernel $k_T \cdot 1$.
\end{example}

Let $\mathcal{M}_{\,T,G}$ and $\mathcal{P}_{\,T,G}$ be the full
subcategories of $\mathsf{Mod}_{\,T,G}$ whose objects are the triples
$(N,(N_g),\varphi)$ such that the left $T$-module $N$ is finitely
generated and finitely generated and projective, respectively.

\begin{addendum}\label{add:gradedmodulesprojectives}The equivalence
$F \colon \mathsf{Mod}_{A \otimes_ST} \to \mathsf{Mod}_{\,T,G}$
restricts to equivalences
$F \colon \mathcal{M}_{A \otimes_ST} \to \mathcal{M}_{\,T,G}$ and
$F \colon \mathcal{P}_{A \otimes_ST} \to \mathcal{P}_{\,T,G}$.
\end{addendum}

\begin{proof}The $T$-algebra $A \otimes_ST$ is finitely generated and
projective as a $T$-module. So the former statement follows
immediately and to prove the latter, we must show that full
subcategory $\mathcal{P}_{\,T,G}$ of $\mathcal{M}_{\,T,G}$ is
precisely that consisting of the projective objects. Every projective
object in $\mathcal{M}_{\,T,G}$ is an object in $\mathcal{P}_{\,T,G}$,
since the underlying $T$-module of a projective left $A
\otimes_ST$-module is projective. To prove the 
converse, let $(P,(P_g),\varphi)$ be an object of
$\mathcal{P}_{\,T,G}$. The finitely generated projective
$T$-modules $P_g$ all have the same rank $r$. Indeed, the $T$-linear
map $\varphi^d \colon P_g \to P_g$ becomes an isomorphism
after extending scalars along $T \to L$, and hence, so does
$\varphi \colon P_g \to P_{g \circ \sigma^{-1}}$. Here and below we
use that $\sigma \in G$ is a generator. We call the common
rank $r$ the size of $(P,(P_g),\varphi)$ and proceed to show by
induction on $r$ that $(P,(P_g),\varphi)$ is a projective object in
$\mathcal{M}_{\,T,G}$, the case $r = 0$ being trivial.

First, if $r = 1$, then the $T$-modules $P_g/\pi_KP_g$ all have
length $1$. It follows that the maps
$\varphi \colon P_g \to P_{g \circ \sigma^{-1}}$ all are isomorphisms,
except for a single $g = h \circ \sigma$ for which it is injective
with cokernel of length $1$. We conclude from Example~\ref{ex:sizeone}
that $(P,(P_g),\varphi)$ is isomorphic to the object
$F(A \otimes_ST \cdot e_h)$, hence projective. 

To prove the induction step, we let $(P,(P_g),\varphi)$ have size $r >
1$ and assume that all objects of smaller size are projective. We will
construct a sequence in $\mathcal{P}_{\,T,G}$, 
$$\xymatrix{
{ 0 } \ar[r] &
{ (P',(P_g{\smash\!\!'}),\varphi') } \ar[r]^-{j} &
{ (P,(P_g),\varphi) } \ar[r]^-{q} &
{ (P'',(P_g''),\varphi'') } \ar[r] &
{ 0, } \cr
}$$
which is exact in the abelian category $\mathcal{M}_{\,T,G}$ and
in which the left-hand term has size $1$. Inductively, the left-hand
term and the right-hand term, which has size $r-1$, both are
projective, and hence, the sequence will show that also the middle term is
projective. To construct the desired sequence, we choose any 
non-zero element $x_1 \in P_{\,1}$ and set $x_g = \varphi^i(x_1) \in P_g$
if $g = \sigma^{-i}$ with $0 \leqslant i < d$. We then define
$j_g \colon P_g{\smash\!\!'} \to P_g$ by means of the pullback square
of $T$-modules
$$\xymatrix{
{ P_g{\smash\!\!'} } \ar[r]^-{j_g} \ar[d] &
{ P_g } \ar[d] \cr
{ L \cdot x_g } \ar[r] &
{ P_g \otimes_TL, } \cr
}$$
where all maps are the canonical inclusions, and define $q_g \colon
P_g \to P_g{\smash\!\!''}$ to be a cokernel of $j_g$. The $T$-modules
$P_g{\smash\!\!'}$ and $P_g{\smash\!\!''}$ are finitely generated
of rank $1$ and $r-1$, respectively, and both are torsion-free, and hence,
projective. Here, to see that $P_g{\smash\!\!''}$ is torsion-free, we
use that if $x \in P_g$ and $px \in P_g{\smash\!\!'}$, then $x \in
P_g{\smash\!\!'}$. We define $j \colon P' \to P$ and $q \colon P \to
P''$ to be the respective sums indexed by $g \in G$ of
$j_g \colon P_g{\smash\!\!'} \to P_g$ and $q_g \colon P_g \to
P_g{\smash\!\!''}$. The map $\varphi \colon P \to P$ induces maps
$\varphi' \colon P' \to P'$ and $\varphi'' \colon P'' \to
P''$. Moreover, since $\varphi$ is graded $T$-linear of degree
$\sigma^{-1}$ with $\varphi^d$ given by multiplication by $\pi_K \in
T$, the same is true for the maps $\varphi'$ and $\varphi''$. This
completes the proof.
\end{proof}

\begin{corollary}\label{cor:restriction-projective}Let $M$ be an
$A \otimes_ST$-module. If the $T \otimes_ST$-module
obtained from $M$ by restriction of scalars along
$T \otimes_ST \to A \otimes_ST$ is finitely generated and projective,
then $M$ is finitely generated and projective.
\end{corollary}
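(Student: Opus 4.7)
The plan is to deduce this directly from the equivalence $F$ of Proposition~\ref{prop:gradedmodules} together with Addendum~\ref{add:gradedmodulesprojectives}. Writing $F(M) = (N,(N_g)_{g \in G},\varphi)$ with $N_g = e_g \cdot M$, the content of the corollary will reduce to showing that this triple lies in $\mathcal{P}_{T,G}$, i.e., that each $N_g$ is finitely generated and projective as a left $T$-module.

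The first step is to identify what finite generation and projectivity over $T \otimes_S T$ mean in the graded picture. The isomorphism $w \colon T \otimes_S T \xrightarrow{\sim} \prod_{g \in G} T$ used in the construction of $F$ identifies the category of left $T \otimes_S T$-modules with the category of $G$-graded left $T$-modules, sending a $T \otimes_S T$-module $M$ to the family $(e_g \cdot M)_{g \in G}$. Since a module over a finite product of rings is finitely generated and projective if and only if each of its components is so over the corresponding factor, the hypothesis that the restriction of $M$ along $T \otimes_S T \to A \otimes_S T$ lies in $\mathcal{P}_{T \otimes_S T}$ translates into the statement that each $N_g$ lies in $\mathcal{P}_T$.

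Combining these observations yields $F(M) \in \mathcal{P}_{T,G}$, and Addendum~\ref{add:gradedmodulesprojectives} then produces $M \in \mathcal{P}_{A \otimes_S T}$, which is what we want. There is no substantive obstacle here: the real work, namely the induction on the common rank of the graded pieces, has already been carried out in Addendum~\ref{add:gradedmodulesprojectives}, and the only additional input is the elementary translation between modules over $\prod_{g \in G} T$ and $G$-graded $T$-modules supplied by the idempotent decomposition coming from $w$.
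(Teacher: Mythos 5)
Your proof is correct and takes essentially the same approach as the paper: both reduce the statement to showing $F(M) \in \mathcal{P}_{\,T,G}$ and then invoke Addendum~\ref{add:gradedmodulesprojectives}. The only cosmetic difference is that the paper restricts further along $T \to T \otimes_S T$ and notes that the underlying $T$-module is finitely generated projective, whereas you use the idempotent decomposition of $T \otimes_S T$ into $\prod_{g\in G} T$ to read off that each $N_g$ is in $\mathcal{P}_T$; these are two ways of saying the same thing.
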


\begin{proof}Since restriction of scalars along $T \to T\otimes_ST$
takes $\mathcal{P}_{\,T\otimes_ST}$ to $\mathcal{P}_{\,T}$, it
suffices to prove that if the restriction of an $A \otimes_ST$-module
$M$ along $T \to A \otimes_ST$ is finitely generated projective, then
so is $M$. Applying $F$, this follows from the definition of
$\mathcal{P}_{\,T,G}$ and from
Addendum~\ref{add:gradedmodulesprojectives}.
\end{proof}

\begin{proof}[Proof of Theorem~\ref{thm:theoremC}]We claim that the
stronger statement that all submodules of a finitely generated
projective left $A \otimes_ST$-module again are finitely generated
projective holds. The analogous statement holds for the discrete
valuation ring $T$. Hence, the claim follows from
Addendum~\ref{add:gradedmodulesprojectives} and from $A \otimes_ST$ being
noetherian.
\end{proof}

We next identify the adjoint functors
$$\begin{xy}
(0,0)*+{ \mathcal{P}_{A \otimes_ST} }="1";
(25,0)*+{ \mathcal{P}_{\,T} }="2";
{ \ar@<.7ex>^-{\operatorname{Tr}_{A \otimes_ST/T}} "2";"1";};
{ \ar@<.7ex>^-{I_{A \otimes_ST/T}} "1";"2";};
\end{xy}$$
defined to be the restriction and coextension along
$T \to A \otimes_ST$ under the equivalence of
Addendum~\ref{add:gradedmodulesprojectives}. Given $g \in G$, we define
adjoint functors
$$\begin{xy}
(0,0)*+{ \mathcal{P}_{\,T,G} }="1";
(20,0)*+{ \mathcal{P}_{\,T} }="2";
{ \ar@<.7ex>^-{\smash{\operatorname{deg}}_{\hskip.5pt g}} "2";"1";};
{ \ar@<.7ex>^-{\operatorname{ind}_{\hskip.5pt g}} "1";"2";};
\end{xy}$$
by $\operatorname{deg}_g(P,(P_{\,h})_{h \in G},\varphi) = P_g$ and
$\operatorname{ind}_g(Q) = (\bigoplus_{h \in G}Q, (Q)_{h \in G},
\psi)$, where $\psi$ takes the summand indexed by $h \in G$ to the one
indexed by $\smash{ h \circ \sigma^{-1} \in G }$ by the map
$\pi_K \cdot \id_Q$, if $h = g$, and by the identity map,
otherwise. We define the adjunction isomorphism
$$\xymatrix{
{ \Hom_{T}(\operatorname{deg}_g(P,(P_{\,h})_{h \in G},\varphi), Q) }
\ar[r]^-{\alpha} &
{ \Hom_{A \otimes_ST}((P,(P_{\,h})_{h \in G},\varphi),
  \operatorname{ind}_g(Q)) } \cr
}$$
by $\alpha(f)_h = f \circ \varphi^{\,i} \colon P_{\, h} \to Q$, where
$g = h \circ \sigma^{-i}$ with $0 \leq i < d$. 

\begin{lemma}\label{lem:reducedtrace}In the situation above, the following hold:
\begin{enumerate}
\item[{\rm (i)}]The map induced by the canonical inclusions,
$$\xymatrix{
{ \bigoplus_{g \in G} (\deg_g \circ \,F)(P) } \ar[r] &
{ \Tr_{A \otimes_ST/T}(P), } \cr
}$$
is a natural isomorphism of left $T$-modules.
\item[{\rm (ii)}]Writing $g \in G$ as $g = \sigma^{\,-i}$ with
$0 \leq i < d$, the $T$-linear map
$$\begin{xy}
(0,0)*+{ (\deg_1 \circ \,F)(P) }="1";
(34,0)*+{ (\deg_{\hskip.5pt g} \circ\, F)(P) }="2";
{ \ar^-{(\pi_D \otimes 1)^i} "2";"1";};
\end{xy}$$
is natural and becomes an isomorphism after extension of scalars along
$T \to L$. 
\item[{\rm (iii)}]The multiplication
$\delta \colon T \otimes_ST \to T$ induces a natural isomorphism
$$\xymatrix{
{ (\operatorname{deg}_1 \circ\,F)(P) } \ar[r] &
{ \operatorname{Trd}_{A \otimes_ST/T}(P). } \cr
}$$
\end{enumerate}
\end{lemma}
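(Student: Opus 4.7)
The plan is to handle each part directly from the equivalence $F$ of Proposition~\ref{prop:gradedmodules}, using the explicit description of the orthogonal idempotents $(e_g)_{g \in G} \subset T \otimes_S T$ characterized by $w_g(e_{g'}) = \delta_{g,g'}$. For part~(i), the functor $\Tr_{A \otimes_S T/T}$ is restriction of scalars, so $\Tr_{A \otimes_S T/T}(P)$ is simply the underlying $T$-module of $P$; the idempotents $(e_g)_{g \in G}$ then yield the internal direct sum decomposition $P = \bigoplus_{g \in G} e_g P = \bigoplus_{g \in G} P_g = \bigoplus_{g \in G} (\deg_g \circ F)(P)$, and naturality is automatic since $A \otimes_S T$-linear maps are in particular $T \otimes_S T$-linear. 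For part~(ii), the proof of Proposition~\ref{prop:gradedmodules} shows that $\varphi$ is left multiplication by $\pi_D \otimes 1$ and is graded of degree $\sigma^{-1}$, so its $i$-fold iterate is multiplication by $(\pi_D \otimes 1)^i$ and carries $P_1$ to $P_{\sigma^{-i}} = P_g$; since $\varphi^d$ equals multiplication by $\pi_K$ and $\pi_K$ is a unit in $L$, the map $\varphi$, and hence $\varphi^i$, becomes invertible after extending scalars along $T \to L$.

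For part~(iii), recall from the introduction that $\Trd_{A \otimes_S T/T} = \delta^* \circ \pi_*$, so $\Trd_{A \otimes_S T/T}(P) = T \otimes_{T \otimes_S T} P$, where $T \otimes_S T$ acts on $P$ by restriction along $\pi$. The key observation is that under the isomorphism $w$, the multiplication $\delta$ corresponds to projection onto the $g = 1$ component, since
$$w_1(t_1 \otimes t_2) = 1(t_1) \cdot t_2 = t_1 t_2 = \delta(t_1 \otimes t_2).$$
Consequently, $\ker(\delta)$ is the principal ideal generated by $1 - e_1$, so $T \otimes_{T \otimes_S T} P = P/(1 - e_1)P$, which via the canonical projection $p \mapsto e_1 p$ is identified with $e_1 P = P_1 = (\deg_1 \circ F)(P)$. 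The map induced by $\delta$, namely $y \mapsto 1 \otimes y$, is the inverse of this identification, is $T$-linear by $1 \otimes (t \otimes 1) y = \delta(t \otimes 1) \otimes y = t \otimes y$, and is natural in $P$.

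The step requiring the most care is the identification $\delta = w_1$ in part~(iii), since this is what bridges the abstract adjunction defining $\Trd_{A \otimes_S T/T}$ and the concrete decomposition $w$ of $T \otimes_S T$. Once that identification is in hand, each assertion of the lemma reduces to a formal manipulation of the orthogonal idempotents $(e_g)_{g \in G}$ and routine properties of the tensor product.
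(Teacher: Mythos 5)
Your argument is correct and spells out precisely what the paper's proof means by ``follows immediately from the definitions'': part~(i) via the orthogonal idempotent decomposition, part~(ii) via the identification of $\varphi$ with multiplication by $\pi_D\otimes 1$ together with $\varphi^d = \pi_K$ becoming a unit over $L$, and part~(iii) via the observation that $\delta = w_1$, so that $T\otimes_{T\otimes_ST}P \cong P/(1-e_1)P \cong e_1P = P_1$. This is the same route the paper takes implicitly.
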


\begin{proof}This follows immediately from the definitions.
\end{proof}

\begin{remark}\label{rem:reducedtrace}By adjunction, the
statements~(i)--(iii) in Lemma~\ref{lem:reducedtrace} imply that there
is a natural isomorphism of left $A \otimes_ST$-modules
$$\xymatrix{
{ I_{A \otimes_ST/T}(Q) } \ar[r] &
{ \prod_{g \in G} (H \circ \operatorname{ind}_g)(Q); } \cr
}$$
that there is a natural $A \otimes_ST$-linear map
$$\xymatrix{
{ (H \circ \operatorname{ind}_g)(Q) } \ar[r] &
{ (H \circ \operatorname{ind}_1)(Q), } \cr
}$$
which becomes an isomorphism after extension of scalars along $A
\otimes_ST \to D \otimes_KL$; and that there is a natural isomorphism
of left $A\otimes_ST$-modules
$$\xymatrix{
{ \Ird_{A \otimes_ST}(Q) } \ar[r] &
{ (H \circ \operatorname{ind}_1)(Q). } \cr
}$$
\end{remark}

Finally, we compare the $T$-order $A \otimes_ST$ in the semisimple
$L$-algebra $D \otimes_KL$ to a maximal $T$-order. We recall
from~\cite[Theorem~7.15]{reiner} that, by viewing $D$ as a right
$L$-vector space, left multiplication by $D$ on itself defines
an $L$-algebra isomorphism
\vspace{-1mm}
$$\begin{xy}
(0,0)*+{ D \otimes_KL }="1";
(25,0)*+{ \End_L(D). }="2";
{ \ar^-{l} "2";"1";};
\end{xy}$$
It restricts to a $T$-algebra monomorphism from the $T$-order
$A \otimes_ST$ of the domain to the $T$-order $\End_T(A)$ of the
target, which by op.~cit., Theorem~8.7, is a maximal
$T$-order. We identify the $T$-algebra $\End_T(A)$ with the matrix
$T$-algebra $M_d(T)$ by means of the $T$-algebra isomorphism
$\End_T(A) \to M_d(T)$ that to an endomorphism of the right $T$-module
$A$ associates its matrix with respect to the basis
$(\pi_D^s)_{0 \leqslant s < d}$.

\begin{proposition}\label{prop:milnorsquare}With notation as above,
there is a cartesian square of $T$-algebras
\vspace{-1mm}
$$\begin{xy}
(-14,7)*+{ A \otimes_ST }="11";
(14,7)*+{ B_d(k_T) }="12";
(-14,-7)*+{ M_d(T) }="21";
(14,-7)*+{ M_d(k_T) }="22";
{ \ar^-{i'} "12";"11";};
{ \ar^-{l} "21";"11";};
{ \ar^-(.46){\bar{l}} "22";"12";};
{ \ar^-{M_d(i)} "22";"21";};
\end{xy}$$
in which the right-hand vertical map is the canonical inclusion
of the subalgebra of lower triangular matrices, and the map $i \colon
T \to k_T$ is the canonical projection.
\end{proposition}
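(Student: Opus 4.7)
The plan is to identify the image of $l \colon A \otimes_S T \to M_d(T)$ explicitly in the basis $(1,\pi_D,\dots,\pi_D^{d-1})$ of $A$ as a right $T$-module, and show that it equals the preimage under $M_d(i)$ of the lower triangular subalgebra $B_d(k_T) \subset M_d(k_T)$.

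First I will record the matrices of generators. From $\pi_D t = \sigma(t)\pi_D$ one deduces $t \pi_D^s = \pi_D^s \sigma^{-s}(t)$, so left multiplication by $t \in T$ sends $\pi_D^s \mapsto \pi_D^s \sigma^{-s}(t)$, giving $l(t \otimes 1) = \mathrm{diag}(t,\sigma^{-1}(t),\dots,\sigma^{-(d-1)}(t))$; left multiplication by $\pi_D$ sends $\pi_D^s \mapsto \pi_D^{s+1}$ for $s < d-1$ and $\pi_D^{d-1} \mapsto \pi_K$, so $l(\pi_D)$ has $1$'s on the first subdiagonal and $\pi_K$ in the upper-right corner; and $l(1 \otimes t) = t \cdot I$ acts by right multiplication. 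Modulo $\pi_T$, each of these matrices is lower triangular, and since they generate $A \otimes_S T$ as a $T$-algebra, the composite $M_d(i) \circ l$ factors through $B_d(k_T) \hookrightarrow M_d(k_T)$, defining $i'$ and establishing commutativity of the square.

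For the cartesian property, I will use the Galois isomorphism $w \colon T \otimes_S T \to \prod_{g \in G} T$ from earlier in the section to identify $l(T \otimes_S T)$ with the diagonal subalgebra $\Delta \subset M_d(T)$. Since $A$ is free as a left $T$-module on $(\pi_D^s)_{0 \leq s < d}$, the decomposition $A \otimes_S T = \bigoplus_{s=0}^{d-1} (T \otimes_S T) \cdot (\pi_D^s \otimes 1)$ gives $l(A \otimes_S T) = \sum_{s=0}^{d-1} \Delta \cdot l(\pi_D)^s$. A direct computation shows $l(\pi_D)^s$ has $1$'s on the $s$-th subdiagonal and $\pi_K$'s on the $(d-s)$-th superdiagonal, so multiplying on the left by a diagonal matrix produces an arbitrary $T$-entry on each position of the $s$-th subdiagonal and $\pi_K$ times an arbitrary $T$-entry on each position of the $(d-s)$-th superdiagonal.

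Summing over $s = 0,1,\dots,d-1$, the sub- and superdiagonals hit in this way are pairwise disjoint and together exhaust all positions of $M_d(T)$, so $l(A \otimes_S T)$ consists of precisely those matrices in $M_d(T)$ whose superdiagonal entries all lie in $\pi_K T = \pi_T T$; this is exactly the preimage of $B_d(k_T)$ under $M_d(i)$. Combined with the injectivity of $l$ noted in the paragraph preceding the statement, this identifies $A \otimes_S T$ with the fiber product $M_d(T) \times_{M_d(k_T)} B_d(k_T)$, which is the content of the proposition. The only step requiring genuine work is the combinatorial bookkeeping of sub- and superdiagonals, but this is routine; all conceptual content is packed into the identification of $l(T \otimes_S T)$ with $\Delta$ via the Galois isomorphism $w$.
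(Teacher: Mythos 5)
Your proof is correct and follows essentially the same approach as the paper's: both identify the image of $l$ as the left $H_d(T)$-span of the powers $l(\pi_D \otimes 1)^s$ for $0 \leqslant s < d$ and observe that this span is precisely the preimage of $B_d(k_T)$ under $M_d(i)$, i.e.\ the matrices lower triangular modulo $\mathfrak{m}_T$. You invoke the freeness of $A$ as a left $T$-module on $(\pi_D^s)_{0\leqslant s<d}$ where the paper cites Corollary~\ref{lem:moretwisty}, but these are equivalent inputs, and you spell out the sub- and superdiagonal bookkeeping a bit more explicitly; otherwise the argument is identical.
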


\begin{proof}The morphism $l$ maps the
$T$-subalgebra $T \otimes_ST \subset A \otimes_ST$ isomorphically
onto the $T$-subalgebra $H_d(T) \subset M_d(T)$ of diagonal
matrices, and
$$l(\pi_D \otimes 1) = 
\begin{pmatrix}
\, 0 \, & \, 0 \, & \, 0 \, & \cdots & \, 0 \, & \, \pi_K \, \cr
1 & 0 & 0 & \cdots & 0 & 0 \cr
0 & 1 & 0 & \cdots & 0 & 0 \cr
\vdots & \vdots & \vdots & \ddots & \vdots & \vdots \cr
0 & 0 & 0 & \cdots & 1 & 0 \cr
\end{pmatrix}.$$
Thus, the left $H_d(T)$-submodule of $M_d(T)$ spanned by 
$(l(\pi_D^s \otimes 1))_{0 \leqslant s < d}$ is equal to the left
$H_d(T)$-submodule of matrices that are lower triangular modulo
$M_d(\mathfrak{m}_T)$, and, by Corollary~\ref{lem:moretwisty}, this
left $H_d(T)$-submodule, in turn, is is equal to the image of the
$T$-algebra homomorphism $l \colon A \otimes_ST \to M_d(T)$.
\end{proof}

\begin{remark}\label{rem:milnorsquare}The top horizontal morphism
$i'$ in Proposition~\ref{prop:milnorsquare} maps the radical
$\mathfrak{m}_D \otimes_ST \subset A \otimes_ST$ onto the nilpotent
two-sided ideal $N_d(k_T) \subset B_d(k_T)$ of strictly lower triangular
matrices. Moreover, the composition
$$\xymatrix{
{ A \otimes_ST } \ar[r]^-{i'} &
{ B_d(k_T) } \ar[r] &
{ B_d(k_T)/N_d(k_T) } \ar[r] &
{ k_T \otimes_{k_S}k_T } \cr
}$$
of $i'$, the canonical projection, and the inverse of the isomorphism
that maps $t_1 \otimes t_2$ to the class of the diagonal matrix
$\operatorname{diag}(t_1t_2,\sigma^{-1}(t_1)t_2, \dots, \sigma^{-(d-1)}(t_1)t_2)$ is
equal to the canonical projection
$i \colon A \otimes_ST \to k_T \otimes_{k_S}k_T$. 
\end{remark}

\section{Localization sequences in $K$-theory and topological cyclic
  homology}\label{sec:localization}

In this section, we recall the algebraic $K$-theory symmetric spectrum
of a pointed exact category with weak equivalences following
Waldhausen~\cite[Section~1]{waldhausen} and prove a localization
sequence needed in the proof of Theorem~\ref{thm:theoremB}.

An exact category with weak equivalences is a triple
$(\mathcal{C},\mathcal{E},\mathcal{W})$ of a category $\mathcal{C}$;
a set $\mathcal{E}$ of exact sequences in $\mathcal{C}$ satisfying
axioms~(a) and~(b) in~\cite[\S2]{quillen}, axiom~(c) being
redundant~\cite[Appendix~A]{keller}; and a subcategory $\mathcal{W}$
of weak equivalences in $\mathcal{C}$ satisfying axioms~(Weq~1) and
(Weq~2) in~\cite[Section~1.2]{waldhausen}. An exact functor
$F \colon (\mathcal{C},\mathcal{E},\mathcal{W}) \to
(\mathcal{C}',\mathcal{E}',\mathcal{W}')$
between exact categories with weak equivalences is a functor
$F \colon \mathcal{C} \to \mathcal{C}'$ that maps $\mathcal{E}$ to
$\mathcal{E'}$ and $\mathcal{W}$ to $\mathcal{W}'$; and an exact
natural transformation between two such functors is a natural 
transformation $f \colon F \Rightarrow F'$ such that for every object
$c$ in $\mathcal{C}$, the morphism $f_c \colon F(c) \to F'(c)$ belongs
to $\mathcal{W}'$. Now Waldhausen's $S$-construction is a functor
$$\xymatrix{
{ \mathsf{ExCat} } \ar[r]^-{S} &
{ \mathsf{ExCat}^{\Delta^{\op}} } \cr
}$$
that to an exact category with weak equivalences assigns
a simplicial exact category with weak equivalences. The
construction, thus, may be iterated and gives, for every non-negative
integer $r$, a functor
$$\xymatrix{
{ \mathsf{ExCat} } \ar[r]^-{S^r} &
{ \mathsf{ExCat}^{(\Delta^{\op})^r} } \cr
}$$
that to an exact category with weak equivalences assigns an
$r$-simplicial exact category with weak equivalences. 

In the following, we will also write
$w(\mathcal{C},\mathcal{E},\mathcal{W})$ instead of $\mathcal{W}$ for 
the subcategory of weak equivalences. We define
$$(\mathcal{C}^w,\mathcal{E} \cap \mathscr{C}^w,\mathcal{W} \cap
\mathcal{C}^w) \subset (\mathcal{C},\mathcal{E},\mathcal{W})$$
be the full sub-exact category with weak equivalences consisting of
those objects $c$ in $\mathcal{C}$ with the property that $0 \to c$ is in
$\mathcal{W}$. Its subcategory of weak equivalences has a zero object,
and therefore, is contractible. In particular, the subspace
$$|N(w\,(\mathcal{C}^{w},\mathcal{E}\cap\mathcal{C}^{w}, 
\mathcal{W} \cap \mathcal{C}^{w}))| \; \subset \;
|N(w\,(\mathcal{C},\mathcal{E},\mathcal{W}))|$$
is contractible, and the pointed space given by the quotient
$$K(\mathcal{C},\mathcal{E},\mathcal{W})_r =
|N(w\,S^r(\mathcal{C},\mathcal{E},\mathcal{W}))|\,/\,
|N(w\,S^r(\mathcal{C}^{w},\mathcal{E}\cap\mathcal{C}^{w}, 
\mathcal{W} \cap \mathcal{C}^{w}))|$$
is by definition the $r$th space in the symmetric spectrum
$K(\mathcal{C},\mathcal{E},\mathcal{W})$. The left action by the
symmetric group $\Sigma_r$ on this pointed space is induced from the
permutation of the $r$-simplicial directions; and the spectrum
structure maps
$$\begin{xy}
(0,0)*+{ K(\mathcal{C},\mathcal{E},\mathcal{W})_r \wedge S^s }="1";
(37,0)*+{ K(\mathcal{C},\mathcal{E},\mathcal{W})_{r+s} }="2";
{ \ar^-{\sigma_{r,s}} "2";"1";};
\end{xy}$$
are induced by the inclusion of the $1$-skeleta in the last $s$
simplicial directions. We refer to~\cite[Appendix]{gh} for proof
that this is indeed a symmetric spectrum and for a discussion of
multiplicative properties of the construction. We also recall that, as
a consequence of the additivity theorem~\cite[Theorem~1.4.2,
Proposition~1.5.3]{waldhausen}, the symmetric spectrum 
$K(\mathcal{C},\mathcal{E},\mathcal{W})$ is fibrant in the positive
model structure on the category of symmetric spectra;
see~\cite[Section~14]{mandellmayshipleyschwede}.

An exact functor 
$F \colon (\mathcal{C},\mathcal{E},\mathcal{W}) \to
(\mathcal{C}',\mathcal{E}',\mathcal{W}')$ induces a morphism
$$\begin{xy}
(0,0)*+{ K(\mathcal{C},\mathcal{E},\mathcal{W}) }="1";
(33,0)*+{ K(\mathcal{C}',\mathcal{E}',\mathcal{W}'), }="2";
{ \ar^-{K(F)} "2";"1";};
\end{xy}$$
of symmetric spectra and an exact natural transformations
$f \colon F \Rightarrow F'$ between two such functors gives rise to a
homotopy $K(f)$ from $K(F)$ to $K(F')$. In this way, the $K$-theory
construction is a strict 2-functor from the strict 2-category of exact
categories with weak equivalences, exact functors, and exact natural
transformations to the strict 2-category of symmetric spectra,
morphisms of symmetric spectra, and homotopy classes of homotopies
between morphisms of symmetric spectra. Like every 2-functor, it takes
adjunctions in the domain 2-category to adjunctions in the target
2-category, and the latter adjunctions automatically are adjoint
equivalences, since the 2-morphisms in the target 2-category are
invertible.

An abelian category $\mathcal{M}$ has a canonical structure of an
exact category with weak equivalences, where the set of exact
sequences $\mathcal{E}$ consists of the sequences
$$\xymatrix{
{ M' } \ar[r]^-{i} &
{ M } \ar[r]^-{p} &
{ M'' } \cr
}$$
in $\mathcal{M}$ such that $i$ is a kernel of $p$ and $p$ a cokernel
of $i$, and where the subcategory of weak equivalences $\mathcal{W}$
is the subcategory of isomorphisms in $\mathcal{M}$. Moreover, an
additive full subcategory $\mathcal{P}$ of $\mathcal{M}$, which is
extension-closed in the sense that, for every sequence in
$\mathcal{E}$ whose initial term $M'$ and terminal term $M''$ are in
$\mathcal{P}$, also the middle term $M$ is in $\mathcal{P}$, has an
induced structure of exact category with weak equivalences, where the
set of exact sequences $\mathcal{E} \cap \mathcal{P}$ consists of the
sequences in $\mathcal{E}$ all of whose terms are in $\mathcal{P}$,
and where the subcategory of weak equivalences $\mathcal{W} \cap
\mathcal{P}$ is the full subcategory of $\mathcal{W}$ whose objects
are in $\mathcal{P}$.

Now let $R$ be a left noetherian ring, and let $\mathcal{M}_R$ and
$\mathcal{P}_R$ be the categories of finitely generated left
$R$-modules and finitely generated and projective left $R$-modules,
respectively. We assume that these categories are small, which may be
accomplished by assuming the axiom of
universe~\cite[Expos\'{e}]{SGA4I} or by some ad hoc restriction on the
modules allowed. The category $\mathcal{M}_R$ is abelian and the
additive full subcategory $\mathcal{P}_R$ is extension-closed. We
define $K'(R)$ and $K(R)$ to be the $K$-theory symmetric spectra
of the exact categories with weak equivalences associated to these as
discussed above. We recall that the canonical inclusion functor
induces a weak equivalence
$$\xymatrix{
{ K(R) } \ar[r] &
{ K'(R), } \cr
}$$
provided that $R$ is left regular in the sense that every object in
$\mathcal{M}_R$ admits a finite resolution by objects in
$\mathcal{P}_R$.

\begin{proposition}\label{prop:localizationsequence}Let $S$ be a
complete discrete valuation ring with residue field $k_S$ and quotient
field $K$ and let $R$ be an $S$-algebra. Assuming that, as a ring, $R$
is left regular, there is a canonical natural cofibration sequence of
symmetric spectra
$$\xymatrix{
{ K'(R \otimes_Sk_S) } \ar[r]^-{i_*} &
{ K(R) } \ar[r]^-{j^*} &
{ K(R \otimes_SK) } \ar[r]^-{\partial} &
{ \Sigma K'(R \otimes_Sk_S). }
}$$
The terms in the sequence have canonical natural $K(S)$-module
structures and the maps in the sequences respect these structures.
\end{proposition}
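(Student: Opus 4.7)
The plan is to apply Quillen's localization and devissage theorems, formulated in Waldhausen's framework, to the abelian category $\mathcal{M}_R$ of finitely generated left $R$-modules. Let $\pi_S \in S$ be a uniformizer; it lies in the center of $R$, and $R \otimes_SK$ is the central localization $R[\pi_S^{-1}]$. Let $\mathcal{M}_R^{\tor} \subset \mathcal{M}_R$ be the full subcategory of $\pi_S$-power torsion modules. Since $R$ is left noetherian, this is a Serre subcategory, and the exact localization functor induces a canonical equivalence of abelian categories $\mathcal{M}_R/\mathcal{M}_R^{\tor} \simeq \mathcal{M}_{R \otimes_SK}$, because inverting the central element $\pi_S$ is exact and annihilates precisely the objects of $\mathcal{M}_R^{\tor}$.

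I would then apply Waldhausen's fibration theorem to $\mathcal{M}_R$ equipped with the two subcategories of weak equivalences given by the isomorphisms and by the $\pi_S$-iso\-mor\-phisms (those maps whose kernel and cokernel lie in $\mathcal{M}_R^{\tor}$). The associated subcategory of objects weakly equivalent to $0$ is exactly $\mathcal{M}_R^{\tor}$, and Waldhausen's theorem yields a cofibration sequence of symmetric spectra
$$
K'(\mathcal{M}_R^{\tor}) \longrightarrow K'(R) \longrightarrow K'(R \otimes_SK).
$$
Quillen's devissage theorem applies to $\mathcal{M}_R^{\tor}$: any $M$ killed by $\pi_S^n$ carries the finite filtration $0 \subset \pi_S^{n-1}M \subset \cdots \subset \pi_SM \subset M$ with subquotients annihilated by $\pi_S$, so the inclusion of $\mathcal{M}_{R \otimes_Sk_S}$ induces a weak equivalence $K'(R \otimes_Sk_S) \xrightarrow{\sim} K'(\mathcal{M}_R^{\tor})$. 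The hypothesis that $R$ is left regular gives a weak equivalence $K(R) \xrightarrow{\sim} K'(R)$, and since left regularity is preserved by central flat localization, the ring $R \otimes_SK$ is also left regular, giving $K(R \otimes_SK) \xrightarrow{\sim} K'(R \otimes_SK)$. Substituting these equivalences produces the asserted cofibration sequence.

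The $K(S)$-module structure on every term, and its compatibility with the maps, follows formally from the biexact pairings $\mathcal{P}_S \times \mathcal{M}_{(-)} \to \mathcal{M}_{(-)}$ and $\mathcal{P}_S \times \mathcal{P}_{(-)} \to \mathcal{P}_{(-)}$ defined by tensor product over $S$, together with the multiplicative properties of the Waldhausen construction recalled in the excerpt from~\cite{gh}. Naturality in $R$ is clear from the construction. I expect the main obstacle to be technical rather than conceptual: Waldhausen's fibration theorem must be applied uniformly in the iterated $S^r$-construction in order to produce a cofibration sequence of symmetric spectra rather than a mere homotopy fiber sequence of underlying spaces, and this requires verifying the cylinder functor, saturation, and extension axioms for the category $\mathcal{M}_R$ with the two classes of weak equivalences above, as well as tracking the $\Sigma_r$-equivariance and $K(S)$-linearity of the resulting cofibration at each level.
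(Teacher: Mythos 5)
Your proposal follows the same conceptual route as the paper (localization, d\'{e}vissage, regularity), but the step where you intend to apply Waldhausen's fibration theorem directly to $\mathcal{M}_R$ with the isomorphisms and the $\pi_S$-isomorphisms as the two classes of weak equivalences does not work, and this is not a routine verification but a genuine obstruction. Waldhausen's fibration theorem~\cite[Theorem~1.6.4]{waldhausen} requires the ambient category to carry a cylinder functor satisfying the cylinder axiom for the coarser class of weak equivalences. An abelian category such as $\mathcal{M}_R$ with cofibrations the admissible monomorphisms does not admit such a cylinder functor: a mapping cylinder $T(f)$ of $f \colon X \to Y$ would have to receive a cofibration from $X \oplus Y$ and map by a $\pi_S$-isomorphism to $Y$, but the only natural candidate $T(f) = X\oplus Y$ fails, since $X\oplus Y\to Y$ is a $\pi_S$-isomorphism only when $X$ is torsion. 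You flag ``verifying the cylinder functor $\ldots$ axioms'' as a technical point, but in fact this is precisely where the plan breaks, and it forces a different setup.

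The paper circumvents this by replacing all module categories with the chain complex categories $\Ch^b(\mathcal{P},\mathcal{H},\mathcal{T})$, which do admit genuine mapping cylinders, and then applies the fibration theorem to the square of such categories; the cofibration sequence one gets is \emph{a priori} between $K$-theories of chain complex categories, and a fair amount of work --- Waldhausen's approximation theorem in the form~\cite[Theorem~1.9.8, Theorem~1.11.7]{thomasontrobaugh}, with the hypothesis~1.9.7.1 checked via~\cite[Lemma~1.5.3]{hm4}, plus Quillen's d\'{e}vissage~\cite[Theorem~4]{quillen} --- is then needed to identify the three terms with $K'(R\otimes_Sk_S)$, $K(R)$, and $K(R\otimes_SK)$. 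So the chain-complex detour is not a stylistic choice but the mechanism that makes the fibration theorem applicable and, in the process, forces the approximation-theorem identifications which your plan omits. Your observation that $\mathcal{M}_R/\mathcal{M}_R^{\tor}\simeq\mathcal{M}_{R\otimes_SK}$ and that devissage handles the torsion part is correct and is part of the paper's argument; it is the cylinder issue that needs to be repaired.
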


\begin{proof}We first introduce some notation. Let $\mathcal{M}$
be an abelian category, let $\mathcal{P}$ be an extension-closed full
additive subcategory of $\mathcal{M}$, and let $\mathcal{H}$ and
$\mathcal{T}$ be two Serre subcategories of $\mathcal{M}$. Let
$\mathcal{E} \cap \mathcal{P}$ be the set of exact sequences in the
exact category structure on $\mathcal{P}$ defined above. We define
$\mathsf{Ch}^b(\mathcal{P},\mathcal{H},\mathcal{T})$ to be the
following exact category with weak equivalences: The
underlying category has objects the bounded chain complexes in
$\mathcal{P}$ whose associated homology objects, calculated in
$\mathcal{M}$, are in the Serre subcategory $\mathcal{H}$, and has
morphisms all chain maps; the exact sequences in this category are the
sequences of complexes that degree-wise are in
$\mathcal{E} \cap \mathcal{P}$; and the weak equivalences are the
morphisms that, modulo the Serre subcategory $\mathcal{T}$, induce
isomorphisms of homology objects. If $\mathcal{T}$ is the Serre
subcategory of zero objects in $\mathcal{M}$, then we write
$\mathsf{Ch}^b(\mathcal{P},\mathcal{H})$ instead of
$\mathsf{Ch}^b(\mathcal{P},\mathcal{H},\mathcal{T})$.

We now let $\mathcal{T}_R$ be the Serre subcategory of $\mathcal{M}_R$
whose objects are the finitely generated left $R$-modules annihilated
by extension of scalars along $f \colon S \to K$ and consider the
diagram of $K$-theory symmetric spectra
$$\begin{xy}
(-20,7)*+{ K(\mathsf{Ch}^b(\mathcal{P}_R,\mathcal{T}_R)) }="11";
(20,7)*+{ K(\mathsf{Ch}^b(\mathcal{P}_R,\mathcal{T}_R,\mathcal{T}_R))
}="12";
(-20,-7)*+{ K(\mathsf{Ch}^b(\mathcal{P}_R,\mathcal{M}_R)) }="21";
(20,-7)*+{ K(\mathsf{Ch}^b(\mathcal{P}_R,\mathcal{M}_R,\mathcal{T}_R))
}="22";
{ \ar^-{J'} "12";"11";};
{ \ar^-{I} "21";"11";};
{ \ar^-(.45){I'} "22";"12";};
{ \ar^-{J} "22";"21";};
\end{xy}$$
with all maps induced by the respective canonical inclusion
functors. It follows from Waldhausen's fibration
theorem~\cite[Theorem~1.6.4]{waldhausen} that the diagram is homotopy
cartesian. Moreover, by the 2-functoriality of the $K$-theory
construction, the unique natural transformation from the identity
functor to a constant functor with value a zero object defines a
homotopy from the identity map of the upper right-hand term to the
constant map. Indeed, this natural transformation is exact. This
homotopy, in turn, determines a homotopy from the composite map $J
\circ I$ to the constant map, and the combined data determines a
cofibration sequence in the homotopy category of symmetric spectra. We
proceed to identify the terms and maps in this cofibration sequence
with the ones in the cofibration sequence in the statement.

We use Waldhausen's approximation
theorem~\cite[Theorem~1.6.7]{waldhausen}, but for our purposes, the
formulation in~\cite[Theorem~1.9.8]{thomasontrobaugh} is more
convenient. The theorem states that the map of $K$-theory symmetric
spectra induced by an exact functor $F$ is a weak equivalence, if a
list of hypotheses are satisfied. In our situation, the only
hypothesis that is not automatically satisfied is loc.~cit.~1.9.7.1,
which is the requirement that, for every object $B$ in the target of
$F$, there exists an object $A$ in the domain of $F$ and a weak
equivalence $f \colon F(A) \to B$ in the target of $F$. 

Now, the left-hand 
vertical map $I$ fits in the diagram 
$$\xymatrix{
{ K(\mathsf{Ch}^b(\mathcal{P}_R,\mathcal{T}_R)) } \ar[r]^-{I} \ar[d] &
{ K(\mathsf{Ch}^b(\mathcal{P}_R,\mathcal{M}_R)) } \ar@{=}[r] \ar[d] &
{ K(\mathsf{Ch}^b(\mathcal{P}_R,\mathcal{M}_R)) } \ar@{=}[d]  \cr
{ K(\mathsf{Ch}^b(\mathcal{M}_R,\mathcal{T}_R)) } \ar[r] &
{ K(\mathsf{Ch}^b(\mathcal{M}_R,\mathcal{M}_R)) } &
{ K(\mathsf{Ch}^b(\mathcal{P}_R,\mathcal{M}_R)) } \ar[l] \ar@{=}[d] \cr
{ K(\mathsf{Ch}^b(\mathcal{T}_R,\mathcal{T}_R)) } \ar[r] \ar[u] &
{ K(\mathsf{Ch}^b(\mathcal{M}_R,\mathcal{M}_R)) } \ar@{=}[u] &
{ K(\mathsf{Ch}^b(\mathcal{P}_R,\mathcal{M}_R)) } \ar[l] \cr
{ K(\mathcal{T}_R) } \ar[r] \ar[u]_-{F_0} &
{ K(\mathcal{M}_R) } \ar[u]_-{F_0} &
{ K(\mathcal{P}_R) } \ar[u]_-{F_0} \ar[l] \cr
{ K(\mathcal{M}_{R \otimes_Sk_S}) } \ar[u]_-{i_*} \ar[r]^-{i_*} &
{ K(\mathcal{M}_R) } \ar@{=}[u] &
{ K(\mathcal{P}_R), } \ar@{=}[u] \ar[l] \cr
}$$
where the unmarked maps are induced by the respective canonical
inclusion functors. The top vertical maps and the second and third
right-hand horizontal maps are weak equivalences by
op.~cit.~Theorem~1.9.8, and the vertical maps labelled $F_0$ are weak
equivalences by op.~cit.~Theorem~1.11.7. The second left-hand vertical
map also is a weak equivalence by op.~cit.~Theorem 1.9.8, the
hypothesis~1.9.7.1 being satisfied
by~\cite[Lemma~1.5.3]{hm4}. Finally, the lower left-hand vertical map
is a weak equivalence by Quillen's devissage
theorem~\cite[Theorem~4]{quillen}.

Similarly, the lower horizontal map $J$ in the diagram above fits in the
diagram
$$\xymatrix{
{ K(\mathsf{Ch}^b(\mathcal{P}_R,\mathcal{M}_R)) } \ar[r]^-{J} &
{ K(\mathsf{Ch}^b(\mathcal{P}_R,\mathcal{M}_R,\mathcal{T}_R)) } \ar[r]^-{j^*}
&
{ K(\mathsf{Ch}^b(\mathcal{P}_{R \otimes_SK},\mathcal{M}_{R
    \otimes_SK})) } \cr
{ K(\mathcal{P}_R) } \ar[rr]^-{j^*} \ar[u]_{F_0} &
{} &
{ K(\mathcal{P}_{R \otimes_SK}) } \ar[u]_{F_0} \cr
}$$
with the vertical maps induced by the exact functor that to a module
$M$ assigns the complex $F_0(M)$ whose degree $n$ term is $M$, if
$n = 0$, and a zero object $0$, otherwise. The upper right-hand
horizontal map is induced by the exact functor given by degree-wise
extension of scalars along $f \colon S \to K$, and it is a weak
equivalence, since this functor satisfies the hypotheses
op.~cit.~Theorem~1.9.8. The vertical maps also are weak equivalences
by op.~cit.~Theorem~1.11.7. 
\end{proof}

We will prove an analogue of
Proposition~\ref{prop:localizationsequence} for topological cyclic
homology, and begin by recalling the definition
following~\cite{dundasmccarthy}. We denote by $\mathbb{T}$ 
the circle group of complex numbers of modulus $1$ under
multiplication.

For $(\mathcal{C},\mathcal{E},\mathcal{W})$ an exact
category with weak equivalences, the B\"{o}kstedt-Dennis
trace map is a natural morphism of symmetric spectra with left
$\mathbb{T}$-action
\vspace{-1mm}
$$\begin{xy}
(0,0)*+{ K(\mathcal{C},\mathcal{E},\mathcal{W}) }="1";
(32,0)*+{ \THH(\mathcal{C},\mathcal{E},\mathcal{W}) }="2";
{ \ar^-{\tr} "2";"1";};
\end{xy}$$
from the $K$-theory symmetric spectrum with trivial left
$\mathbb{T}$-action to the topological Hochschild spectrum, the
definition and properties of which we now briefly discuss. The
topological Hochschild construction assigns to an additive
category $\mathcal{C}$ the left $\mathbb{T}$-space
$\THH(\mathcal{C})$ defined to be the realization of the
cyclic space $\THH(\mathcal{C})[-]$ given
in~\cite[Definition~1.3.6]{dundasmccarthy}. The left
$\mathbb{T}$-action is a consequence of Connes' theory of cyclic 
objects~\cite{drinfeld}, which also identifies the canonical inclusion of
the subspace of points fixed by the left $\mathbb{T}$-action with a
$\mathbb{T}$-equivariant map
\vspace{-1mm}
$$\begin{xy}
(0,0)*+{ \ob(\mathcal{C}) }="1";
(23,0)*+{ \THH(\mathcal{C}) }="2";
{ \ar^-{\tr} "2";"1";};
\end{xy}$$
from the set of objects in $\mathcal{C}$ considered as a discrete
space with trivial left $\mathbb{T}$-action. The topological
Hochschild symmetric spectrum of an exact category with weak
equivalences $(\mathcal{C},\mathcal{E},\mathcal{W})$ and the 
B\"{o}kstedt-Dennis trace map is defined by incorporating
Waldhausen's $S$-construction as follows. If $I$ is a small category,
then we define $(\mathcal{C},\mathcal{E},\mathcal{W})^I$ to be the
exact category with the category $\mathcal{C}^I$ of $I$-diagrams in 
$\mathcal{C}$ as underlying category; with the sequences in
$\mathcal{C}^I$ that, objectwise, are in $\mathcal{E}$ as the exact
sequences; and with the morphisms in $\mathcal{C}^I$ that, objectwise,
are in $\mathcal{W}$ as the weak equivalences. We also fix the
fully faithful functor
$$i \colon \Delta \to \mathsf{Cat}$$
that to a non-empty finite ordinal $[n]$
assigns the category $i([n])$ with object set $[n]$ and with a unique
morphism from $s$ to $t$ if and only if $s \leqslant t$, and that to
an order-preserving map $\theta \colon [m] \to [n]$ assigns the unique
functor $i(\theta) \colon i([m]) \to i([n])$ with the map $\theta$ as
the underlying map of object sets. With these preparations in hand, we
let
$$N^w(\mathcal{C},\mathcal{E},\mathcal{W})[-] \,\subset\,
(\mathcal{C},\mathcal{E},\mathcal{W})^{i([-])}$$
be the sub-simplicial exact category with weak equivalences, whose
underlying simplicial category is the \emph{full} sub-simplicial
category of $\mathcal{C}^{i([-])}$ with simplicial set of objects
given by $\ob(\mathcal{W}^{i([-])})$. Again, the subspace
$$| \THH(N^w(S^r(\mathcal{C}^w, \mathcal{E} \cap \mathcal{C}^w,
\mathcal{W} \cap \mathcal{C}^w))) | \; \subset \; |
\THH(N^w(S^r(\mathcal{C}, \mathcal{E}, \mathcal{W}))) |$$
is $\mathbb{T}$-equivariantly contractible, and the $r$th space in the
symmetric spectrum with left $\mathbb{T}$-action
$\THH(\mathcal{C},\mathcal{E},\mathcal{W})$ is defined to be the
quotient pointed left $\mathbb{T}$-space. Here, the topological
Hochschild construction is applied degreewise to the 
$(r+1)$-simplicial additive categories in question. The structure maps
\vspace{-1mm}
$$\begin{xy}
(0,0)*+{ \THH(\mathcal{C},\mathcal{E},\mathcal{W})_r \wedge S^s }="1";
(40,0)*+{ \THH(\mathcal{C},\mathcal{E},\mathcal{W})_{r+s} }="2";
{ \ar^-{\sigma_{r,s}} "2";"1"; };
\end{xy}$$
are defined analogously to those in the $K$-theory symmetric
spectrum.

We next recall the classical definition of the Frobenius maps
$$\xymatrix{
{ \THH(\mathcal{C},\mathcal{E},\mathcal{W}) } \ar[r]^-{\varphi_p} &
{ \THH(\mathcal{C},\mathcal{E},\mathcal{W})^{tC_p} } \cr
}$$
following~\cite[Section~III.5]{nikolausscholze}.\footnote{\,Recently,
a fully homotopy invariant definition based on the Tate diagonal
of~\cite[Section~III.1]{nikolausscholze} was given by
Nikolaus~\cite{nikolaus}.} We define maps of symmetric spectra with
left $\mathbb{T}$-action
$$\xymatrix{
{ \THH(\mathcal{C},\mathcal{E},\mathcal{W}) } &
{ \rho_p^*\THH(\mathcal{C},\mathcal{E},\mathcal{W})^{\phi C_p} } 
\ar[l]_-(.48){r_p} \ar[r]^-{s_p} &
{ \rho_p^*\THH(\mathcal{C},\mathcal{E},\mathcal{W})^{tC_p}, } \cr
}$$
where $C_p \subset \mathbb{T}$ is the subgroup of prime order $p$, and
where $\mathbb{T}$ acts on the middle and right-hand terms via the
$p$th root isomorphism
$\rho_p \colon \mathbb{T} \to \mathbb{T}/C_p$. The left-hand map will
be an equivalence, and hence, we may compose an inverse of $r_p$ with the
map $s_p$ to obtain the Frobenius map $\varphi_p$,
well-defined up to contractible choice, in the infinity-category
$\mathsf{Sp}^{B\mathbb{T}}$ of spectra with left
$\mathbb{T}$-action. The topological Hochschild 
construction~\cite[Section~1.3.6]{dundasmccarthy} gives, more
generally, a functor $\THH(\mathcal{C},\mathcal{E},\mathcal{W};-)$
from pointed spaces to symmetric spectra with left
$\mathbb{T}$-action, whose value at $S^0$ agrees with
$\THH(\mathcal{C},\mathcal{E},\mathcal{W})$. In order to define the
geometric fixed point spectrum and the Tate spectrum, we fix the
infinite dimensional complex $\mathbb{T}$-representation
$$\mathcal{U} = \bigoplus_{k \in \Z, i \in \N} \C_{k,i},$$
where $z \in \mathbb{T}$ acts on $\C_{k,i}$ by multiplication by
$z^{\,k}$. We will write $V \subset \mathcal{U}$ to
indicate that $V$ is a finite dimensional complex
sub-$\mathbb{T}$-representation of $\mathcal{U}$. Now, the pointed
space $\THH(\mathcal{C},\mathcal{E},\mathcal{W};S^V)_r$ has
two left $\mathbb{T}$-actions, one coming from the cyclic structure
and one induced by the left $\mathbb{T}$-action on $S^V$. If
we give it the diagonal $\mathbb{T}$-action, then
$$(\THH(\mathcal{C},\mathcal{E},\mathcal{W})^{\phi C_p})_r =
\operatornamewithlimits{hocolim}_{V \subset \mathcal{U},
  V^{C_p} = 0}
(\THH(\mathcal{C},\mathcal{E},\mathcal{W};S^V)_r^{C_p})$$
is the $r$th space of the geometric fixed point spectrum. Similarly,
$$(\THH(\mathcal{C},\mathcal{E},\mathcal{W})^{tC_p})_r =
\operatornamewithlimits{hocolim}_{V \subset \mathcal{U},
  V^{C_p} = 0}
(\THH(\mathcal{C},\mathcal{E},\mathcal{W};S^V)_r^{hC_p})$$
is the $r$th space of the Tate spectrum, and the map $s_p$ is induced
by the canonical map from fixed points to homotopy fixed points. The
map $r_p$, in turn, is the map from the first homotopy colimit induced
by $\mathbb{T}$-equivariant maps
$$\xymatrix{
{
  \rho_p^*(\THH(\mathcal{C},\mathcal{E},\mathcal{W};S^V)_r^{C_p})
} \ar[r] &
{ \THH(\mathcal{C},\mathcal{E},\mathcal{W};S^{\,0})_r } \cr
}$$
that exist in the B\"{o}kstedt model of topological Hochschild
homology and are given by restricting a $C_p$-equivariant map to the
induced map of $C_p$-fixed points. It is an equivalence
by~\cite[Theorem~III.4.7]{nikolausscholze}. 

In general, one uses the Frobenius maps to define a number of spectra
associated with a cyclotomic spectrum $X$. If $p$ is a prime number
and $s \geqslant 1$ an integer, then the Tate orbit
lemma,~\cite[Lemma~I.2.1]{nikolausscholze}, implies that the
canonical map
$$\xymatrix{
{ X^{tC_{p^s}} } \ar[r] &
{ (X^{tC_p})^{h(C_{\smash{p^s}}/C_p)}} \cr
}$$
is an equivalence.
%
%
%
%
Hence, by precomposing the inverse equivalence with
the map of homotopy $C_{\smash{p^{s-1}}}$-fixed points induced by
$\varphi_p \colon X \to X^{tC_p}$, we obtain a map
$$\xymatrix{
{ X^{hC_{\smash{p^{s-1}}}} } \ar[r] &
{ X^{tC_{\smash{p^s}}} } \cr
}$$
and define
$$\TR^n(X;p) = X \times_{\textstyle{X^{tC_p}}} X^{hC_p} 
\times_{\textstyle{X^{tC_{\smash{p^2}}}}}
X^{hC_{\smash{p^2}}} \times_{\textstyle{X^{tC_{\smash{p^3}}}}} \cdots
\times_{\textstyle{X^{tC_{\smash{p^{n-1}}}}}} X^{hC_{\smash{p^{n-1}}}},$$
where $X^{hC_{\smash{p^{s-1}}}} \to X^{tC_{\smash{p^s}}}$ are the maps
just defined, whereas
$X^{tC_{\smash{p^s}}} \leftarrow X^{hC_{\smash{p^s}}}$ are the
canonical maps. Moreover, the restriction and Frobenius maps
$$\xymatrix{
{ \TR^n(X;p) } \ar@<.7ex>[r]^-{R} \ar@<-.7ex>[r]_-{F} &
{ \TR^{n-1}(X;p) } \cr
}$$
are defined to be the projection onto the first $n-1$
factors and the composition of the projection onto the last $n-1$
factors and the map induced by the forgetful map $X^{hC_p} \to X$,
respectively. Since the fiber of the restriction map agrees with the
fiber of the canonical map 
$X^{hC_{\smash{p^{n-1}}}} \to X^{tC_{\smash{p^{n-1}}}}$, we get the
``fundamental'' cofibration sequence 
$$\xymatrix{
{ X_{hC_{\smash{p^{n-1}}}} } \ar[r]^-{N} &
{ \TR^n(X;p) } \ar[r]^-{R} &
{ \TR^{n-1}(X;p). } \cr
}$$
Finally, we define $\TC^n(X;p)$ to be the homotopy equalizer of
these two maps and define $\TC(X;p)$ to be their homotopy 
limit as $n \geqslant 1$ varies. It is proved
in~\cite[Theorem~II.4.10]{nikolausscholze} that if $X$ is
$p$-complete, then the spectrum $\TC(X;p)$ agrees canonically with the
spectrum $\TC(X)$ considered in the introduction. 

If $X$ is the cyclotomic spectrum
$\THH(\mathcal{C},\mathcal{E},\mathcal{W})$, then the 
B\"{o}kstedt-Dennis trace map lifts to a map of spectra called the
cyclotomic trace map
$$\xymatrix{
{ K(\mathcal{C},\mathcal{E},\mathcal{W}) } \ar[r]^-{\tr} &
{ \TC(\mathcal{C},\mathcal{E},\mathcal{W}). } \cr
}$$
It is not clear from the definition that topological cyclic homology
should be easier to understand than $K$-theory. We now explain
why this is often the case.

As for $K$-theory, the additivity
theorem~\cite[Proposition~2.0.4]{dundasmccarthy} implies that the
adjunct structure maps
\vspace{-1mm}
$$\xymatrix{
{ \THH(\mathcal{C},\mathcal{E},\mathcal{W})_r }
\ar[r]^-{\tilde{\sigma}_{r,s}} &
{ \Omega^s(\THH(\mathcal{C},\mathcal{E},\mathcal{W})_{r+s}) } \cr
}$$
are weak equivalences, for all integers $r \geqslant 1$ and
$s \geqslant 0$. However, by contrast with $K$-theory, these maps are
also weak equivalences, for $r = 0$ and $s \geqslant 0$, if $\mathcal{E}$ is 
equal to the set of split-exact sequences in $\mathcal{C}$;
see~\cite[Proposition~2.1.3]{dundasmccarthy}. Moreover, the
inclusion of the $0$-skeleton in
$N^w(\mathcal{C},\mathcal{E},\mathcal{W})$ induces a weak equivalence
$$\xymatrix{
{ \THH(\mathcal{C}) } \ar[r] &
{ \THH(\mathcal{C},\mathcal{E},\mathcal{W})_0, } \cr
}$$
if $\mathcal{W}$ is equal to the subcategory of isomorphisms in
$\mathcal{C}$.

Now let $A$ be a unital associative ring. By abuse of notation, we
write $A$ also for the additive category with a single object
$\emptyset$ whose ring of endomorphisms is $A$. There is an additive
functor $i \colon A \to \mathcal{P}_A$ that to the unique object
$\emptyset$ assigns $A$ considered as a left $A$-module under
multiplication and that to $a \in \End_A(\emptyset)$ assigns the
$A$-linear map $i(a) \in \End_{\mathcal{P}_A}(i(\emptyset))$ given by
right multiplication by $a$. The induced map
$$\begin{xy}
(0,0)*+{ \THH(A) }="1";
(28,0)*+{ \THH(\mathcal{P}_A) }="2";
{ \ar^-{\THH(i)} "2";"1";};
\end{xy}$$
is a weak equivalence by~\cite[Proposition~2.1.5]{dundasmccarthy}. The
domain of this map is B\"{o}kstedt's original topological Hochschild
space of the ring $A$, whose homotopy groups often are amenable
to calculation. Hence, we conclude that B\"{o}kstedt's topological
Hochschild homology groups and the homotopy groups of the topological
Hochschild spectrum of the category $\mathcal{P}_A$ equipped with its
canonical structure of exact category with weak equivalences agree, up
to canonical natural isomorphism. We will abuse notation and write
$\THH(A)$ also for the latter spectrum, and we write $\TR^n(A;p)$,
$\TC^n(A;p)$, etc.~for the associated spectra defined above. The
homotopy groups $\THH_q(A)$, however, are well-defined, up to
canonical natural isomorphism. 

We are now in a position to state and prove the analogue of
Proposition~\ref{prop:localizationsequence} for topological Hochschild
homology and its variants.

\begin{proposition}\label{prop:thhlocalizationsequence}Let $S$ be a
complete discrete valuation ring with residue field $k_S$ and quotient
field $K$ and let $R$ be an $S$-algebra. Assuming that, as a ring, $R$
is left regular, there is a canonical natural cofibration sequence of
cyclotomic spectra
$$\xymatrix{
{ \THH'(R \otimes_Sk_S) } \ar[r]^-{i_*} &
{ \THH(R) } \ar[r]^-{j^*} &
{ \THH(R \, | \, R \otimes_SK) } \ar[r]^-{\partial} &
{ \Sigma\THH'(R \otimes_Sk_S), } \cr
}$$
where the left-hand term and right-hand term denote the topological
Hochschild spectra of $\mathcal{M}_{R \otimes_Sk_S}$ and
$\mathsf{Ch}^b(\mathcal{P}_R,\mathcal{M}_R,\mathcal{T}_R)$,
respectively. The terms in the sequence have canonical
$\,\THH(S)$-module structures and the morphisms in the sequence are
$\,\THH(S)$-linear.
\end{proposition}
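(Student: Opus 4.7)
The plan is to mimic the proof of Proposition~\ref{prop:localizationsequence} step by step, replacing every invocation of a $K$-theoretic result by the corresponding result for $\THH$ and tracking the cyclotomic structure throughout. Concretely, I would form the same square of cyclotomic spectra
$$\begin{xy}
(-28,7)*+{ \THH(\mathsf{Ch}^b(\mathcal{P}_R,\mathcal{T}_R)) }="11";
(28,7)*+{ \THH(\mathsf{Ch}^b(\mathcal{P}_R,\mathcal{T}_R,\mathcal{T}_R)) }="12";
(-28,-7)*+{ \THH(\mathsf{Ch}^b(\mathcal{P}_R,\mathcal{M}_R)) }="21";
(28,-7)*+{ \THH(\mathsf{Ch}^b(\mathcal{P}_R,\mathcal{M}_R,\mathcal{T}_R)) }="22";
{ \ar "12";"11"; };
{ \ar "21";"11"; };
{ \ar "22";"12"; };
{ \ar "22";"21"; };
\end{xy}$$
with all maps induced by canonical inclusion functors, and argue that it is homotopy cartesian in cyclotomic spectra by the $\THH$-analogue of Waldhausen's fibration theorem. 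The upper right-hand term is contractible as a cyclotomic spectrum by the same 2-categorical null-homotopy used in the $K$-theory proof: the exact natural transformation from the identity to a zero-valued functor induces a null-homotopy of $\THH$ by the 2-functoriality of the $\THH$-construction, and this null-homotopy lifts to cyclotomic spectra because the geometric fixed point functor preserves it. I would then read off from this square a cofibration sequence of cyclotomic spectra whose middle and right-hand terms are, respectively, the lower-left and lower-right corners, with fiber the upper-left corner.

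To identify the four corners with the terms that should appear in the statement, I would climb the same vertical ladders as in the proof of Proposition~\ref{prop:localizationsequence}, invoking the $\THH$-analogue of Waldhausen's approximation theorem (due to Dundas--McCarthy) in place of Theorem~1.9.8 of Thomason--Trobaugh and the $\THH$-analogue of Gillet--Waldhausen in place of Theorem~1.11.7, together with the left regularity of $R$. This identifies the lower-left corner with $\THH(\mathcal{P}_R) \simeq \THH(R)$ and the lower-right corner with the category $\mathsf{Ch}^b(\mathcal{P}_R,\mathcal{M}_R,\mathcal{T}_R)$, which by the convention of the statement is $\THH(R\,|\,R\otimes_SK)$. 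The same chain of identifications reduces the upper-left corner to $\THH(\mathcal{T}_R)$, and I would then apply the devissage theorem for $\THH$ of Blumberg--Mandell to identify it with $\THH(\mathcal{M}_{R\otimes_Sk_S}) = \THH'(R\otimes_Sk_S)$. The hypothesis of devissage---that every object of $\mathcal{T}_R$ admits a finite filtration with subquotients in the essential image of $i_* \colon \mathcal{M}_{R\otimes_Sk_S} \to \mathcal{M}_R$---is satisfied because $\mathcal{T}_R$ consists exactly of the finitely generated $R$-modules that are annihilated by some power of $\pi_K \in S$.

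The $\THH(S)$-module structure on each term and the $\THH(S)$-linearity of the maps in the sequence are inherited from the lax symmetric monoidal structure of the $\THH$-construction together with the $S$-linearity of every functor in sight, exactly as in the $K$-theory argument. The main obstacle, compared to the proof of Proposition~\ref{prop:localizationsequence}, is the verification that the approximation, Gillet--Waldhausen, fibration, and devissage equivalences are equivalences of \emph{cyclotomic} spectra, and not merely of spectra with $\mathbb{T}$-action. For the fibration theorem and the null-homotopy this is automatic, since the geometric fixed point functor commutes both with homotopy pullbacks and with the relevant 2-functoriality; for the approximation and Gillet--Waldhausen theorems it follows because the hypotheses can be checked on the $S^V$-parametrized extension $\THH(\mathcal{C},\mathcal{E},\mathcal{W};-)$ featured in the construction of $\varphi_p$ and are then preserved by passage to $C_p$-fixed points; and for devissage it is part of what Blumberg--Mandell establish.
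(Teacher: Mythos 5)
Your plan matches the paper's own proof, which is literally ``repeat the proof of Proposition~\ref{prop:localizationsequence} mutatis mutandis'' and then supplies the $\THH$-analogues: the fibration theorem from~\cite[Theorem~1.3.11]{hm4}, dévissage from~\cite[Theorem~1]{dundas}, and the approximation theorem from~\cite[Proposition~2.3.2]{dundasmccarthy}. So the architecture is right. What you have not addressed is the single point the paper singles out as genuinely non-automatic: the Dundas--McCarthy approximation theorem requires a \emph{strictly stronger} hypothesis than its $K$-theoretic counterpart (it is not enough that every object of the target has a weak equivalence from an object in the image of the functor; one needs control at the level of the ambient abelian categories). You invoke ``the $\THH$-analogue of Waldhausen's approximation theorem'' as a black box, without saying how that stronger hypothesis is verified for the rungs of the ladder. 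The paper checks this using~\cite[Lemma~1.5.3]{hm4} and~\cite[Chapter~XVII, Proposition~1.2]{cartaneilenberg}, and it is precisely because that hypothesis fails for the extension-of-scalars map $\mathsf{Ch}^b(\mathcal{P}_R,\mathcal{M}_R,\mathcal{T}_R)\to\mathsf{Ch}^b(\mathcal{P}_{R\otimes_SK},\mathcal{M}_{R\otimes_SK})$ that the right-hand term cannot be simplified to $\THH(R\otimes_SK)$ and must instead be taken as the definition of $\THH(R\,|\,R\otimes_SK)$. You silently adopt that convention, but you give no indication that you know \emph{why} the further identification that works in $K$-theory breaks down here; the proposal as written would leave a reader wondering what the relative notation is for. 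A smaller point: for dévissage the paper cites Dundas rather than Blumberg--Mandell; the latter's results are stated for spectral categories rather than exact categories, so Dundas is the reference that plugs directly into the argument without extra translation. Finally, your careful worrying about whether the equivalences live in cyclotomic spectra rather than merely $\mathbb{T}$-spectra is reasonable caution, but is moot in this setup, since all the cited $\THH$-theorems already produce equivalences of cyclotomic spectra in the model used.
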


\begin{proof}We repeat the proof of
Proposition~\ref{prop:localizationsequence} mutatis mutandis. The analogues
of Waldhausen's fibration theorem and Quillen's d\'{e}vissage theorem
hold and are proved in~\cite[Theorem~1.3.11]{hm4}
and~\cite[Theorem~1]{dundas}, respectively. However, the analogue
of Waldhausen's approximation theorem, which is proved
in~\cite[Proposition~2.3.2]{dundasmccarthy}, requires a
stronger hypothesis to be satisfied. In the situation at hand, it
follows from~\cite[Lemma~1.5.3]{hm4} and~\cite[Chapter~XVII,
Proposition~1.2]{cartaneilenberg} that the hypothesis is satisfied in
all cases, with the exception that $\THH(R\,|\,R \otimes_SK)$
cannot be identified with $\THH(R \otimes_SK)$. 
\end{proof}

\begin{addendum}\label{add:localizationsequence}Let $S$ be a complete
discrete valuation ring with residue field $k_S$ and quotient field
$K$ and let $R$ be an $S$-algebra. Assuming that, as a ring, $R$ is
left regular, there is a commutative diagram of spectra
$$\xymatrix{
{ K'(R \otimes_Sk_S) } \ar[r]^-{i_*} \ar[d]^-{\tr} &
{ K(R) } \ar[r]^-{j^*} \ar[d]^-{\tr} &
{ K(R \, | \, R \otimes_SK) } \ar[r]^-{\partial} \ar[d]^-{\tr} &
{ \Sigma K'(R \otimes_Sk_S) } \ar[d]^-{\Sigma\tr} \cr
{ \TC'(R \otimes_Sk_S) } \ar[r]^-{i_*} &
{ \TC(R) } \ar[r]^-{j^*} &
{ \TC(R \, | \, R \otimes_SK) } \ar[r]^-{\partial} &
{ \Sigma\TC'(R \otimes_Sk_S) } \cr
}$$
in which the rows are cofibration sequences.
\end{addendum}

\begin{proof}The two sequences in the statement are obtained by
applying respectively the $K$-theory functor and the topological
cyclic homology functor to the same sequence of pointed exact
categories with weak equivalences, so the diagram commutes by the
naturality of the cyclotomic trace map. The sequences are cofibration
sequences by Proposition~\ref{prop:localizationsequence}
and Proposition~\ref{prop:thhlocalizationsequence}, respectively.
\end{proof}

\begin{remark}\label{rem:artinian}If the ring $R \otimes_Sk_S$ is artinian
and if its quotient $(R \otimes_S k_S)/J$ by the radical is regular, then
the morphisms
$$\xymatrix{
{ K((R \otimes_Sk_S)/J) } \ar[r] &
{ K'((R \otimes_Sk_S)/J) } \ar[r] &
{ K'(R \otimes_Sk_S) } \cr
}$$
induced by the canonical inclusion functor and by the
restriction-of-scalars functor are weak equivalences
by~\cite[Corollary~2 of Theorem~3]{quillen}
and~\cite[Theorem~4]{quillen}. The analogous statements for
topological Hochschild homology and its variants hold by~\cite[Theorem~2]{dundas}
and~\cite[Theorem~1]{dundas}.
\end{remark}

\begin{theorem}\label{thm:KofD}With notation as in the introduction, the
cyclotomic trace map induces isomorphisms of $p$-adic homotopy groups
in degrees $j \geqslant 1$,
\vspace{-1mm}
$$\begin{xy}
(0,0)*+{ K_j(D,\Zp) }="1";
(30,0)*+{ \TC_j(A \,|\, D,\Zp). }="2";
{ \ar^-{\tr} "2";"1";};
\end{xy}$$
\end{theorem}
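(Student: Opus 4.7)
My proof plan is to apply Addendum~\ref{add:localizationsequence} with $R = A$ and reduce the claim to the Hesselholt--Madsen and Quillen calculations for $A$ and for its residue field $k_T$. The hypothesis to check is that $A$ is left regular. This is classical: $A$ is a noncommutative discrete valuation ring with unique maximal two-sided ideal $\mathfrak{m}_D = (\pi_D)$, every finitely generated left $A$-module is a direct sum of a free module and cyclic torsion modules $A/\pi_D^nA$, and each such torsion module is resolved by the two-term complex $0 \to A \xrightarrow{\,\cdot\pi_D^n\,} A \to 0$. Hence Addendum~\ref{add:localizationsequence} applies and provides a commutative diagram of cofiber sequences of spectra, with vertical cyclotomic trace maps, whose upper row is the $K$-theory localization sequence for $R = A$ and whose lower row is the corresponding $\TC$ sequence.

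Next I would identify the terms. The ring $A\otimes_Sk_S \cong A/\pi_KA$ is artinian local with radical $\mathfrak{m}_D/(\pi_K)$ and radical quotient $A/\mathfrak{m}_D \cong k_T$, which is a field and in particular regular. So Remark~\ref{rem:artinian} supplies equivalences $K(k_T) \xrightarrow{\sim} K'(A\otimes_Sk_S)$ and $\TC(k_T) \xrightarrow{\sim} \TC'(A\otimes_Sk_S)$, compatible with the cyclotomic trace. Moreover, the final diagram in the proof of Proposition~\ref{prop:localizationsequence} (the one obtained by extension of scalars along $f\colon S\to K$) identifies $K(A\,|\,D)$ with $K(A\otimes_SK) = K(D)$.

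Finally, after $p$-completion the trace $K(k_T) \to \TC(k_T)$ is an equivalence in nonnegative degrees by Quillen's calculation of $K_*(k_T)$ together with Hesselholt--Madsen's calculation of $\TC_*(k_T)$; and the trace $K(A) \to \TC(A)$ is an isomorphism on $\pi_j$ for $j\geqslant 1$ by~\cite[Theorem~D]{hm} applied to the complete local ring $A$ with finite residue field $k_T$. Applying the five-lemma to the long exact sequences of $p$-adic homotopy groups arising from the two cofiber sequences then yields that $K_j(D,\Zp) \to \TC_j(A\,|\,D,\Zp)$ is an isomorphism for every $j\geqslant 1$, as required. The only real obstacle is verifying that the hypotheses of \cite[Theorem~D]{hm} apply to the noncommutative complete local ring $A$, but this case is in fact covered by that theorem; everything else is a matter of assembling the inputs.
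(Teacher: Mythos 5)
Your proposal is correct and follows essentially the same route as the paper: apply Addendum~\ref{add:localizationsequence} and Remark~\ref{rem:artinian} with $R=A$ to produce the map of cofibration sequences with $K(k_T)$, $K(A)$, $K(A\,|\,D)\simeq K(D)$ on top and the corresponding $\TC$-terms below, invoke~\cite[Theorem~D]{hm} for the vertical maps over $k_T$ and $A$, and finish with the five-lemma. You additionally spell out the verification that $A$ is left regular (as a noncommutative complete discrete valuation ring it is hereditary), a hypothesis of the Addendum that the paper uses without comment.
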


\begin{proof}The ring $A \otimes_Sk_S$ is artinian and its quotient
$k_T$ by the radical is regular. Hence, by Remark~\ref{rem:artinian},
Addendum~\ref{add:localizationsequence} gives a commutative diagram of
symmetric spectra
$$\xymatrix{
{ K(k_T) } \ar[r]^-{i_*} \ar[d]^-{\tr} &
{ K(A) } \ar[r]^-{j^*} \ar[d]^-{\tr} &
{ K(A \, | \, D) } \ar[r]^-{\partial} \ar[d]^-{\tr} &
{ \Sigma K(k_T) } \ar[d]^-{\Sigma\tr} \cr
{ \TC(k_T;p) } \ar[r]^-{i_*} &
{ \TC(A;p) } \ar[r]^-{j^*} &
{ \TC(A \, | \, D;p) } \ar[r]^-{\partial} &
{ \Sigma\TC(k_T;p) } \cr
}$$
in which the rows are cofibration
sequences. By~\cite[Theorem~D]{hm}, the first and second vertical
morphisms from the left induce isomorphisms of $p$-adic homotopy
groups in non-negative degrees, so the theorem follows from the
five-lemma.
\end{proof}

\section{Galois descent for topological cyclic
homology}\label{sec:etale}

In this section, we prove a rather general \'{e}tale descent result
for topological cyclic homology. The following result is well-known
for commutative algebras.

\begin{proposition}\label{prop:quasicoherent}Let $S$ be a commutative
ring, let $f \colon S \to T$ be an \'{e}tale morphism of commutative
rings, and let $A$ be any unital associative $S$-algebra, not
necessarily commutative. In this case, the map induced by extension of
scalars along $f \colon S \to T$,
$$\xymatrix{
{ \THH_j(A) \otimes_ST } \ar[r] &
{ \THH_j(A \otimes_ST), } \cr
}$$
is an isomorphism, for all integers $j$.
\end{proposition}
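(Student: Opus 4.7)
The plan is to reduce to the well-known commutative case via a Künneth-type formula for topological Hochschild homology.

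First, since $T$ is a commutative $S$-algebra while $A$ is only associative, the ring $A \otimes_S T$ is the pushout of the cospan $A \leftarrow S \to T$ in the $\infty$-category of associative $S$-algebras; the commutativity of $T$ is essential, as it is what allows $T$ to commute past $A$ in forming this coproduct. Viewing topological Hochschild homology as factorization homology over the circle, so that $\THH$ is symmetric monoidal, this pushout is preserved, yielding a natural equivalence
$$\THH(A \otimes_S T) \simeq \THH(A) \otimes_{\THH(S)}^L \THH(T)$$
of $\THH(S)$-module spectra.

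Second, the commutative case, to which the proposition refers as well-known, states that for the étale morphism $f \colon S \to T$, the canonical map
$$\THH(S) \otimes_S^L T \longrightarrow \THH(T)$$
is an equivalence of $\THH(S)$-module spectra. Substituting this into the previous equivalence and simplifying the relative tensor product over $\THH(S)$, we obtain a natural equivalence
$$\THH(A \otimes_S T) \simeq \THH(A) \otimes_S^L T.$$

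Finally, since $f$ is étale, hence flat, the hyper-$\operatorname{Tor}$ spectral sequence
$$E^2_{s,t} = \operatorname{Tor}_s^S(\THH_t(A), T) \Longrightarrow \pi_{s+t}(\THH(A) \otimes_S^L T)$$
collapses on the line $s = 0$, giving $\pi_j(\THH(A) \otimes_S^L T) \cong \THH_j(A) \otimes_S T$. Tracing through the definitions identifies this composite with the base-change map of the proposition, completing the proof.

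The main obstacle is the first step: one must verify that $A \otimes_S T$ is indeed the coproduct of $A$ and $T$ in associative $S$-algebras (where commutativity of $T$ is crucial) and that $\THH$, as a symmetric monoidal functor, carries this coproduct to the relative tensor product over $\THH(S)$. Once these $\infty$-categorical statements are in place, the remaining argument is essentially formal.
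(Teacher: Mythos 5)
There is a substantive error in your first step, and it is exactly at the place you yourself flag as ``the main obstacle.'' The pushout of the span $A \leftarrow S \to T$ in the (ordinary or $\infty$-)category of associative $S$-algebras is the coproduct of $A$ and $T$ over $S$, which is the \emph{free product} $A \ast_S T$, not the tensor product $A \otimes_S T$. These differ wildly in general: for $S = k$ a field, $A = k[x]$, $T = k[y]$ one has $A \ast_k T = k\langle x, y\rangle$, the free associative algebra, whereas $A \otimes_k T = k[x,y]$. The commutativity of $T$ does not change the universal property of the coproduct, and since coproducts of associative algebras are not sifted colimits, the sifted-colimit preservation of $\THH$ would not apply even if you did have a pushout. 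The K\"unneth equivalence $\THH(A \otimes_S T) \simeq \THH(A) \otimes_{\THH(S)} \THH(T)$ you want is nonetheless correct, but for a different reason: $\THH$, regarded as a functor from associative $S$-algebras (equipped with the $\otimes_S$-monoidal structure) to $\THH(S)$-modules (equipped with $\otimes_{\THH(S)}$), is \emph{strong} symmetric monoidal. One way to see this is that the levelwise shuffle equivalences
$(A \otimes_S T)^{\otimes(n+1)} \simeq A^{\otimes(n+1)} \otimes_{S^{\otimes(n+1)}} T^{\otimes(n+1)}$
(tensors over the sphere) identify the cyclic bar construction of $A \otimes_S T$ with the bisimplicial object computing the relative tensor product $\THH(A) \otimes_{\THH(S)} \THH(T)$, and geometric realizations commute. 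Note also that you need the derived tensor product $A \otimes^L_S T$ here; \'etale implies flat, so this agrees with the ordinary one, and the same flatness collapses your Tor spectral sequence in step four, as you say.

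Once step one is justified this way, steps two through four go through and give a valid proof, but one that is genuinely different from the paper's. The paper does not invoke a K\"unneth formula at all: it compares, termwise, the two Lindenstrauss relative spectral sequences $E^2_{i,j} = \HH_i(A/S;\THH_j(S;A)) \Rightarrow \THH_{i+j}(A)$ and $E^2_{i,j} = \HH_i(A\otimes_ST/T;\THH_j(T;A\otimes_ST)) \Rightarrow \THH_{i+j}(A\otimes_ST)$, using the Geller--Weibel \'etale descent theorem for Hochschild homology with coefficients to identify the $E^2$-terms. In particular, the paper derives (from Geller--Weibel and Lindenstrauss) the \'etale base change $\THH(S)\otimes_ST \simeq \THH(T)$ that your second step cites as well-known, so it is somewhat more self-contained; your route is shorter but leans on the strong monoidality of $\THH$ as an external input.
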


\begin{proof}To fix notation, given a commutative ring $S$, a unital
associative $S$-algebra $R$, and an $S$-symmetric $R$-$R$-bimodule
$M$, one has the Hochschild homology $S$-modules $\HH_j(R/S;M)$.
We abbreviate and write $\HH_j(R/S)$ instead of $\HH_j(R/S;M)$, if $M$
is $R$ considered as an $S$-symmetric $R$-$R$-bimodule via
left and right multiplication. We also abbreviate and write
$\HH_j(R;M)$ instead of $\HH_j(R/\Z;M)$. If the $S$-algebra $R$ is
commutative, then an $R$-module $N$ determines and is determined
by a unique $R$-symmetric $R$-$R$-bimodule $\tilde{N}$ with
underlying $R$-module $N$. In this situation, we abuse notation
and write $\HH_j(R/S;N)$ instead of $\HH_j(R/S;\tilde{N})$.

We recall from~\cite[Theorem~0.1]{gellerweibel} that, for
all integers $j$, the map
$$\xymatrix{
{ \HH_j(S) \otimes_ST } \ar[r] &
{ \HH_j(T) } \cr
}$$
induced by extension of scalars along $f \colon S \to T$ is an
isomorphism. More generally, for every $S$-module $N$, the map
induced by extension of scalars along $f \colon S \to T$,
$$\xymatrix{
{ \HH_j(S;N) \otimes_ST } \ar[r] &
{ \HH_j(T;N \otimes_ST), } \cr
}$$
is an isomorphism, for all integers $j$. Indeed, by choosing a simplicial
resolution of the $S$-module $N$ by free $S$-modules, we are reduced
to the case $N = S$. We conclude
from~\cite[Theorem~3.1]{lindenstrauss} that, similarly, the $T$-linear
map
$$\xymatrix{
{ \THH_j(S;N) \otimes_ST } \ar[r] &
{ \THH_j(T;N \otimes_ST) } \cr
}$$
induced by extension of scalars along $f \colon S \to T$ is an
isomorphism, for all integers $j$. It also follows from loc.~cit.~that
there is a spectral sequence of $S$-modules
$$\begin{xy}
(0,0)*+{ E_{i,j}^2 = \HH_i(A/S;\THH_j(S;A)) }="1";
(38,0)*+{ \THH_{i+j}(A), }="2";
{ \ar@{=>} "2";"1";};
\end{xy}$$
where the $S$-symmetric $A$-$A$-bimodule structure on $\THH_j(S;A)$ is
induced from left and right multiplication by $A$ on itself. We now extend
scalars along the flat morphism $f \colon S \to T$ to obtain a
spectral sequence of $T$-modules
$$\begin{xy}
(0,0)*+{ E_{s,t}^2 = \HH_i(A/S;\THH_j(S;A)) \otimes_ST
}="1";
(45,0)*+{ \THH_{i+j}(A) \otimes_ST, }="2";
{ \ar@{=>} "2";"1";};
\end{xy}$$
which we compare to the spectral sequence of $T$-modules
$$\begin{xy}
(0,0)*+{ E_{i,j}^2 = \HH_i(A \otimes_ST/T; \THH_j(T;A \otimes_ST))
}="1";
(50,0)*+{ \THH_{i+j}(A \otimes_ST), }="2";
{ \ar@{=>} "2";"1";};
\end{xy}$$
which also is an instance of loc.~cit. The map in the statement
induces a map of spectral sequences which, on $E^2$-terms, is the
composition of the isomorphism
$$\xymatrix{
{ \HH_i(A/S;\THH_j(S;A)) \otimes_ST } \ar[r] &
{ \HH_i(A \otimes_ST/T;\THH_j(S;A) \otimes_ST) } \cr
}$$
obtained by applying the exact functor $-\otimes_ST$ degreewise in the
Hochschild complex and the isomorphism obtained by applying $\HH_i(A
\otimes_ST/T;-)$ to the isomorphism
$$\xymatrix{
{ \THH_j(S;A) \otimes_ST } \ar[r] &
{ \THH_j(T;A \otimes_ST) } \cr
}$$
of $T$-symmetric $A \otimes_ST$-$A \otimes_ST$-bimodules. This
completes the proof.
\end{proof}

\begin{addendum}\label{add:trquasicoherent}Let $S$ be a commutative ring,
let $f \colon S \to T$ be an \'{e}tale morphism of commutative rings,
and let $A$ be any unital associative $S$-algebra, not necessarily
commutative. In this case, the map induced by extension of
scalars along $f \colon S \to T$,
$$\xymatrix{
{ \TR_j^n(A;p) \otimes_{W_n(S)} W_n(T) } \ar[r] &
{ \TR_j^n(A \otimes_ST;p), } \cr
}$$
is an isomorphism, for all prime numbers $p$ and integers
$n \geqslant 1$ and $j$.
\end{addendum}

\begin{proof}The proof is by induction on $n \geqslant 1$ with the case
$n = 1$ being already proved in
Proposition~\ref{prop:quasicoherent}. To prove the induction step, we
use the following diagram
$$\begin{xy}
(-27,28)*{ \vdots };
(-27,24)*{}="11";
(27,28)*{ \vdots };
(27,24)*{}="12";
(-27,14)*+{ \pi_j(\THH(A)_{hC_{\smash{p^{n-1}}}}) \otimes_{W_n(S)} W_n(T) }="21";
(27,14)*+{ \pi_j(\THH(A \otimes_ST)_{h C_{\smash{p^{n-1}}}}) }="22";
(-27,0)*+{ \TR_j^n(A;p) \otimes_{W_n(S)}W_n(T) }="31";
(27,0)*+{ \TR_j^n(A \otimes_ST;p) }="32";
(-27,-14)*+{ \TR_j^{n-1}(A;p) \otimes_{W_n(S)}W_n(T) }="41";
(27,-14)*+{ \TR_j^{n-1}(A \otimes_ST;p) }="42";
(-27,-26.5)*{ \vdots };
(-27,-24)*{}="51";
(27,-26.5)*{ \vdots };
(27,-24)*{}="52";
{ \ar^-{\partial} "21";"11";};
{ \ar^-{\partial} "22";"12";};
{ \ar "22";"21";};
{ \ar^-{N} "31";"21";};
{ \ar^-{N} "32";"22";};
{ \ar "32";"31";};
{ \ar^-{R} "41";"31";};
{ \ar^-{R} "42";"32";};
{ \ar "42";"41";};
{ \ar^-{\partial} "51";"41";};
{ \ar^-{\partial} "52";"42";};
\end{xy}$$
The right-hand column is the long exact sequence of
homotopy groups induced by the ``fundamental'' cofibration sequence 
for the cyclotomic spectrum $\THH(A \otimes_ST)$. It is a sequence
of $W_n(T)$-modules, by the argument in~\cite[pp.~71--72]{hm}. 
The left-hand column is obtained from the corresponding sequence for
$\THH(A)$ by extension of scalars along
$W_n(f) \colon W_n(S) \to W_n(T)$ and it is exact, 
since the ring homomorphism $W_n(f) \colon W_n(S) \to W_n(T)$ again is
\'{e}tale and hence flat by~\cite[Theorem~B]{borger}. Moreover, the
bottom groups $\TR_j^{n-1}(A;p)$ and
$\TR_j^{n-1}(A \otimes_ST;p)$ are considered a $W_n(S)$-module and a
$W_n(T)$-module, respectively, via the respective restriction
maps. Therefore, by~\cite[Corollary~15.4]{borger1}, the bottom
horizontal map agrees with the map
$$\xymatrix{
{ \TR_j^{n-1}(A;p) \otimes_{W_{n-1}(S)} W_{n-1}(T) } \ar[r] &
{ \TR_j^{n-1}(A \otimes_ST;p) } \cr
}$$
that we inductively assume to be an isomorphism. Hence, it will suffice to
show that the top horizontal map is an isomorphism. The argument
in~\cite[pp.~71--72]{hm} also shows that there is a natural spectral
sequence of $W_n(S)$-modules
$$\begin{xy}
(0,0)*+{ E_{i,j}^2 = H_i(C_{\smash{p^{n-1}}}, (F^{n-1})_*(\THH_j(A)))
}="1";
(50,0)*+{ \pi_{i+j}(\THH(A)_{hC_{\smash{p^{n-1}}}} \!) {}^{\phantom{2}} }="2";
{ \ar@{=>} "2";"1";};
\end{xy}$$
from the group homology of $C_{\smash{p^{n-1}}}$ with coefficients in
the $W_n(S)$-module obtained from the $S$-module $\THH_j(A)$
by restriction of scalars along $F^{n-1} \colon W_n(S) \to S$. By
cobase-change along the flat ring homomorphism $W_n(f) \colon W_n(S)
\to W_n(T)$, this gives a spectral sequence of $W_n(T)$-modules
converging to the domain of the top horizontal map in the diagram
above. The target of this map, in turn, is the abutment of the
spectral sequence of $W_n(T)$-modules
$$\begin{xy}
(0,0)*+{ E_{i,j}^2 = H_i(C_{\smash{p^{n-1}}},(F^{n-1})_*(\THH_j(A \otimes_ST))) }="1";
(58,0)*+{ \pi_{i+j}(\THH(A \otimes_ST)_{hC_{\smash{p^{n-1}}}}\!), {}^{\phantom{2}} }="2";
{ \ar@{=>} "2";"1";};
\end{xy}$$
and the map in question induces a map from the
former spectral sequence to the latter which, on $E^2$-terms, is
the $W_n(T)$-linear map
$$\xymatrix{
{ (F^{n-1})_*(\THH_j(A)) \otimes_{W_n(S)}W_n(T) } \ar[r] &
{ (F^{n-1})_*(\THH_j(A \otimes_ST)) } \cr
}$$
induced by extension of scalars along $f \colon S \to T$. 
But~\cite[Corollary~15.4]{borger1} shows that
$$\begin{xy}
(-12,7)*+{ W_n(S) }="11";
(12,7)*+{ S }="12";
(-12,-7)*+{ W_n(T) }="21";
(12,-7)*+{ T }="22";
{ \ar^-{F^{n-1}} "12";"11";};
{ \ar^-{W_n(f)} "21";"11";};
{ \ar^-{f} "22";"12";};
{ \ar^-{F^{n-1}} "22";"21";};
\end{xy}$$
is a cocartesian diagram of commutative rings, so
Proposition~\ref{prop:quasicoherent} implies that the this map is an
isomorphism. This completes the proof.
\end{proof}

A Galois extension of commutative rings is a pair
$(f,\rho)$ of a faithfully \'{e}tale ring homomorphism $f \colon S \to
T$ and a left action $\rho \colon G \to \Aut_S(T)$ by a finite group
$G$ on $T$ through $S$-algebra isomorphisms such that the ring
homomorphism
$$\xymatrix{
{ T \otimes_ST } \ar[r]^-{h} &
{ \prod_{g \in G} T } \cr
}$$
defined by $h(t_1 \otimes t_2) = (g(t_1)t_2)_{g \in G}$ is an
isomorphism. This notion is a special case of the general
notion of a torsor in a topos~\cite[Chapitre~III,
D\'{e}finition~1.4.1]{giraud}.

\begin{corollary}\label{cor:tracyclic}Suppose that
$(f \colon S \to T, \rho \colon G \to \Aut_S(T))$ is a Galois
extension of commutative rings and that $A$ is a unital associative
$S$-algebra, not necessarily commutative. For all prime numbers $p$
and integers $n \geqslant 1$ and $j$, the $W_n(S)$-linear map induced
by extension of scalars along $f \colon S \to T$,
$$\xymatrix{
{ \TR_j^n(A;p) } \ar[r]^-{f^*} &
{ H^0(G,\TR_j^n(A \otimes_ST;p)), } \cr
}$$
is an isomorphism and the $W_n(S)$-modules
$H^i(G,\TR_j^n(A \otimes_ST;p))$ with $i > 0$ vanish.
\end{corollary}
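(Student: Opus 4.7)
My plan is to reduce the statement to a standard piece of Galois descent for modules over a commutative ring, using Addendum~\ref{add:trquasicoherent} to rewrite $\TR_j^n(A\otimes_ST;p)$ in a form where the $G$-action is transparent. Concretely, the Addendum provides a natural $W_n(T)$-linear isomorphism
$$\TR_j^n(A;p)\otimes_{W_n(S)}W_n(T)\;\longrightarrow\;\TR_j^n(A\otimes_ST;p),$$
and under this isomorphism the $G$-action on the right, induced by functoriality from $\rho\colon G\to \Aut_S(T)$ acting on $A\otimes_ST$ by $\id\otimes\rho(g)$, corresponds to the action on the left through the second tensor factor. The map $f^*$ in the statement is then identified with the map $m\mapsto m\otimes 1$.

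The next step is to promote $(f,\rho)$ to a Galois extension of commutative rings after applying Witt vectors, i.e.\ to show that $W_n(f)\colon W_n(S)\to W_n(T)$ is a $G$-Galois extension with $G$ acting through $W_n(\rho)$. For this I would combine three facts: the functor $W_n$ preserves finite products of commutative rings; by~\cite[Theorem~B]{borger} $W_n(f)$ is \'{e}tale, and faithful flatness is preserved since $W_n$ preserves surjections onto residue structures; and by~\cite[Corollary~15.4]{borger1} the formation of Witt vectors commutes with pushout along an \'{e}tale morphism, so $W_n(T\otimes_ST)\cong W_n(T)\otimes_{W_n(S)}W_n(T)$. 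Applying $W_n$ to the Galois isomorphism $T\otimes_ST\xrightarrow{\,h\,}\prod_{g\in G}T$ therefore yields the analogous isomorphism for $W_n$, which is exactly the Galois condition for $W_n(f)$ equipped with the induced $G$-action.

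With $W_n(f)$ known to be Galois with group $G$, the corollary is an instance of the standard vanishing of group cohomology for the $G$-module $M\otimes_{W_n(S)}W_n(T)$ associated to any $W_n(S)$-module $M$: one has the canonical identification $M\otimes_{W_n(S)}(T\otimes_ST)\cong M\otimes_{W_n(S)}\prod_{g\in G}W_n(T)$, and the resulting cosimplicial \v{C}ech complex computing $H^*(G,M\otimes_{W_n(S)}W_n(T))$ is canonically identified with the Amitsur complex of $W_n(S)\to W_n(T)$ tensored with $M$, which is a resolution of $M$ by faithful flatness. Applying this with $M=\TR_j^n(A;p)$ and combining with the identification of the first paragraph yields the isomorphism $f^*\colon \TR_j^n(A;p)\xrightarrow{\sim}H^0(G,\TR_j^n(A\otimes_ST;p))$ and the vanishing of $H^i$ for $i>0$.

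The only non-routine point is the second step, namely the compatibility of Witt vectors with the Galois structure; everything else is formal given Addendum~\ref{add:trquasicoherent}. I expect this compatibility to follow cleanly from Borger's results as cited, but it is where one must be careful that the $G$-action transports correctly under the cobase change isomorphism $W_n(T\otimes_ST)\cong W_n(T)\otimes_{W_n(S)}W_n(T)$.
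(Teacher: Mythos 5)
Your argument is correct and follows essentially the same route as the paper: establish that $W_n(f)\colon W_n(S)\to W_n(T)$ is a $G$-Galois extension (the paper uses Borger's Theorem~B and Proposition~6.9 for \'etaleness and faithfulness, and Corollary~9.4 of the same paper rather than Corollary~15.4 of the sequel for the cobase-change isomorphism $W_n(T\otimes_ST)\cong W_n(T)\otimes_{W_n(S)}W_n(T)$), then run the standard Amitsur/Galois descent identification $M\simeq R\Hom_{k[G]}(k,M\otimes_kR)$ with $k=W_n(S)$, $R=W_n(T)$, $M=\TR^n_j(A;p)$, using Addendum~\ref{add:trquasicoherent} to identify $M\otimes_kR$ with $\TR^n_j(A\otimes_ST;p)$. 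The only soft spot is your justification that $W_n(f)$ is faithfully flat (``preserves surjections onto residue structures'' is vague); the paper cites Borger's Proposition~6.9 directly for faithfulness.
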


\begin{proof}Let $p$ be a prime number and let $n$ be a positive
integer. We claim that
$$\begin{xy}
(0,0)*+{ ( W_n(S) }="1";
(24,0)*+{ W_n(T), G }="2";
(55,0)*+{ \Aut_{W_n(S)}(W_n(T))) }="3";
{ \ar^-{W_n(f)} "2";"1";};
{ \ar^-{W_n \circ \rho} "3";"2";};
\end{xy}$$
is a Galois extension of commutative rings. The ring homomorphism
$W_n(f)$ is \'{e}tale, by~\cite[Theorem~B]{borger}, and faithful, by
op.~cit.,~Proposition~6.9, and we now consider the commutative diagram
$$\begin{xy}
(-18,7)*+{ W_n(T) \otimes_{W_n(S)}W_n(T) }="11";
(18,7)*+{ \prod_{g \in G} W_n(T) }="12";
(-18,-7)*+{ W_n(T \otimes_S T) }="21";
(18,-7)*+{ W_n(\prod_{g \in G} T) }="22";
{ \ar^-{h} "12";"11";};
{ \ar "21";"11";};
{ \ar "12";"22";};
{ \ar^-{W_n(h)} "22";"21";};
\end{xy}$$
in which the vertical maps are the canonical maps. It is proved in
op.~cit., Corollary~9.4, that the left-hand vertical map is an
isomorphism, and it follows immediately from the definition of Witt
vectors that the right-hand vertical map is an isomorphism. Finally,
by assumption, the lower horizontal map is an isomorphism, and hence,
the top horizontal map is an isomorphism, as desired. 

Finally, we abbreviate $M = \TR_j^n(A;p)$, $k = W_n(S)$,
and $R = W_n(T)$, and consider the augmented cosimplicial $k$-module
$$\xymatrix{
{ M } \ar[r]^-{\eta} &
{ M \otimes_kR^{\,\otimes_k[-]} }. \cr
}$$
Since $k \to R$ is faithfully flat, this map is a weak equivalence, by
faithfully flat descent for modules. We also consider the augmented
simplicial $k[G]$-module
$$\xymatrix{
{ k[G]^{\,\otimes_k[-]} \otimes_k N }
\ar[r]^-{\mu} &
{ N, }
}$$
which is a weak equivalence for every $k[G]$-module $N$. Now, since
$k \to R$ is Galois with group $G$, we have an isomorphism of
cosimplicial $k$-modules
$$\xymatrix{
{ M \otimes_kR^{\,\otimes_k[-]} } \ar[r] &
{ \Hom_{k[G]}(k[G]^{\otimes_k[-]}, M \otimes_kR). } \cr
}$$
Hence, we conclude that there is a canonical isomorphism
$$\xymatrix{
{ M } \ar[r] &
{ R\Hom_{k[G]}(k,M \otimes_kR), }
}$$
in the derived category of $k$-modules. This proves the corollary.
\end{proof}

\section{Reduced trace isomorphisms}\label{sec:Trd}

We now prove the first part of Theorem~\ref{thm:theoremB}, which is
equivalent to the statement that there exists an element
$\tilde{y} \in \TC_0(A\,|\,D,\Zp)$ whose image
$y_1 \in \THH_0(A\,|\,D,\Zp)$ freely generates
$\THH_*(A\,|\,D,\Zp)$ as a graded $\THH_*(S\,|\,K,\Zp)$-module.

\begin{lemma}\label{lem:conjugation}Let $\pi \in A$ be a
non-zero element. Conjugation by $\pi$ in $D$ defines an automorphism
$\alpha \colon A \to A$ and the endomorphism of the cyclotomic spectrum
$$\THH(A\,|\,D) =
\THH(\Ch^b(\mathcal{P}_A, \mathcal{M}_A,\mathcal{T}_A))$$
induced by extension of scalars along $\alpha$ is homotopic to the identity.
\end{lemma}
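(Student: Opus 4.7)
First, one checks that $\alpha$ is a well-defined $S$-algebra automorphism of $A$. Since $D$ is a division algebra and $\pi \neq 0$, the element $\pi$ is a unit in $D$, and for any $a \in A$ we have $v_D(\pi a \pi^{-1}) = v_D(a) \geqslant 0$, so $\pi a \pi^{-1}$ lies in $A$.

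The plan is to exhibit an exact natural transformation from the identity endofunctor to $\alpha^*$ on the exact category with weak equivalences $\Ch^b(\mathcal{P}_A, \mathcal{M}_A, \mathcal{T}_A)$, whose components are weak equivalences in the sense of this category, and then invoke the 2-functoriality of the $\THH$ construction recalled in Section~\ref{sec:localization} to obtain a homotopy. Writing $\alpha^*M$ for $M$ with the twisted left $A$-action $a \cdot m = \alpha(a) m$, I define
$$\eta_M \colon M \longrightarrow \alpha^*M, \qquad \eta_M(m) = \pi m.$$
This is $A$-linear, since $\eta_M(am) = \pi a m = (\pi a \pi^{-1}) \pi m = \alpha(a) \pi m = a \cdot \eta_M(m)$, and naturality in $M$ is evident. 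Applying $\eta$ levelwise to an object of $\Ch^b(\mathcal{P}_A, \mathcal{M}_A, \mathcal{T}_A)$ yields a natural transformation of exact endofunctors.

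The key verification is that each component $\eta_M$ is a weak equivalence, that is, that it becomes a quasi-isomorphism after extension of scalars along $j \colon A \to D$. But $\eta_M \otimes_A D$ is multiplication by $\pi \otimes 1$ on $M \otimes_A D$, which is an isomorphism because $\pi$ is a unit in $D$. Hence $\eta$ is an exact natural transformation in the sense of Section~\ref{sec:localization}.

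By the 2-functoriality of the $\THH$ construction on exact categories with weak equivalences, the exact natural transformation $\eta$ induces a homotopy $\THH(\eta)$ between the identity map of $\THH(A\,|\,D)$ and the endomorphism induced by $\alpha^*$, as maps of symmetric spectra with $\mathbb{T}$-action; and since the cyclotomic structure maps are natural with respect to exact functors and exact natural transformations, this homotopy refines to a homotopy in cyclotomic spectra. The main point of care is checking that $\eta_M$ really is a weak equivalence in the relevant sense; everything else is formal once that is established.
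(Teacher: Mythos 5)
Your approach is essentially the same as the paper's: produce a natural transformation of exact endofunctors of $\Ch^b(\mathcal{P}_A,\mathcal{M}_A,\mathcal{T}_A)$ given on each object by left multiplication by $\pi$, check that each component is a weak equivalence because $\pi$ is a unit in $D$, and invoke the $2$-functoriality of the $\THH$ construction to promote the natural transformation to a homotopy of cyclotomic spectra.

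There is, however, a convention slip. The module you call $\alpha^*M$, namely $M$ with the twisted action $a\cdot m=\alpha(a)m$, is \emph{restriction} of scalars along $\alpha$, i.e.\ the paper's $\alpha_*M$. Extension of scalars along $\alpha$ --- the functor named in the lemma and in the paper's conventions in Section~\ref{sec:cats} --- is $\alpha^*M=A\otimes_A M$ with $A$ viewed as a right $A$-module via $\alpha$; under the identification $a\otimes m\mapsto\alpha^{-1}(a)m$ this is $M$ with twisted action $a\cdot m=\alpha^{-1}(a)m$, and with that twist the map $m\mapsto\pi m$ is \emph{not} $A$-linear out of $M$ (one would need $\pi^2$ central). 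Your $A$-linearity computation is correct precisely for the restriction twist. Thus what you have constructed is a natural transformation $\eta\colon\id\Rightarrow\alpha_*$, which is the adjoint, under the adjunction $\alpha^*\dashv\alpha_*$, of the paper's natural transformation $h\colon\alpha^*\Rightarrow\id$ given by $h_P(1\otimes x)=\pi x$.

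The desired conclusion still follows from what you proved, because $\alpha$ is an automorphism: $\alpha^*$ and $\alpha_*$ are mutually inverse exact equivalences of the category $\Ch^b(\mathcal{P}_A,\mathcal{M}_A,\mathcal{T}_A)$, and $\THH$ is a $2$-functor, so $\THH(\alpha^*)$ is inverse to $\THH(\alpha_*)\simeq\id$, whence $\THH(\alpha^*)\simeq\id$. But you should either add this inversion step explicitly or, more simply, reverse the direction and take $h_P\colon\alpha^*P\to P$, $1\otimes x\mapsto\pi x$, which proves the statement directly.
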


\begin{proof}Since $v_D(\pi x \pi^{-1}) = v_D(x)$, conjugation
by $\pi$ in $D$ restricts to an automorphism $\alpha$ of $A$, as
stated. Extension of scalars along $\alpha$ defines an exact functor
$$\begin{xy}
(0,0)*+{ \Ch^b(\mathcal{P}_A, \mathcal{M}_A, \mathcal{T}_A) }="1";
(38,0)*+{ \Ch^b(\mathcal{P}_A, \mathcal{M}_A, \mathcal{T}_A) }="2";
{ \ar^-{\alpha^*} "2";"1";};
\end{xy}$$
and left multiplication by $\pi$ defines a natural transformation
$h \colon \alpha^* \Rightarrow \id$. Indeed, if $P$ is in
$\mathcal{P}_A$, then $\alpha^*(P) = A \otimes_AP$, where $A$ is
considered as a right $A$-module via $\alpha$, and the map
$h_P \colon A \otimes_A P \to P$, which maps
$1 \otimes x$ to $\pi x$ is $A$-linear, since
$$h_P(a \otimes x)
= h_P(1 \otimes \pi^{-1}a\pi x) 
= \pi \pi^{-1}a\pi x 
= a\pi x
= a h_P(x).$$
Since $\pi \in A$ is a non-zero-divisor, the natural transformation
$h \colon \alpha^* \Rightarrow \id$ is exact, and therefore, induces a
homotopy through maps of cyclotomic spectra from the map induced by
$\alpha^*$ to the identity map.
\end{proof}

\begin{addendum}\label{add:conjugation}The endomorphism of
$\THH(A \otimes_ST\,|\,D\otimes_KL)$ induced by extension of scalars
along $\alpha \otimes \id$ is homotopic to the identity, as a map of
cyclotomic spectra.
\end{addendum}

\begin{proof}The proof is entirely analogous to the proof of
Lemma~\ref{lem:conjugation}. That the natural transformation $h \colon
(\alpha \otimes \id)^* \Rightarrow \id$ is exact again uses that $\pi
\otimes 1$ is a non-zero-divisor in the regular ring $A \otimes_ST$.
\end{proof}

In particular, conjugation in $D$ by our chosen generator
$\pi_D \in \mathfrak{m}_D \subset A$ defines an automorphism
$\sigma \colon A \to A$. Moreover, since $\pi_D^{\,d} \in K$ is central,
we see that the action of $G = \Gal(L/K)$ on $T/S$ extends to an
action on $A/S$. This, in turn, induces an action of $G$ on
the cyclotomic spectrum $\THH(A\,|\,D)$, and by
Lemma~\ref{lem:conjugation}, the action by the
generator $\sigma \in G$ is homotopic to the identity map. It follows
that $G$ acts trivially on the homotopy groups 
$\THH_*(A\,|\,D)$, $\TR_*^n(A\,|\,D;p)$, $\TC_*(A\,|\,D)$,
etc.\footnote{\,By contrast, we show in Corollary~\ref{cor:nontrivialaction} below
  that if $p$ divides $d$, then the $G$-action on the
  cyclotomic spectrum $\THH(A\,|\,D)$ is non-trivial.}

\begin{theorem}\label{thm:trreducedtrace}For all $n \geqslant 1$, the
graded $\TR_*^n(S\,|\,K;p)$-module $\TR_*^n(A\,|\,D;p)$ is free of
rank one on a canonical generator $y_n$ of degree zero.
\end{theorem}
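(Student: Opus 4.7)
The plan is to adapt for $\TR^n$ the Galois-descent + Morita argument sketched in the introduction for $\TC^{-}$.

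\textbf{Step 1 (Relative Galois descent for $\TR^n$).} I would first upgrade Addendum~\ref{add:trquasicoherent} and Corollary~\ref{cor:tracyclic} from the absolute to the relative bar-setting. Comparing the long exact sequence of $\TR^n$-groups coming from the cofibration sequence of Proposition~\ref{prop:thhlocalizationsequence} for the $S$-algebra $A$ with the analogous sequence for the $T$-algebra $A \otimes_S T$ after extension of scalars along $W_n(f)\colon W_n(S) \to W_n(T)$, which is \'etale and hence flat by \cite[Theorem~B]{borger}, the five-lemma together with Addendum~\ref{add:trquasicoherent} yields a natural $W_n(T)$-linear isomorphism
\[
\TR_j^n(A\,|\,D;p) \otimes_{W_n(S)} W_n(T) \;\xrightarrow{\sim}\; \TR_j^n(A \otimes_S T\,|\,D \otimes_K L;p).
\]
The cosimplicial Galois-descent argument of Corollary~\ref{cor:tracyclic} then produces an isomorphism $\TR_*^n(A\,|\,D;p) \xrightarrow[\sim]{f^*} H^0(G,\TR_*^n(A \otimes_S T\,|\,D \otimes_K L;p))$ together with vanishing of higher $G$-cohomology, and the same argument applied to the pair $(S,T)$ gives $\TR_*^n(S\,|\,K;p) \xrightarrow[\sim]{f^*} H^0(G,\TR_*^n(T\,|\,L;p))$.

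\textbf{Step 2 (Morita on the relative level).} Next I would show that $\Ird_{A \otimes_S T/T}$, built from the adjunction of Lemma~\ref{lem:reducedtrace} and Remark~\ref{rem:reducedtrace}, induces an isomorphism of $\TR_*^n(T\,|\,L;p)$-modules $\TR_*^n(T\,|\,L;p) \xrightarrow{\sim} \TR_*^n(A \otimes_S T\,|\,D \otimes_K L;p)$. Classical Morita theory applied to the isomorphism $l\colon D \otimes_K L \xrightarrow{\sim} \End_L(D)$ preceding Proposition~\ref{prop:milnorsquare} promotes the adjunction $(\Trd,\Ird)$ to an adjoint equivalence after base-change along $T \to L$. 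Using the regularity of $A \otimes_S T$ from Theorem~\ref{thm:theoremC} together with the regularity of $T$, an approximation-theorem argument in the style of the proof of Proposition~\ref{prop:thhlocalizationsequence} verifies the hypothesis of \cite[Proposition~2.3.2]{dundasmccarthy} for the induced exact functor $\Ird\colon \mathsf{Ch}^b(\mathcal{P}_T,\mathcal{M}_T,\mathcal{T}_T) \to \mathsf{Ch}^b(\mathcal{P}_{A \otimes_S T},\mathcal{M}_{A \otimes_S T},\mathcal{T}_{A \otimes_S T})$, yielding the required equivalence of relative cyclotomic spectra.

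\textbf{Step 3 (Conclusion).} Chaining the isomorphisms of Steps~1 and~2 yields a composite $\TR_*^n(S\,|\,K;p)$-linear isomorphism
\begin{multline*}
\TR_*^n(S\,|\,K;p) \xrightarrow[\sim]{f^*} H^0(G,\TR_*^n(T\,|\,L;p)) \xrightarrow[\sim]{\Ird} \\
H^0(G,\TR_*^n(A \otimes_S T\,|\,D \otimes_K L;p)) \xleftarrow[\sim]{f^*} \TR_*^n(A\,|\,D;p),
\end{multline*}
which exhibits $\TR_*^n(A\,|\,D;p)$ as free of rank one over $\TR_*^n(S\,|\,K;p)$, with canonical generator $y_n \in \TR_0^n(A\,|\,D;p)$ characterized by the equality $f^*(y_n) = \Ird(f^*(1))$. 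The main obstacle is Step~2: lifting the classical Morita equivalence between $D \otimes_K L$ and $L$ to an equivalence of relative cyclotomic spectra, given that $A \otimes_S T$ is not a maximal $T$-order. This is precisely where Theorem~\ref{thm:theoremC} is essential---without the regularity of $A \otimes_S T$ the approximation theorem cannot be applied to compare the two bar-categories of chain complexes.
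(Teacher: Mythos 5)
Your overall strategy is the right one and matches the paper: Galois descent from $S$ to $T$ combined with a Morita equivalence between $A \otimes_S T$ and $T$ relative to $L$. Your Step~1 even correctly identifies and fills a small gap — Corollary~\ref{cor:tracyclic} as stated applies to the absolute groups $\TR^n_*(A;p)$, and passing to the relative groups $\TR^n_*(A\,|\,D;p)$ requires a five-lemma argument using Proposition~\ref{prop:thhlocalizationsequence} and the flatness of $W_n(f)$, exactly as you outline. Your Step~3 characterization of $y_n$ by $f^*(y_n) = \Ird_{A\otimes_S T/T}(f^*(1))$ also matches.

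However, there are two substantive problems. First, your Step~2 uses the wrong mechanism. You propose to apply the approximation theorem~\cite[Proposition~2.3.2]{dundasmccarthy} to the functor $\Ird$, citing the regularity from Theorem~\ref{thm:theoremC} to verify its hypotheses. The approximation theorem is the wrong tool here: it establishes that a single exact functor induces an equivalence on $K$/$\THH$ given a specific lifting property for weak equivalences, and it is not obvious how to verify that hypothesis for $\Ird$. The paper instead observes that the unit and counit of the adjunction $(\Trd_{A\otimes_S T/T}, \Ird_{A\otimes_S T/T},\epsilon,\eta)$ on the bar-categories $\Ch^b(\mathcal{P}_{-},\mathcal{M}_{-},\mathcal{T}_{-})$ are \emph{exact natural transformations}, i.e.\ their components are weak equivalences — precisely because they become isomorphisms after inverting $\pi_K$, which is the classical Morita statement. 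By the $2$-functoriality of the $\THH$ construction (adjunctions go to adjunctions, and $2$-morphisms in the target are invertible), $\Trd$ and $\Ird$ then induce mutually inverse equivalences, with no approximation theorem needed. Theorem~\ref{thm:theoremC} does enter, but its role is to guarantee (via Addendum~\ref{add:gradedmodulesprojectives} and Corollary~\ref{cor:restriction-projective}) that the coextension $\pi^!$ restricts to a well-defined exact functor between the $\mathcal{P}$-categories, so that $\Ird_{A\otimes_S T/T}$ exists at all, not to feed an approximation-theorem hypothesis.

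Second, and more seriously, you have omitted the $G$-equivariance bookkeeping entirely. For the square involving $f^*$, $\Trd$, and $\Ird$ to descend to a diagram of $H^0(G,-)$ groups, $G$ must act \emph{diagonally} on $A\otimes_S T$ (conjugation by $\pi_D$ on the first factor, Galois action on the second), so that all four functors are $G$-equivariant. This introduces a nontrivial $G$-action on $\THH(A\,|\,D)$ itself — one that Corollary~\ref{cor:nontrivialaction} shows is genuinely nontrivial as a cyclotomic spectrum when $p \mid d$ — and the Galois-descent isomorphism of Corollary~\ref{cor:tracyclic} is a priori for the second-factor action only. The proof only closes because Lemma~\ref{lem:conjugation} and Addendum~\ref{add:conjugation} (left multiplication by $\pi$ gives an exact natural transformation $\alpha^* \Rightarrow \id$) show the conjugation action is trivial on homotopy groups, so the diagonal and second-factor actions agree on $\TR^n_*$. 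Without this, the upper-left entry of your diagram cannot be identified with $\TR^n_*(A\,|\,D;p)$, and the descent argument does not go through. Relatedly, the \emph{canonicality} of $y_n$ — its independence of the choice of maximal unramified subfield $L \subset D$, which you assert but do not argue — requires Skolem--Noether together with Addendum~\ref{add:conjugation} to show that the vertical maps are independent of that choice.
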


\begin{proof}We consider the diagram of exact functors
$$\begin{xy}
(0,7)*+{ \Ch^b(\mathcal{P}_A, \mathcal{M}_A, \mathcal{T}_A) }="11";
(45,7)*+{ \Ch^b(\mathcal{P}_{A \otimes_ST}, \mathcal{M}_{A
    \otimes_ST}, \mathcal{T}_{A \otimes_ST}) }="12";
(0,-7)*+{ \Ch^b(\mathcal{P}_S, \mathcal{M}_S, \mathcal{T}_S)  }="21";
(45,-7)*+{ \Ch^b(\mathcal{P}_T, \mathcal{M}_T, \mathcal{T}_T), }="22";
{ \ar^-{f^*} "12";"11";};
{ \ar^-{f^*} "22";"21";};
{ \ar@<-.7ex>_-{\Trd_{A\otimes_ST/T}} "22";"12";};
{ \ar@<-.7ex>_-{\Ird_{A\otimes_ST/T}} "12";"22";};
\end{xy}$$
that we defined in the introduction. The group $G$ acts on $A$ and
$T$, and we let it act diagonally on $A \otimes_ST$ and trivially on $S$.
With respect to these actions, all functors in the diagram are
$G$-equivariant, and hence, we obtain the following induced diagram of
$\THH(S\,|\,K)$-modules in cyclotomic spectra with $G$-action,
$$\begin{xy}
(0,7)*+{ \THH(A\,|\,D) }="11";
(35,7)*+{ \THH(A \otimes_ST \,|\, D \otimes_KL) }="12";
(0,-7)*+{ \THH(S\,|\,K) }="21";
(35,-7)*+{ \THH(T\,|\,L). }="22";
{ \ar^-{f^*} "12";"11";};
{ \ar@<-.7ex>_-{\Trd_{A \otimes_ST/T}} "22";"12";};
{ \ar@<-.7ex>_-{\Ird_{A \otimes_ST/T}} "12";"22";};
{ \ar^-{f^*} "22";"21";};
\end{xy}$$
In this diagram, the right-hand vertical maps both are equivalences, since
the counit and the unit of the adjunction
$(\Trd_{A \otimes_ST/T}, \Ird_{A   \otimes_ST/T}, \epsilon, \eta)$
both are exact. While the $G$-action on the botton left-hand term is
trivial, this is generally not true for the upper left-hand term. However, by
Lemma~\ref{lem:conjugation}, the action is trivial on homotopy groups,
so we obtain the following induced diagram of graded
$\TR_*^n(S\,|\,K;p)$-modules, 
$$\begin{xy}
(0,7)*+{ \TR_*^n(A\,|\,D;p) }="11";
(43,7)*+{ H^0(G,\TR_*^n(A \otimes_ST \,|\, D \otimes_KL;p)) }="12";
(0,-7)*+{ \TR_*^n(S\,|\,K;p) }="21";
(43,-7)*+{ H^0(G,\TR_*^n(T\,|\,L;p)). }="22";
{ \ar^-{f^*} "12";"11";};
{ \ar@<-.7ex>_-{\Trd_{A \otimes_ST/T}} "22";"12";};
{ \ar@<-.7ex>_-{\Ird_{A \otimes_ST/T}} "12";"22";};
{ \ar^-{f^*} "22";"21";};
\end{xy}$$
In this diagram, the horizontal maps are isomorphisms by
Corollary~\ref{cor:tracyclic}, and the vertical maps are isomorphisms by
what was just said. Moreover, by Skolem-Noether, all $K$-algebra
homomorphisms $L \to D$ are conjugate in $D$, and therefore, it
follows from Addendum~\ref{add:conjugation} that the vertical maps
are independent of the choice of
maximal unramified subfield $L \subset D$. Hence, we obtain canonical
inverse isomorphisms
$$\begin{xy}
(0,0)*+{ \TR_*^n(A\,|\,D;p) }="1";
(33,0)*+{ \TR_*^n(S\,|\,K;p) }="2";
{ \ar@<.7ex>^-{\Trd_{A/S}} "2";"1";};
{ \ar@<.7ex>^-{\Ird_{A/S}} "1";"2";};
\end{xy}$$
given by the maps making the respective square diagrams
commute. Equivalently, the graded $\TR_*^n(S\,|\,K;p)$-module
$\TR_*^n(A\,|\,D;p)$ is free on the single canonical generator
$y_n = \Ird_{A/S}(1) \in \TR_0^n(A\,|\,D;p)$, as stated.
\end{proof}

Next, we wish to prove the analogous result for the $p$-adic homotopy
groups. In general, the $p$-adic homotopy groups of a spectrum $X$ are
defined to the homotopy groups of its $p$-completion, and
by~\cite[Proposition~2.5]{bousfield} there is an exact sequence
$$0 \to
\Ext_{\Z}^1(\Q_p/\Zp, \pi_j(X)) \to
\pi_j(X,\Zp) \to
\Hom_{\Z}(\Q_p/\Zp, \pi_{j-1}(X)) \to
0$$
that relates the homotopy groups of $X$ and the $p$-adic homotopy
groups of $X$.

\begin{definition}\label{def:pcontrolled}An abelian group $M$ is
\emph{$p$-controlled} if it is a direct sum of a uniquely divisible
$M_{\dv}$ and a group $M_{\tor}$ annihilated by some power of $p$. 
\end{definition}

\begin{lemma}\label{lem:cont}If $M$ and $N$ are $p$-controlled, then
every extension of $N$ by $M$ and the kernel and cokernel of every
homomorphism $M\to N$ are also $p$-controlled. If $M$ is
$p$-controlled, then $\Hom_{\Z}(M,\Z_p)$ and $\Hom_{\Z}(\Q_p/\Z_p,M)$
both vanish.
\end{lemma}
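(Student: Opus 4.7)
The plan is to prove functoriality of the decomposition first, use it to handle kernels and cokernels, treat extensions by an injectivity-plus-$\Ext$-vanishing argument, and finally check the two vanishing statements summand by summand.

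First I would observe that any homomorphism $h\colon M\to N$ between $p$-controlled groups respects the decomposition, in the sense that $h(M_{\dv})\subseteq N_{\dv}$ and $h(M_{\tor})\subseteq N_{\tor}$. This is because $M_{\dv}$ is divisible while $N_{\tor}$, being annihilated by some $p^n$, contains no nonzero divisible subgroup; and $M_{\tor}$ is torsion while $N_{\dv}$, being a $\Q$-vector space, is torsion-free. Hence $h=h_{\dv}\oplus h_{\tor}$, and $\ker h$ and $\operatorname{coker} h$ split as the direct sums of the corresponding kernels and cokernels of the restrictions: the $\dv$-parts remain $\Q$-vector spaces (any $\Z$-linear map between $\Q$-vector spaces is automatically $\Q$-linear, so kernels and cokernels computed in abelian groups agree with those in $\Q$-vector spaces), and the $\tor$-parts remain bounded $p$-power torsion.

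Next, given an extension $0\to M\to E\to N\to 0$, the plan is to build up the decomposition of $E$ in two splitting steps. Since $M_{\dv}$ is divisible, hence injective as a $\Z$-module, its inclusion into $E$ splits, reducing the problem to an extension $0\to M_{\tor}\to E'\to N\to 0$. Pulling back along $N_{\tor}\hookrightarrow N$ yields an extension of bounded-exponent $p$-torsion groups whose middle term $E''$ is itself bounded-exponent $p$-torsion, and the complementary extension $0\to E''\to E'\to N_{\dv}\to 0$ splits because $\Ext^1_{\Z}(N_{\dv},E'')=0$: multiplication by a common annihilator of $E''$ is simultaneously zero on $E''$ and invertible on $N_{\dv}$, so acts on $\Ext^1$ as both zero and an isomorphism. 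Thus $E\cong M_{\dv}\oplus E''\oplus N_{\dv}$ is $p$-controlled.

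Finally, for the vanishing statements I would evaluate on each summand. Any homomorphism $M_{\tor}\to\Zp$ is zero since $\Zp$ is torsion-free, and any homomorphism $M_{\dv}\to\Zp$ has divisible image in $\Zp$, hence is zero, since $\bigcap_n p^n\Zp=0$ implies that $\Zp$ has no nonzero divisible subgroup. Dually, $\Qp/\Zp$ being torsion kills any component into the torsion-free group $M_{\dv}$, and any homomorphism $\Qp/\Zp\to M_{\tor}$ has image divisible and annihilated by a fixed power of $p$, hence zero. The main obstacle is the splitting in the extension step, which relies on the vanishing of $\Ext^1$ between a $\Q$-vector space and a bounded-exponent torsion group; once this is in hand, everything else is routine bookkeeping with the two summands of each $p$-controlled group.
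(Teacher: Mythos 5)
Your proof is correct and follows essentially the same route as the paper: split off $M_{\dv}$ as a direct summand using injectivity of divisible groups, reduce to $M$ bounded $p$-torsion, pull back over $N_{\tor}$, and then kill $\Ext^1_{\Z}(N_{\dv},E'')$ by noting that multiplication by a bounding power of $p$ acts as both zero and an isomorphism. You additionally spell out the functoriality of the decomposition for kernels and cokernels and the summand-by-summand $\Hom$-vanishing, which the paper simply declares not to need proof.
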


\begin{proof}The only statement that needs proof is that the
full subcategory of the abelian category of abelian groups is closed
under extensions. So we let
$$\xymatrix{
{ 0 } \ar[r] &
{ M } \ar[r] &
{ P } \ar[r] &
{ N } \ar[r] &
{ 0 } \cr
}$$
be an exact sequence of abelian groups, where $M$ and $N$ are
$p$-controlled, and wish to show that $P$ is $p$-controlled. As
uniquely divisible groups are $\Z$-injective, $P$ is isomorphic to
$M_{\dv} \oplus P/M_{\dv}$, so replacing $M$ and $P$ by $M_{\tor}$ and
$P/M_{\dv}$, respectively, we assume that $M = M_{\tor}$. Let
$P^{\tor}$ and $P^{\dv}$ be the inverse images of $N_{\tor}$ and
$N_{\dv}$ in $P$, respectively. It suffices to prove that $P^{\tor}$ and
$P^{\dv}$ are $p$-controlled, so we may reduce to the two cases
$N=N_{\tor}$ and $N = N_{\dv}$. The first is trivial, and for the
second, it suffices to prove that $\Ext_{\Z}^1(N,M) = 0$. But this is
clear, since this group is both $p$-divisible and killed by a power of $p$.
\end{proof}

\begin{proposition}\label{prop:trpcontrolled}The group
$\TR^n_j(S\,|\,K;p)$ is $p$-controlled, for all $n\geqslant 1$ and
$j \geqslant 1$. 
\end{proposition}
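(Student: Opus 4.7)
The plan is to induct on $n \geqslant 1$, combining the fundamental cofibration sequence with the closure of $p$-controlled groups under extensions provided by Lemma~\ref{lem:cont}. In the base case $n = 1$, one has $\TR^1(S\,|\,K;p) = \THH(S\,|\,K)$, and I would use the localization cofibration of Proposition~\ref{prop:thhlocalizationsequence} to write down the long exact sequence
$$\cdots \to \THH_j(k_S) \to \THH_j(S) \to \THH_j(S\,|\,K) \to \THH_{j-1}(k_S) \to \cdots.$$
The groups $\THH_j(k_S)$ are $k_S$-modules, hence killed by $p$ and therefore $p$-controlled. For $\THH_j(S)$, every prime $\ell \neq p$ is a unit in $\THH_0(S) = S$ and acts invertibly on $\THH_*(S)$, confining all torsion to be $p$-primary. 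The explicit description of $\THH_*(S)$ for a complete discrete valuation ring with finite residue field of characteristic $p$ (cf.~\cite{hm4}, or, in equal characteristic, more directly from the inclusion $k_S \hookrightarrow S$ making $\THH_*(S)$ a $k_S$-module) then yields bounded $p$-torsion and uniquely divisible torsion-free quotients in each positive degree, so each $\THH_j(S)$ with $j \geqslant 1$ is $p$-controlled. Lemma~\ref{lem:cont} applied to the long exact sequence shows that $\THH_j(S\,|\,K)$ is $p$-controlled for $j \geqslant 1$, and a parallel analysis of $\THH_0(S\,|\,K)$ using the rational identification $\THH_0(S\,|\,K) \otimes \Q \cong K$ shows that this group is likewise $p$-controlled.

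For the inductive step, assume $\TR^{n-1}_j(S\,|\,K;p)$ is $p$-controlled for $j \geqslant 1$, and that $\THH_m(S\,|\,K)$ is $p$-controlled for all $m \geqslant 0$ as established above. The fundamental cofibration sequence
$$\THH(S\,|\,K)_{hC_{\smash{p^{n-1}}}} \longrightarrow \TR^n(S\,|\,K;p) \longrightarrow \TR^{n-1}(S\,|\,K;p)$$
yields a long exact sequence of homotopy groups, so by Lemma~\ref{lem:cont} it suffices to show that $\pi_j(\THH(S\,|\,K)_{hC_{\smash{p^{n-1}}}})$ is $p$-controlled for $j \geqslant 0$. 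I would run the group-homology spectral sequence
$$E^2_{i,j} = H_i(C_{\smash{p^{n-1}}},\THH_j(S\,|\,K)) \Longrightarrow \pi_{i+j}(\THH(S\,|\,K)_{hC_{\smash{p^{n-1}}}}).$$
The edge term $E^2_{0,j}$ is a quotient of the $p$-controlled group $\THH_j(S\,|\,K)$, hence $p$-controlled; and for $i \geqslant 1$, group homology of the finite $p$-group $C_{\smash{p^{n-1}}}$ is annihilated by the group order $p^{n-1}$, hence is bounded $p$-torsion and in particular $p$-controlled. Successive extensions of $p$-controlled groups are $p$-controlled by Lemma~\ref{lem:cont}, so the abutment is $p$-controlled, and a final application of Lemma~\ref{lem:cont} to the long exact sequence completes the induction.

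The main obstacle is the base case, and specifically the identification of $\THH_j(S)$ and $\THH_0(S\,|\,K)$ as $p$-controlled groups. The absence of $\ell$-torsion for $\ell \neq p$ is a formal consequence of the $S$-module structure, but the boundedness of the $p$-torsion and the uniquely divisible ($\Q$-vector space) structure on the torsion-free quotient rely on the explicit computation of $\THH_*(S)$ from \cite{hm4}. Once this structural input is granted, the remaining steps are formal manipulations combining the fundamental cofibration sequence, the group-homology spectral sequence, and Lemma~\ref{lem:cont}.
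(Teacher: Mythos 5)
Your overall strategy is close in spirit to the paper's, but there is a genuine gap in the inductive step that the paper's actual proof is carefully structured to avoid. You run the fundamental cofibration for $\THH(S\,|\,K)$ directly and assert that it suffices to show $\pi_j(\THH(S\,|\,K)_{hC_{\smash{p^{n-1}}}})$ is $p$-controlled for all $j \geqslant 0$; for $j = 0$ this group is $H_0(C_{\smash{p^{n-1}}},\THH_0(S\,|\,K)) = \THH_0(S\,|\,K) = S$. But in the mixed characteristic case $S$ is a torsion-free $\Z_p$-module on which $p$ does not act invertibly, so $S$ is \emph{not} $p$-controlled in the sense of Definition~\ref{def:pcontrolled}. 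Your earlier claim that $\THH_0(S\,|\,K)$ is $p$-controlled is therefore false, and the inductive step as written does not close. The paper sidesteps this precisely: it first uses the localization sequence and Lemma~\ref{lem:cont} to reduce the statement for $\TR^n_j(S\,|\,K;p)$ with $j \geqslant 1$ to the statement for $\TR^n_j(S;p)$ with $j \geqslant 1$ (the groups $\TR^n_j(k_S;p)$ being finite $p$-groups), and then in the fundamental cofibration for $\TR^n(S;p)$ it invokes the fact that the boundary map $\partial \colon \TR^{n-1}_1(S;p) \to \pi_0(\THH(S)_{hC_{\smash{p^{n-1}}}})$ vanishes for any commutative ring, so the problematic $\pi_0$ term never enters the relevant long exact sequence. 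Some input of this kind is essential, and your proof does not supply it.

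A secondary weakness is the base case. You gesture at the ``explicit description of $\THH_*(S)$'' from \cite{hm4} to conclude that $\THH_j(S)$ is $p$-controlled for $j \geqslant 1$, but this is precisely what needs proof, and citing a computation you have not spelled out does not count. The paper proves it by running the Lindenstrauss spectral sequence $E^2_{i,j} = \HH_i(S/\Z_p, \THH_j(\Z_p) \otimes_{\Z_p} S) \Rightarrow \THH_{i+j}(S)$ and feeding in the fact from \cite{larsenlindenstrauss} that $\THH_j(\Z_p)$ is $p$-controlled for $j \geqslant 1$, together with the observation that $\HH_i(S/\Z_p; M)$ is $M$ for $i = 0$ and is annihilated by a fixed power of $p$ for $i > 0$. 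You should give an argument at this level of precision rather than asserting the conclusion. (Your observation that $\ell \neq p$ acts invertibly is correct but only rules out prime-to-$p$ torsion; it says nothing about bounded $p$-torsion versus unbounded $p$-torsion, which is the crux.) Finally, the equal characteristic case is genuinely simpler and deserves to be handled separately: there $\TR^n_j(S;p)$ is annihilated by $p^n$ outright, so no spectral sequence analysis is needed.
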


\begin{proof}The groups $\TR_j^n(k_S;p)$ are finite $p$-groups
for all integers $n \geqslant 1$ and $j$. Hence, by
Proposition~\ref{prop:thhlocalizationsequence} and
Lemma~\ref{lem:cont}, it suffices to show that $\TR_j^n(S;p)$ is
$p$-controlled, for all $n \geqslant 1$ and $j \geqslant 1$. If $S$ is
of equal characteristic, then the groups $\TR_j^n(S;p)$ are
annihilated by $p^n$. So we assume that $S$ is of mixed characteristic
and proceed by induction on $n \geqslant 1$. In the case $n=1$, we use
the spectral sequence
$$E_{i,j}^2 = \HH_i(S/\Zp, \THH_j(\Z_p) \otimes_{\Z_p} S) \Rightarrow
\THH_{i+j} (S)$$ 
from~\cite[Corollary 3.3]{lindenstrauss}. If $j \geqslant 1$, then
$\THH_j(\Zp)$ is $p$-controlled, by Theorem~2.2 and
Example~3.4 of \cite{larsenlindenstrauss}, and hence, so are the
groups $E_{i,j}^2$, since $\HH_i(S/\Zp,M)$ is $M$, for $i = 0$, and is
annihilated by a fixed power of $p$, for $i > 0$. This also shows that
$E_{i,0}^2$ is $p$-controlled, for $i > 0$. Therefore,
Lemma~\ref{lem:cont} and the spectral sequence shows that $\THH_j(S)$
is $p$-controlled.

To prove the induction step, we use the ``fundamental'' cofibration
$$\xymatrix{
{ \THH(S)_{hC_{\smash{p^{n-1}}}} } \ar[r]^-{N} &
{ \TR^n(S;p) } \ar[r]^-{R} &
{ \TR^{n-1}(S;p) } \cr
}$$
and the spectral sequence
$$E_{i,j}^2 = H_i(C_{\smash{p^{n-1}}},\THH_j(S)) \Rightarrow
\pi_{i+j}(\THH(S)_{hC_{\smash{p^{n-1}}}}).$$
By the case $n = 1$, we conclude that $E_{i,j}^2$ is $p$-controlled,
if $i > 0$ or $j > 0$, and hence, Lemma~\ref{lem:cont} shows that
$\pi_j(\THH(S)_{hC_{\smash{p^{n-1}}}})$ is $p$-controlled, for
$j > 0$. The induction step now follows from Lemma~\ref{lem:cont},
since the boundary map
$$\xymatrix{
{ \TR_1^{n-1}(S;p) } \ar[r]^-{\partial} \ar[r]^-{\partial} &
{ \pi_0(\THH(S)_{hC_{\smash{p^{n-1}}}}) } \cr
}$$
is zero for every commutative ring.
\end{proof}

\begin{addendum}\label{add:trreducedtrace}The graded
$\TR_*^n(S\,|\,K;p,\Zp)$-module $\TR_*^n(A\,|\,D;p,\Zp)$ is free of
rank one with canonical generator $y_n \in \TR_0^n(A\,|\,D;p,\Zp)$, for
all $n \geqslant 1$.
\end{addendum}

\begin{proof}If $S$ is of equal characteristic, then 
$\TR^n(S\,|\,K;p)$ and $\TR^n(A\,|\,D;p)$ are already $p$-complete,
so there is nothing further to prove. If $S$ is of mixed
characteristic, then by Proposition~\ref{prop:trpcontrolled} and
Lemma~\ref{lem:cont}, the horizontal maps in the diagram
$$\begin{xy}
(0,7)*+{ \Ext_{\Z}^1(\Q_p/\Zp,\TR_*^n(A\,|\,D;p)) }="11";
(43,7)*+{ \TR_*^n(A\,|\,D;p,\Zp)) }="12";
(0,-7)*+{ \Ext_{\Z}^1(\Q_p/\Zp,\TR_*^n(S\,|\,K;p)) }="21";
(43,-7)*+{ \TR_*^n(S\,|\,K;p,\Zp)) }="22";
{ \ar "12";"11";};
{ \ar "22";"21";};
{ \ar@<-.7ex>_-{\Trd_{A/S}} "21";"11";};
{ \ar@<-.7ex>_-{\Trd_{A/S}} "22";"12";};
{ \ar@<-.7ex>_-{\Ird_{A/S}} "11";"21";};
{ \ar@<-.7ex>_-{\Ird_{A/S}} "12";"22";};
\end{xy}$$
of graded $\TR_*^n(S\,|\,K;p,\Zp)$-modules are isomorphisms, and we
define the right-hand vertical maps to be the unique maps that make
the respective square diagrams commute. Finally, by
Theorem~\ref{thm:trreducedtrace}, the left-hand vertical maps are
mutally inverse isomorphisms, and hence, so are the right-hand vertical
maps.
\end{proof}

\begin{lemma}\label{lem:mittagleffler}For all $j \geqslant 1$, the
limit systems
$$\begin{aligned}
{} & \xymatrix{
{ \cdots } \ar[r]^-{R} &
{ \TR_j^n(S;p,\Zp) } \ar[r]^-{R} &
{ \;\cdots\; } \ar[r]^-{R} &
{ \TR_j^2(S;p,\Zp) } \ar[r]^{R} &
{ \TR_j^1(S;p,\Zp) } \cr
} \cr
{} & \xymatrix{
{ \cdots } \ar[r]^-{F} &
{ \TR_j^n(S;p,\Zp) } \ar[r]^-{F} &
{ \;\cdots\; } \ar[r]^-{F} &
{ \TR_j^2(S;p,\Zp) } \ar[r]^{F} &
{ \TR_j^1(S;p,\Zp) } \cr
} \cr
\end{aligned}$$
both satisfy the Mittag-Leffler condition.
\end{lemma}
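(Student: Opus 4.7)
The plan is to prove the stronger statement that each $\TR_j^n(S;p,\Zp)$ is a bounded $p$-torsion, finitely generated $W_n(S)$-module, and to derive Mittag--Leffler from this finite-generation property together with the structure of the ring $W_n(S)$.

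First, by Proposition~\ref{prop:trpcontrolled} the group $\TR_j^n(S;p)$ is $p$-controlled for $j \geqslant 1$. The Bousfield short exact sequence
$$0 \to \Ext_{\Z}^1(\Qp/\Zp,\TR_j^n(S;p)) \to \TR_j^n(S;p,\Zp) \to \Hom_{\Z}(\Qp/\Zp,\TR_{j-1}^n(S;p)) \to 0$$
combined with Lemma~\ref{lem:cont} and the $p$-adic separatedness of $W_n(S) = \TR_0^n(S;p)$ identifies $\TR_j^n(S;p,\Zp)$ with the bounded $p$-torsion summand of $\TR_j^n(S;p)$. In particular each such group has bounded $p$-power exponent.

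Second, I prove by induction on $n \geqslant 1$ that $\TR_j^n(S;p,\Zp)$ is finitely generated as a $W_n(S)$-module for $j \geqslant 1$. For the base case $n=1$, the Lindenstrauss spectral sequence $\HH_i(S/\Zp;\THH_j(\Zp) \otimes_{\Zp} S) \Rightarrow \THH_{i+j}(S)$ combined with B\"{o}kstedt's computation that $\THH_j(\Zp)$ is finite for $j \geqslant 1$ yields finite generation of $\THH_j(S,\Zp)$ over $S = W_1(S)$. For the inductive step I use the $p$-completed fundamental cofibration sequence $\THH(S)_{hC_{p^{n-1}}} \to \TR^n(S;p) \to \TR^{n-1}(S;p)$ together with the group-homology spectral sequence $E_{i,j}^2 = H_i(C_{p^{n-1}};\THH_j(S)) \Rightarrow \pi_{i+j}(\THH(S)_{hC_{p^{n-1}}})$; finite generation over $W_n(S)$ is inherited by the $E^2$-page and survives to the abutment.

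Finally, each image $\im(R^m : \TR_j^{n+m}(S;p,\Zp) \to \TR_j^n(S;p,\Zp))$ is a $W_n(S)$-submodule of a finitely generated module of exponent $p^k$, hence a submodule of a finitely generated module over the quotient ring $W_n(S)/p^k$. In the mixed-characteristic case $W_n(S)$ is a finitely generated $\Zp$-module, so $W_n(S)/p^k$ is a finite ring and the descending chain stabilizes by the descending chain condition; the same reasoning applies to the $F$-tower. The main obstacle is the equal-characteristic case, where $W_n(S)/p^k$ is not Artinian and one must instead use that the images are closed $W_n(S)$-submodules under the natural $\pi$-adic topology, with stabilization obtained by Noetherian descent on the associated graded pieces.
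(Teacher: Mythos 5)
The overall strategy you take---establish that each $\TR_j^n(S;p,\Zp)$ is a bounded $p$-torsion, finitely generated $W_n(S)$-module, and then try to deduce the Mittag-Leffler property from module finiteness---is not what the paper does, and it breaks down in exactly the case where you flag a difficulty.

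In the mixed characteristic case, the paper simply asserts (citing \cite{lindenstraussmadsen} for $n=1$ and an induction like that in Proposition~\ref{prop:trpcontrolled}) that the groups $\TR_j^n(S;p,\Zp)$ with $j\geqslant 1$ are \emph{finite} $p$-groups, which gives Mittag-Leffler for free. Your route via finite generation over $W_n(S)$ and finiteness of $W_n(S)/p^k$ can be made to work here, but it introduces extra claims (finite generation of $W_n(S)$ as a $\Zp$-module, finite generation of the spectral-sequence abutments over $W_n(S)$) that you assert rather than verify, and it yields nothing beyond what the direct finiteness argument gives.

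The genuine gap is the equal characteristic case, and your own closing sentence gives the game away. A descending chain of $W_n(S)$-submodules (closed or not) of a finitely generated module over a \emph{Noetherian} but non-Artinian ring need not stabilize; Noetherian gives the ascending chain condition, not the descending one, and ``Noetherian descent on the associated graded pieces'' is not a valid substitute. Indeed already for $n=1$ one has $\TR_j^1(S;p)\cong S$ or $S\cdot dt$ for $S=k_S[\![t]\!]$, and $S\supset(t)\supset(t^2)\supset\cdots$ is an unbounded descending chain of closed submodules. So finiteness over $W_n(S)/p^k$ alone cannot give Mittag-Leffler in this case; some additional structural input about the maps $R$ and $F$ is required. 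What the paper actually uses is the isomorphism $W_n\Omega_S^*\otimes\TR_*^n(\Fp;p)\to\TR_*^n(S;p)$ (from \cite[Theorem~B]{h} and \cite{popescu}), together with the explicit description of the de~Rham--Witt groups of $k_S[\![t]\!]$ from \cite[Theorem~B]{gh1} and the known relations $R(x_n)=px_{n-1}$, $F(x_n)=x_{n-1}$, to exhibit \emph{uniform} stabilization: for each fixed $n$ the images of $R^m$ (resp.\ $F^m$) in $\TR_j^n(S;p)$ agree with the image of $R^n$ (resp.\ $F^n$) from $\TR_j^{2n}(S;p)$ for all $m\geqslant n$. Without an explicit control of that kind on the transition maps, your argument does not reach the conclusion.
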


\begin{proof}If $S$ is of mixed characteristic, then the groups
$\TR_j^n(S;p,\Zp)$ with $j \geqslant 1$ are all finite
$p$-groups. Indeed, the case $n = 1$ is proved
in~\cite{lindenstraussmadsen}, and the general case follows by an
inductive argument similar to the proof of
Proposition~\ref{prop:trpcontrolled} above. In particular, the limit
systems satisfy the Mittag-Leffler condition. If $S$ is of equal
characteristic, then it follows from~\cite[Theorem~B]{h} that the
canonical map
$$\xymatrix{
{ W_n\Omega_S^* \otimes \TR_*^n(\Fp;p) } \ar[r] &
{ \TR_*^n(S;p) } \cr
}$$
is an isomorphism. Indeed, the ring $S = k_S[\![t]\!]$ is a regular
$k_S$-algebra, and hence, is a filtered colimit of smooth
$k_S$-algebras by~\cite{popescu}. Moreover, by B\"{o}kstedt
periodicity, we have $\TR_*^n(\Fp;p) = \Z/\!p^n\,[x_n]$, where $x_n$ has
degree $2$ and may be chosen such that $R(x_n) = px_{n-1}$ and
$F(x_n) = x_{n-1}$. The structure of the de~Rham-Witt groups was
determined in~\cite[Theorem~B]{gh1}. It shows in particular that
$$\begin{aligned}
{} \im( \! \xymatrix{
{ \lim_{m,R} \TR_j^m(S;p) } \ar[r]^-{\pr} &
{ \TR_j^n(S;p) } \cr
} \! ) 
{} & \,=\, \im( \! \xymatrix{
{ \TR_j^{2n}(S;p) } \ar[r]^-{R^n} &
{ \TR_j^n(S;p) } \cr
} \! ), \cr
{} \im( \! \xymatrix{
{ \lim_{m,F} \TR_j^m(S;p) } \ar[r]^-{\pr} &
{ \TR_j^n(S;p) } \cr
} \! ) 
{} & \,=\, \im( \! \xymatrix{
{ \TR_j^{2n}(S;p) } \ar[r]^-{F^n} &
{ \TR_j^n(S;p) } \cr
} \! ), \cr
\end{aligned}$$
whence the lemma.
\end{proof}

\begin{theorem}\label{thm:trtf}The graded
  $\TR_*(S\,|\,K;p,\Zp)$-module $\TR_*(A\,|\,D;p,\Zp)$ is free 
 on a canonical generator $y$. The graded
  $\TF_*(S\,|\,K;p,\Zp)$-module $\TF_*(A\,|\,D;p,\Zp)$ is free
  on a canonical generator $y$. 
\end{theorem}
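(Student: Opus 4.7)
The plan is to build $y$ by assembling the canonical generators $y_n$ of Addendum~\ref{add:trreducedtrace} across $n$, passing to the limit using the Mittag-Leffler vanishing provided by Lemma~\ref{lem:mittagleffler}. Recall that
$$\TR(X;p) \,=\, \holim_{n,R} \TR^n(X;p), \qquad \TF(X;p) \,=\, \holim_{n,F} \TR^n(X;p),$$
and after $p$-completion both limit systems have Mittag-Leffler for $S$ (hence for $S\,|\,K$, by the localization sequence and finiteness of $\TR_j^n(k_S;p)$). Applying the isomorphism $\Trd_{A/S}\colon\TR_*^n(S\,|\,K;p,\Zp)\to\TR_*^n(A\,|\,D;p,\Zp)$ of Addendum~\ref{add:trreducedtrace}, the Mittag-Leffler condition transfers to $\TR_j^n(A\,|\,D;p,\Zp)$ under both $R$ and $F$. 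Hence the Milnor $\lim^1$ contributions vanish and we obtain natural isomorphisms
$$\TR_j(A\,|\,D;p,\Zp) \,\cong\, \lim_{n,R} \TR_j^n(A\,|\,D;p,\Zp), \qquad \TF_j(A\,|\,D;p,\Zp) \,\cong\, \lim_{n,F}\TR_j^n(A\,|\,D;p,\Zp),$$
and likewise for $S\,|\,K$ in place of $A\,|\,D$.

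The key point is then to show that the generators $y_n$ of Addendum~\ref{add:trreducedtrace} are compatible with restriction and Frobenius, that is, $R(y_n) = y_{n-1}$ and $F(y_n) = y_{n-1}$. By construction, $y_n$ is the element characterized by $f^*(y_n) = \Ird_{A\otimes_ST/T}(f^*(1))$ inside $H^0(G,\TR_0^n(A\otimes_ST\,|\,D\otimes_KL;p,\Zp))$. The square diagram in the proof of Theorem~\ref{thm:trreducedtrace} is built from exact functors of exact categories with weak equivalences; both $R$ and $F$ on $\TR^n$ are natural with respect to maps of cyclotomic spectra arising from such functors, and the Galois descent isomorphism of Corollary~\ref{cor:tracyclic} holds uniformly in $n$. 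Hence the reduced trace isomorphism commutes with $R$ and $F$, and the characterizing equation for $y_n$ descends to $y_{n-1}$ under either map. This gives compatible families $(y_n)_n$ in both inverse systems.

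Setting $y \in \TR_0(A\,|\,D;p,\Zp)$ (respectively $y \in \TF_0(A\,|\,D;p,\Zp)$) to be the element represented by $(y_n)_n$, one obtains the asserted generator. Freeness passes to the limit: the isomorphisms $\Ird_{A/S}\colon \TR_*^n(A\,|\,D;p,\Zp)\to\TR_*^n(S\,|\,K;p,\Zp)$ sending $y_n$ to $1$ commute with $R$ and $F$, and their inverse limits therefore produce an isomorphism of graded $\TR_*(S\,|\,K;p,\Zp)$-modules (respectively $\TF_*(S\,|\,K;p,\Zp)$-modules) sending $y$ to $1$. Canonicity of $y$ follows from canonicity of each $y_n$ and the fact that the Milnor exact sequence identifies an element of the limit with a compatible family of elements at finite stages.

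The main obstacle is the verification that the reduced trace isomorphism constructed at each finite stage genuinely commutes with the Frobenius map $F$, since $F$ is built from the cyclotomic Frobenius $\varphi_p$ of Section~\ref{sec:localization} and the formal properties of this map are more subtle than those of $R$; however, because $\Trd_{A/S}$ at level $n$ is induced by a map of $\THH(S\,|\,K)$-modules in cyclotomic spectra (via the $G$-fixed-point identification coming from Corollary~\ref{cor:tracyclic}, which is natural in $n$), compatibility with both $R$ and $F$ is a formal consequence of naturality in the cyclotomic structure, and no further work is needed once Lemma~\ref{lem:mittagleffler} is in hand.
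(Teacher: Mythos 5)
Your proof takes essentially the same approach as the paper's: both construct $y$ by assembling the finite-level generators $y_n$ of Addendum~\ref{add:trreducedtrace}, establish $R$- and $F$-compatibility ($F(y_n) = \Ird_{A/S}(F(1)) = y_{n-1}$, and similarly for $R$) from the fact that $\Ird_{A/S}$ is induced by a map of cyclotomic spectra while $R$ and $F$ are ring homomorphisms, and then invoke Lemma~\ref{lem:mittagleffler} to kill $\lim^1$ and pass to the limit.

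One small but genuine gap remains: Lemma~\ref{lem:mittagleffler} is stated only for degrees $j \geqslant 1$, so the vanishing of $\lim^1_{n,F}\TR_0^n(S\,|\,K;p,\Zp)$ --- which is what guarantees that $\TF_{-1}(S\,|\,K;p,\Zp)$ and $\TF_{-1}(A\,|\,D;p,\Zp)$ vanish, so that the graded module isomorphism really holds in every degree and not just for $j \geqslant 0$ --- is not covered by your argument. The paper closes this case separately, observing that $\TR_0^n(S\,|\,K;p,\Zp) = W_n(S)$ admits a compact topology for which $F$ is continuous; that argument (or an appeal to Lemma~\ref{lem:wittvectors}) should be appended to complete your proof.
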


\begin{proof}We prove the second statement; the first statement is
proved analogously. By Theorem~\ref{thm:trreducedtrace}, the graded
$\TR_*^n(S\,|\,K;p,\Zp)$-module $\TR_*^n(A\,|\,D;p,\Zp)$ is free on
the canonical generator $y_n = \Ird_{A/S}(1) \in
\TR_0^n(A\,|\,D;p,\Zp)$. Moreover, we have
$$F(y_n) = F(\Ird_{A/S}(1)) = \Ird_{A/S}(F(1)) = \Ird_{A/S}(1) =
y_{n-1}.$$
Indeed, the second identity holds, since the maps
\vspace{-1mm}
$$\begin{xy}
(0,7)*+{ \THH(A\,|\,D) }="11";
(35,7)*+{ \THH(A \otimes_ST \,|\, D \otimes_KL) }="12";
(0,-7)*+{ \THH(S\,|\,K) }="21";
(35,-7)*+{ \THH(T\,|\,L). }="22";
{ \ar^-{f^*} "12";"11";};
{ \ar@<-.7ex>_-{\Trd_{A \otimes_ST/T}} "22";"12";};
{ \ar@<-.7ex>_-{\Ird_{A \otimes_ST/T}} "12";"22";};
{ \ar^-{f^*} "22";"21";};
\end{xy}$$
that we used to define the isomorphism $\Ird_{A/S}$ are maps of
cyclotomic spectra, and the third identity holds, since $F$ is a ring
homomorphism. Finally, we claim that the canonical maps
$$\begin{aligned}
{} & \xymatrix{
{ \TF_j(A\,|\,D;p,\Zp) } \ar[r] &
{ \lim_{n,F} \TR_j^n(A\,|\,D;p,\Zp), } \cr
} \cr
{} & \xymatrix{
{ \TF_j(S\,|\,K;p,\Zp) } \ar[r] &
{ \lim_{n,F} \TR_j^n(S\,|\,K;p,\Zp)\phantom{,} } \cr
} \cr
\end{aligned}$$
are isomorphisms, or equivalently, that the corresponding derived
limits vanish. In the case $j \geqslant 1$, this follows from
Lemma~\ref{lem:mittagleffler}, and in the case $j = 0$, it follows
from the fact that $\TR_0^n(S\,|\,K;p,\Zp) = W_n(S)$ admits a compact
topology with respect to which the map $F$ is continuous. So the
theorem follows with $y \in \TF_0(A\,|\,D;p,\Zp)$ the unique class
with image $y_n \in \TR_0^n(A\,|\,D;p,\Zp)$ for all $n \geqslant 1$.
\end{proof}

\begin{proof}[Proof of Theorem~\ref{thm:theoremB}~{\rm (1)}]We recall
that for a cyclotomic spectrum $X$, its topological cyclic homology is
equivalently given as the homotopy equalizer 
$$\xymatrix{
{ \TC(X;p) } \ar[r]^-{i} &
{ \TF(X;p) } \ar@<.7ex>[r]^-{\varphi^{-1}} \ar@<-.7ex>[r]_-{\id} &
{ \TF(X;p), } \cr
}$$
where the inverse Frobenius $\varphi^{-1}$ is defined to be the composition
$$\xymatrix{
{ \holim_{n,F} \TR^n(X;p) } \ar[r]^-{\operatorname{res}_\sigma} &
{ \holim_{n,F} \TR^{n+1}(X;p) } \ar[r]^-{R} &
{ \holim_{n,F} \TR^n(X;p) } \cr
}$$
of the restriction along the successor functor followed by the map of
limits induced by the restriction maps. Hence, in the induced diagram
$$\xymatrix{
{ \TC_0(X;p) } \ar[r]^-{i} &
{ \TF_0(X;p) } \ar@<.7ex>[r]^-{\varphi^{-1}} \ar@<-.7ex>[r]_-{\id} &
{ \TF_0(X;p), } \cr
}$$
the map $i$ surjects onto the equalizer of maps $\varphi^{-1}$ and
$\id$. Now, if $X$ is the cyclotomic spectrum $\THH(A\,|\,D,\Zp)$,
then the element $y \in \TF_0(A\,|\,D;p,\Zp)$ satisfies
$\varphi^{-1}(y) = y$, because $R(y_n) = y_{n-1}$. Hence, there exists
$\tilde{y} \in \TC_0(A\,|\,D\;,\Zp)$ such that $i(\tilde{y}) = y$. The
class $\tilde{y}$ defines a component in the mapping space
$$\begin{aligned}
{} & \Map_{\operatorname{Mod}_{\THH(S\,|\,K,\Zp)}
(\mathsf{CycSp}_p)}(\THH(S\,|\,K,\Zp),\THH(A\,|\,D,\Zp)) \cr
{} & \simeq \Map_{\mathsf{CycSp}_p}
(\mathbb{S}_p,\THH(A\,|\,D,\Zp)) \simeq
\Omega^{\infty}\TC(A\,|\,D,\Zp), \cr
\end{aligned}$$
and every point in this component induces the isomorphism
\vspace{-1mm}
$$\begin{xy}
(0,0)*+{ \THH_*(S\,|\,K,\Zp) }="1";
(36,0)*+{ \THH_*(A\,|\,D,\Zp) }="2";
{ \ar^-{\Ird_{A/S}} "2";"1";};
\end{xy}$$
on the level of homotopy groups. Indeed, the class
$\tilde{y} \in \TC_0(A\,|\,D,\Zp)$ maps to the class
$y_1 \in \THH_0(A\,|\,D,\Zp)$ and $\Ird_{A/S}$ is given by
multiplication by $y_1$. This shows that every point in the component
$\tilde{y}$ of the mapping space defines an equivalence of
$\THH(S\,|\,K,\Zp)$-modules in cyclotomic spectra,
\vspace{-1mm}
$$\begin{xy}
(0,0)*+{ \THH(S\,|\,K,\Zp) }="1";
(33,0)*+{ \THH(A\,|\,D,\Zp), }="2";
{ \ar^-{\Ird_{A/S}} "2";"1";};
\end{xy}$$
whose inverse $\Trd_{A/S}$ therefore also is an equivalence of
$\THH(S\,|\,K,\Zp)$-modules in cyclotomic spectra. This proves
part~(1) of the Theorem~\ref{thm:theoremB}.
\end{proof}

\section{Comparison with the trace map}\label{sec:comparison}

The equivalence of cyclotomic spectra in
Theorem~\ref{thm:theoremB}~(1) shows, in particular, that the graded
$\TC_*^{-}(S\,|\,K,\Zp)$-module $\TC_*^{-}(A\,|\,D,\Zp)$ and the
graded $\TP_*(S\,|\,K,\Zp)$-module $\TP_*(A\,|\,D,\Zp)$ both are free
of rank one generated by the image under
\vspace{-1mm}
$$\begin{xy}
(0,0)*+{ \TC_*(A\,|\,D,\Zp) }="1";
(31,0)*+{ \TC_*^{-}(A\,|\,D,\Zp) }="2";
(63,0)*+{ \TP_*(A\,|\,D,\Zp) }="3";
{ \ar^-{i} "2";"1";};
{ \ar@<.8ex>^-{\varphi} "3";"2";};
{ \ar@<-.8ex>_-{\can} "3";"2";};
\end{xy}$$
of the generator $\tilde{y} \in \TC_0(A\,|\,D,\Zp)$. We first use results
from~\cite{gh1} and~\cite{hm4} to prove structural results about
these groups. 

\begin{lemma}\label{lem:frobeniusequivalence}The Frobenius map
$$\xymatrix{
{ \THH(S\,|\,K,\Zp) } \ar[r]^-{\varphi} &
{ \THH(S\,|\,K,\Zp)^{tC_p} } \cr
}$$
induces an equivalence of connective covers.
\end{lemma}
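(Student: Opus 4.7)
The plan is to reduce the claim, via the localization sequence of Proposition~\ref{prop:thhlocalizationsequence}, to the analogous ``Segal-conjecture''-style assertions for $\THH(S,\Zp)$ and $\THH(k_S,\Zp)$. Applied with $R = S$, which is left regular, the proposition yields a cofiber sequence of $p$-complete cyclotomic spectra
$$\THH'(k_S,\Zp) \longrightarrow \THH(S,\Zp) \longrightarrow \THH(S\,|\,K,\Zp),$$
and since the Tate construction $(-)^{tC_p}$ is an exact functor of stable infinity-categories and the Frobenius is a natural transformation, applying $\varphi$ termwise produces a commutative ladder of cofiber sequences of $p$-complete spectra with residual $\mathbb{T}/C_p$-action.

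For the residue field $k_S$, a finite and hence perfect $\Fp$-algebra, the Frobenius $\varphi \colon \THH(k_S,\Zp) \to \THH(k_S,\Zp)^{tC_p}$ induces an equivalence on connective covers; this is classical, a consequence of B\"{o}kstedt periodicity together with the collapse of the Tate spectral sequence (cf.~\cite[Section~IV.4]{nikolausscholze}), and the target is moreover concentrated in even degrees, so in particular $\pi_{-1}\THH(k_S,\Zp)^{tC_p} = 0$. For $\THH(S,\Zp)$, the corresponding Segal-conjecture-style statement is the substantive input furnished by Hesselholt--Madsen~\cite{hm4}, via an analysis of the Tate spectral sequence and the de~Rham--Witt complex computing $\THH_*(S)$.

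Given these two inputs, the conclusion follows by a short diagram chase. Let $F_A$, $F_B$, $F_C$ denote the fibers of the three vertical Frobenius maps in the ladder above. Connectivity of $\THH'(k_S,\Zp)$ and $\THH(S,\Zp)$, together with the equivalences on connective covers established for them, force $F_A$ and $F_B$ to have homotopy concentrated in degrees $\leq -2$; from the fiber sequence $F_A \to \THH'(k_S,\Zp) \to \THH'(k_S,\Zp)^{tC_p}$ one reads off $\pi_{-2}F_A \cong \pi_{-1}\THH'(k_S,\Zp)^{tC_p} = 0$. The cofiber sequence $F_A \to F_B \to F_C$ then yields an injection $\pi_{-1}F_C \hookrightarrow \pi_{-2}F_A = 0$, so $F_C$ is itself supported in degrees $\leq -2$, which is exactly the statement that $\varphi$ induces an equivalence on connective covers for $\THH(S\,|\,K,\Zp)$. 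The principal obstacle is the Segal-conjecture-style verification for $\THH(S,\Zp)$ extracted from~\cite{hm4}; the rest of the argument is formal.
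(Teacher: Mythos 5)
Your diagram chase is correct, and for the equal characteristic case it is exactly the reduction the paper performs implicitly (``it suffices to show that the Frobenius $\varphi \colon \THH(S) \to \THH(S)^{tC_p}$ induces an equivalence of connective covers''), combined with the $(-1)$-connective-cover equivalence for $\THH(k_S)$. However, the substantive input you cite is wrong in one of the two cases, and this is a genuine gap.

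The ring $S$ is either of mixed characteristic $(0,p)$ or of equal characteristic $p$, and~\cite{hm4} is a paper about local fields of mixed characteristic: the Segal-conjecture-type statement there applies only when $S$ is $p$-torsion-free. (The paper in fact cites \cite[Theorem~5.4.3]{hm4} directly for $\THH(S\,|\,K,\Zp)$, so in mixed characteristic no localization-sequence argument is needed at all.) In the equal characteristic case $S = k_S[\![\pi]\!]$, there is no statement in \cite{hm4} to invoke, and the Frobenius claim for $\THH(S)$ is not formal: the paper establishes it by a hands-on Tate spectral sequence computation. The key inputs are the identification $\THH_*(S) \cong \Omega_S^* \otimes \THH_*(\Fp)$ via \cite[Theorem~B]{h} and Popescu's theorem, B\"{o}kstedt periodicity $\THH_*(\Fp) = \Fp[x]$, and the explicit differentials $d^2(\pi) = t\,d\pi$ and $d^3(u) = t^2 x$ in the Tate spectral sequence $\hat{H}^{-i}(C_p,\THH_j(S)) \Rightarrow \pi_{i+j}(\THH(S)^{tC_p})$, leading to $E^\infty = S^p \otimes \Lambda\{\pi^{p-1}d\pi\} \otimes \Fp[t^{\pm 1}]$ and an isomorphism on $\pi_j$ for $j \geqslant 0$. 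Without this computation, or a suitable replacement, the equal characteristic half of the lemma is unproved. So you should split into cases: in mixed characteristic cite \cite[Theorem~5.4.3]{hm4} either directly for $\THH(S\,|\,K)$ or for $\THH(S)$ and then run your chase; in equal characteristic supply the Tate spectral sequence computation of $\varphi$ on $\THH(k_S[\![\pi]\!])$ before applying your chase.
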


\begin{proof}If $S$ is of mixed characteristic, then this is proved
in~\cite[Theorem~5.4.3]{hm4}. We therefore assume that
$S = k_S[\![\pi]\!]$ is of equal characteristic. Since the
Frobenius $\varphi \colon \THH(k_S) \to \THH(k_S)^{tC_p}$ induces
an equivalence of $(-1)$-connective covers, it suffices to show that
the Frobenius $\varphi \colon \THH(S) \to \THH(S)^{tC_p}$ induces an
equivalence of connective covers. Now, by~\cite[Theorem~B]{h}
and~\cite{popescu}, the canonical map 
$$\xymatrix{
{ \Omega_S^* \otimes \THH_*(\Fp) } \ar[r] &
{ \THH_*(S) } \cr
}$$
is an isomorphism. Since $k_S$ is perfect, $\Omega_S^*$ is an exterior
algebra over $S$ on a generator $d\pi$ of degree $1$, and
$\THH_*(\Fp)$ is a polynomial algebra over $\Fp$ on a generator $x$ of
degree $2$ by B\"{o}kstedt periodicity. Hence, in the Tate spectral sequence
$$E_{i,j}^2 = \hat{H}^{-i}(C_p,\THH_j(S)) \Rightarrow
\pi_{i+j}(\THH(S)^{tC_p}),$$
we have $E^2 = S \otimes \Lambda\{u,d\pi\} \otimes \Fp[t^{\pm 1},x]$
with $\deg(u) = (-1,0)$, $\deg(d\pi) = (0,1)$, $\deg(t) = (-2,0)$, and
$\deg(x) = (0,2)$. The $d^2$-differential is a derivation, which is
automatically $S^p$-linear, and is given by $d^2(\pi) = td\pi$, so
the $E^3$-term takes the form 
$$E^3 = S^p \otimes \Lambda\{u,\pi^{p-1}d\pi\} \otimes
\Fp[t^{\pm1},x].$$
The Frobenius $\varphi$ maps the class $a \in S$ (resp.~$d\pi$,
resp. $x$) to the class represented in the spectral sequence by
$a^p \in S^p$ (resp.~by $\pi^{p-1}d\pi$, resp.~by $t^{-1}$), which
therefore is an infinite cycle. 
%
%
%
%
Moreover, comparing with the spectral
sequence for $\THH(\Fp)$, we see that also $x$ is an infinite cycle
and that, up to a unit in $\Fp$,
$$d^3(u) = t^2x.$$
Hence, all further differentials in the spectral sequence are zero, and
$$E^{\infty} = S^p \otimes \Lambda\{\pi^{p-1}d\pi\} \otimes
\Fp[t^{\pm1}].$$
In particular, the Frobenius $\varphi \colon \THH(S) \to \THH(S)^{tC_p}$
induces an isomorphism of homotopy groups in degrees $j \geqslant 0$,
as desired.
\end{proof}

\begin{corollary}\label{cor:frobeniusequivalence}The Frobenius map
induces an equivalence of connective covers
$$\xymatrix{
{ \TF(S\,|\,K;p,\Zp) } \ar[r] &
{ \TC^{-}(S\,|\,K,\Zp). } \cr
}$$
\end{corollary}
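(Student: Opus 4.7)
The plan is to construct a natural map $\TF(X;p) \to X^{h\mathbb{T}} = \TC^{-}(X)$ for $X = \THH(S\,|\,K,\Zp)$ by assembling the projections $\TR^n(X;p) \to X^{hC_{p^{n-1}}}$ onto the last factor of the iterated pullback description of $\TR^n$ recalled in Section~\ref{sec:localization}, and then to show by induction on $n \geqslant 1$ that each such projection is an equivalence of connective covers. Passing to the homotopy inverse limit along the Frobenius maps $F$ will then yield the corollary.

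For the inductive step, I would use the defining pullback square
$$\TR^n(X;p) \simeq \TR^{n-1}(X;p) \times_{X^{tC_{p^{n-1}}}} X^{hC_{p^{n-1}}},$$
whose bottom horizontal map factors as the last-factor projection $\TR^{n-1}(X;p) \to X^{hC_{p^{n-2}}}$ followed by the map $\varphi^{hC_{p^{n-2}}} \colon X^{hC_{p^{n-2}}} \to (X^{tC_p})^{hC_{p^{n-2}}} \simeq X^{tC_{p^{n-1}}}$ obtained from the cyclotomic Frobenius $\varphi \colon X \to X^{tC_p}$ by taking $C_{p^{n-2}}$-homotopy fixed points and applying the Tate orbit lemma to identify the target. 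By Lemma~\ref{lem:frobeniusequivalence}, the fiber of $\varphi$ is $(-1)$-coconnective, and because the functor $(-)^{hC_{p^{n-2}}}$ preserves the property of being $(-1)$-coconnective for the finite group $C_{p^{n-2}}$ (by strong convergence of the homotopy fixed points spectral sequence in this range), the fiber of $\varphi^{hC_{p^{n-2}}}$ is also $(-1)$-coconnective. Combining this with the inductive hypothesis that $\TR^{n-1}(X;p) \to X^{hC_{p^{n-2}}}$ is an equivalence of connective covers, the bottom arrow of the pullback square is an equivalence of connective covers; since the square is cartesian, the same is then true of the top arrow.

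Lastly, one passes to the inverse limit in $n$. The Frobenius $F \colon \TR^n(X;p) \to \TR^{n-1}(X;p)$ corresponds under the last-factor projections to the standard restriction $X^{hC_{p^{n-1}}} \to X^{hC_{p^{n-2}}}$, producing a map $\TF(X;p) \to \holim_n X^{hC_{p^{n-1}}}$, and for the $p$-complete spectrum $X$ this target is canonically identified with $X^{h\mathbb{T}} = \TC^{-}(X)$. The main obstacle I anticipate is justifying that connective-cover equivalences are preserved by this inverse limit: this uses that the fibers of each $\TR^n(X;p) \to X^{hC_{p^{n-1}}}$ are uniformly $(-1)$-coconnective, so that neither $\lim$ nor $\lim^1$ over the Frobenius tower contributes to non-negative homotopy groups, together with the identification $\holim_n X^{hC_{p^n}} \simeq X^{h\mathbb{T}}$ valid for bounded-below $p$-complete $X$.
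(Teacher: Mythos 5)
Your proposal follows the same route as the paper's proof: assemble the last-factor projections $\pr_n \colon \TR^n(X;p) \to X^{hC_{p^{n-1}}}$ into a map $\TF(X;p) \to \TC^-(X)$, argue inductively on $n$ that each $\pr_n$ is an equivalence of connective covers using the iterated pullback description and Lemma~\ref{lem:frobeniusequivalence}, and pass to the homotopy limit. The paper compresses the inductive step and the limit argument into a citation of the proof of~\cite[Theorem~3]{krause}; you attempt to spell it out. That is fine, but in doing so you leave a genuine (small) gap.

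The gap is in the inductive step. You correctly observe that the fiber of $\varphi^{hC_{p^{n-2}}}$ is $(-1)$-coconnective, since it is $(\mathrm{fib}\,\varphi)^{hC_{p^{n-2}}}$ and homotopy fixed points preserve $(-1)$-coconnectivity. But ``fiber is $(-1)$-coconnective'' only gives that $\varphi^{hC_{p^{n-2}}}$ is an isomorphism on $\pi_j$ for $j \geqslant 1$ and \emph{injective} on $\pi_0$; it does not give surjectivity on $\pi_0$. Consequently, the composite $g = \varphi^{hC_{p^{n-2}}} \circ \pr_{n-1}$ is only shown to be an isomorphism on $\pi_j$ for $j \geqslant 1$ and injective on $\pi_0$, which is strictly weaker than ``equivalence of connective covers.'' (The distinction matters: the paper uses the $\pi_0$-isomorphism in the proof of Theorem~\ref{thm:theoremB}~(2).) The same issue recurs silently in your final passage to the limit, where ``fibers uniformly $(-1)$-coconnective'' again yields only the injectivity half on $\pi_0$.

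The gap is fillable: what you actually need, and can prove, is that $\varphi^{hG}$ is itself a connective-cover equivalence (not merely that its fiber is $(-1)$-coconnective). Apply $(-)^{hG}$ to the fiber sequences $\tau_{\geqslant 0}X \to X \to \tau_{\leqslant -1}X$ and $\tau_{\geqslant 0}X^{tC_p} \to X^{tC_p} \to \tau_{\leqslant -1}X^{tC_p}$. Since $(\tau_{\leqslant -1}Y)^{hG}$ is $(-1)$-coconnective, the long exact sequences give $\pi_j(\tau_{\geqslant 0}Y)^{hG} \cong \pi_j(Y^{hG})$ for all $j \geqslant 0$, and since $\tau_{\geqslant 0}\varphi$ is an equivalence by Lemma~\ref{lem:frobeniusequivalence}, it follows that $\pi_j\varphi^{hG}$ is an isomorphism for $j \geqslant 0$. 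With this strengthening, your inductive step and the Milnor-sequence argument at the limit stage both go through as intended, and the proof is in substance the one the paper attributes to Krause.
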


\begin{proof}For every cyclotomic spectrum $X$, we have the canonical
projection
$$\xymatrix{
{ \TR^n(X;p) = X \times_{\textstyle{X^{tC_p}}} X^{hC_p} 
\times_{\textstyle{X^{tC_{\smash{p^2}}}}} \cdots
\times_{\textstyle{X^{tC_{\smash{p^{n-1}}}}}} X^{hC_{\smash{p^{n-1}}}}
} \ar[r]^-{\pr_n} & 
{ X^{hC_{\smash{p^{n-1}}}} } \cr
}$$
and there are commutative diagrams
$$\begin{xy}
(0,7)*+{ \TR^n(X;p) }="11";
(30,7)*+{ X^{hC_{\smash{p^{n-1}}}} }="12";
(0,-7)*+{ \TR^{n-1}(X;p) }="21";
(30,-7)*+{ X^{hC_{\smash{p^{n-2}}}} }="22";
{ \ar^-{\pr_n} "12";"11";};
{ \ar^-{F} "21";"11";};
{ \ar "22";"12";};
{ \ar^-{\pr_{n-1}} "22";"21";};
\end{xy}$$
in which the horizontal maps are the canonical projections and the
right-hand vertical maps are the map induced by the subgroup
inclusions $C_{\smash{p^{n-2}}} \subset C_{\smash{p^{n-1}}}$. Hence,
the canonical projections induce a map of homotopy limits
$$\xymatrix{
{ \TF(X;p) = \holim_{n,F} \TR^n(X;p) } \ar[r] &
{ \holim_n X^{hC_{\smash{p^{n-1}}}}. } \cr
}$$
Moreover, if $X$ is $p$-complete, then the map
$$\xymatrix{
{ \holim_n X^{hC_{\smash{p^{n-1}}}} } &
{ X^{h\mathbb{T}} = \TC^{-}(X), } \ar[l] \cr
}$$
induced by the subgroup inclusions
$C_{\smash{p^{n-1}}} \subset \mathbb{T}$ is an equivalence, so we get
a map
$$\xymatrix{
{ \TF(X;p) } \ar[r] &
{ \TC^{-}(X). } \cr
}$$
As pointed out in the proof of~\cite[Theorem~3]{krause}, it is clear from this
construction that if the Frobenius $\varphi \colon X \to X^{tC_p}$
induces an isomorphism of homotopy groups in degrees $j \geqslant d$,
then so do the maps $\pr_n$ and map above. So by taking $X$ to be the
$p$-complete cyclotomic spectrum $\THH(S\,|\,K,\Zp)$, the corollary
follows from Lemma~\ref{lem:frobeniusequivalence}.
\end{proof}

\begin{lemma}\label{lem:wittvectors}The canonical projection
$i \colon S \to k_S$ induces an isomorphism
$$\begin{xy}
(0,0)*+{ \lim_{n,F} W_n(S) }="1";
(30,0)*+{ \lim_{n,F} W_n(k_S). }="2";
{ \ar "2";"1";};
\end{xy}$$
\end{lemma}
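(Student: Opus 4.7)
The plan is to treat the equal and mixed characteristic cases separately. In both cases the key is to analyze $F$-compatible systems of Witt vectors via their coordinates, but the equal characteristic case admits a direct combinatorial argument, while mixed characteristic requires both a valuation argument (for injectivity) and a $\lim^1$-vanishing argument (for surjectivity).

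If $S = k_S[\![\pi_S]\!]$ is of equal characteristic, then the Witt-vector Frobenius $F \colon W_{n+1}(S) \to W_n(S)$ acts coordinate-wise by $(a_0,\ldots,a_n) \mapsto (a_0^p,\ldots,a_{n-1}^p)$, so the entries of any $F$-compatible system $(a^{(n)})$ admit compatible $p$-power roots in $S$. Since $k_S$ is perfect while $\pi_S$ has no $p$-th root in $S$, a direct computation shows $\bigcap_{k \geq 0} S^{p^k} = k_S$. Hence all coordinates already lie in $k_S \subset S$, and the inclusion $k_S \hookrightarrow S$ gives a two-sided inverse to $i_*$.

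In mixed characteristic, $S$ is $p$-torsion-free, so the ghost map $W_n(S) \hookrightarrow S^n$ is injective and $F$ acts by shifting ghost coordinates. For injectivity, if $(a^{(n)}) \in \ker(i_*)$ then every Witt coordinate of every $a^{(n)}$ lies in $\mathfrak{m}_S$; writing the $j$-th ghost coordinate of $a^{(m)}$ as $w_{n-m+j}(a^{(n)}) = \sum_{i=0}^{n-m+j} p^i (a_i^{(n)})^{p^{n-m+j-i}}$ and bounding its valuation below by $\min_i(ei + p^{n-m+j-i})$ with $e = v_S(p)$, one sees that this minimum tends to infinity with $n$, forcing the ghost coordinate to vanish. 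Torsion-freeness then gives $a^{(m)} = 0$.

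For surjectivity in mixed characteristic, I consider the short exact sequence of inverse systems under $F$,
$$0 \to W_n(\mathfrak{m}_S) \to W_n(S) \to W_n(k_S) \to 0,$$
where $W_n(\mathfrak{m}_S) := \ker(W_n(i))$ is preserved by $F$ since $F$ is a ring homomorphism. Because $S$ is a complete discrete valuation ring with finite residue field, $W_n(S) \cong S^n$ is profinite as a topological space, so each $W_n(\mathfrak{m}_S)$ is a profinite abelian group with continuous $F$-transition maps. Since $\lim^1$ vanishes on inverse systems of profinite abelian groups, the induced six-term exact sequence collapses to give the required surjectivity. The main obstacle, I expect, is this surjectivity step in mixed characteristic: one must carefully verify that $F$ preserves the kernel system $\{W_n(\mathfrak{m}_S)\}$ and invoke a suitable $\lim^1$-vanishing statement for profinite inverse systems.
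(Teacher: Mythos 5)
Your proof is correct, and it arrives at the result by a route that differs from the paper's in several places; the comparison is instructive. The paper gives a single argument covering both characteristics: after reducing injectivity and surjectivity to the vanishing of $\lim_{n,F} W_n(\mathfrak{m}_S)$ and $R^1\lim_{n,F} W_n(\mathfrak{m}_S)$ respectively, it invokes the congruence $F(a)\equiv R(a)^p \pmod{W_{n-1}(pS)}$ (cited from~\cite[Lemma~1.8]{h9}) to conclude that $F(W_n(\mathfrak{m}_S^m))\subset W_{n-1}(\mathfrak{m}_S^{m+1})$, so the Witt coordinates of an $F$-compatible system at any fixed level lie in $\bigcap_m\mathfrak{m}_S^m=0$, giving $\lim=0$; the $R^1\lim$ vanishes by compactness, exactly as you argue. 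Your equal-characteristic argument is the same as the paper's specialized to $p=0$ in $S$: the observation $\bigcap_k S^{p^k}=k_S$ packages both injectivity and surjectivity in one step, which is slightly more economical because it bypasses the $\lim^1$ reduction in that case. Your mixed-characteristic injectivity argument via ghost coordinates and the valuation bound $v_S\bigl(w_{N}(a^{(n)})\bigr)\geqslant\min_i(ei+p^{N-i})\to\infty$ is a genuinely different proof from the paper's; it is more explicit but relies on injectivity of the ghost map, which fails in equal characteristic --- that is precisely what forces your case split, whereas the paper's Frobenius-congruence argument sidesteps the ghost map entirely and so works uniformly. The surjectivity step (profinite $\lim^1$-vanishing applied to the kernel system, which is an inverse system under $F$ by naturality) coincides with the paper's. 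In short: your proof is valid, a bit longer because of the case split, and trades the clean Frobenius congruence for a hands-on valuation estimate.
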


\begin{proof}Let $\mathfrak{m}_S \subset S$ be the maximal ideal. The
injectivity and the surjectivity of the map in the statement are
equivalent to the vanishing of the limit
$\lim_{n,F}W_n(\mathfrak{m}_S)$ and the derived limit
$R^1\lim_{n,F}W_n(\mathfrak{m}_S) = 0$, respectively. We recall that
the Witt vector Frobenius $F \colon W_n(S) \to W_{n-1}(S)$ satisfies
$F(a) = R(a)^p$ modulo $W_{n-1}(pS)$; see for
example~\cite[Lemma~1.8]{h9}. It follows that
$F(W_n(\mathfrak{m}_S^m)) \subset W_{n-1}(\mathfrak{m}_S^{m+1})$, for
all $m > 0$, which shows that the limit vanishes. The derived limit
vanishes, since $W_n(\mathfrak{m}_S)$ has a compact
topology for which $F \colon W_n(\mathfrak{m}_S) \to
W_{n-1}(\mathfrak{m}_S)$ is continuous.
\end{proof}

Since $k_S$ is perfect, we further identify the common ring in
Lemma~\ref{lem:wittvectors} with the ring of Witt vectors $W(k_s)$ via
the isomorphism 
$$\xymatrix{
{ \lim_{n,F} W_n(k_S) } \ar[r] &
{ \lim_{n,R} W_n(k_S) = W(k_S) } \cr
}$$
that at level $n$ is given by the map $W_n(k_S) \to W_n(k_S)$ induced
by $\varphi^n \colon k_S \to k_S$. It is an isomorphism, since $k_S$
is perfect. In particular, the ring
$$\textstyle{ \TF_0(S\,|\,K;p,\Zp) = \lim_{n,F}W_n(S) }$$
is an integral domain.

\begin{theorem}\label{thm:evenisomorphism}For every even integer
$j = 2k > 0$, the map 
\vspace{-1mm}
$$\begin{xy}
(0,0)*+{ \TC_{\smash{j}}^{-}(S\,|\,K,\Zp) }="1";
(33,0)*+{ \TP_j(S\,|\,K,\Zp) }="2";
{ \ar^-{\varphi - \can} "2";"1";};
\end{xy}$$
is an isomorphism.
\end{theorem}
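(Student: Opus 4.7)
The plan is to first promote Lemma~\ref{lem:frobeniusequivalence} to an isomorphism statement on $\TC^{-}$ vs.\ $\TP$, then convert the theorem into an equivalent statement on $\TF$ using the identifications of the previous sections, and finally verify it by analyzing the structure of the low-degree homotopy groups. Setting $V = \THH(S\,|\,K,\Zp)$, I would first apply $(-)^{h\mathbb{T}}$ (viewing the target with its residual $\mathbb{T}/C_p$-action and identifying $(V^{tC_p})^{h\mathbb{T}/C_p} \simeq V^{t\mathbb{T}}$ via the Tate orbit lemma \cite[Lemma~I.2.1]{nikolausscholze}) to the equivalence of connective covers $\varphi \colon V \to V^{tC_p}$ of Lemma~\ref{lem:frobeniusequivalence}. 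Comparing the two homotopy fixed points spectral sequences---both converge strongly, and only $E_2^{s,t}$-terms with $s \geq 0$ and $t = n+s \geq n \geq 0$ contribute to $\pi_n$---I obtain that $\varphi \colon \TC^{-}_n(S\,|\,K,\Zp) \to \TP_n(S\,|\,K,\Zp)$ is an isomorphism for every $n \geq 0$.

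Combined with Corollary~\ref{cor:frobeniusequivalence}, the connective covers of $\TF(S\,|\,K;p,\Zp)$, $\TC^{-}(S\,|\,K,\Zp)$, and $\TP(S\,|\,K,\Zp)$ are then canonically identified via the Frobenius. Under these identifications, the cofiber sequence $\TC \to \TC^{-} \xrightarrow{\varphi - \can} \TP$ corresponds, in non-negative degrees, to the homotopy equalizer description $\TC \to \TF \xrightarrow{\varphi^{-1} - \id} \TF$ used in the proof of Theorem~\ref{thm:theoremB}(1). The theorem is therefore equivalent to the statement that $\varphi^{-1} - \id$ is an isomorphism on $\TF_{2k}(S\,|\,K;p,\Zp)$ for every $k \geq 1$. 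Writing out the long exact sequence, this is in turn equivalent to the pair of statements that $\TC_{2k} \to \TF_{2k}$ is zero and that $\TC_{2k-1} \to \TF_{2k-1}$ is injective.

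To conclude, I would appeal to the explicit computations of Hesselholt--Madsen \cite{hm4} and Geisser--Hesselholt \cite{gh1}, together with the equal-characteristic spectral sequence computation already carried out in the proof of Lemma~\ref{lem:frobeniusequivalence}, which show that $\TF_{2k}(S\,|\,K;p,\Zp)$ is a $p$-complete torsion-free $W(k_S) = \TF_0$-module for $k \geq 1$, while $\TC_{2k}(S\,|\,K,\Zp) \cong K_{2k}(K,\Zp)$ is a finite group---forcing the zero map. The main obstacle is this torsion-freeness assertion, and in particular its rigorous verification in the mixed-characteristic case, where it requires careful tracking of differentials in the restriction and Frobenius towers for the de~Rham--Witt spectral sequence; the corresponding injectivity in odd degrees then follows from matching the $\Zp$-rank contributions, which are determined by the tower of Witt vector rings $\TF_0 = W(k_S)$ identified in Lemma~\ref{lem:wittvectors}.
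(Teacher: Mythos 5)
Your opening step --- passing the equivalence of connective covers $\varphi\colon \THH(S\,|\,K,\Zp)\to \THH(S\,|\,K,\Zp)^{tC_p}$ through $(-)^{h\mathbb{T}}$ and the Tate orbit lemma to conclude that $\varphi\colon\TC_j^{-}\to\TP_j$ is an isomorphism for $j\geqslant 0$ --- is exactly the first step of the paper's proof. After that, however, the approaches diverge, and yours has a genuine gap.

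You reduce the statement to showing that $\varphi^{-1}-\id$ is an isomorphism on $\TF_{2k}$ for $k\geqslant 1$, which you further reduce to two claims: that $\TC_{2k}\to\TF_{2k}$ is zero, and that $\TC_{2k-1}\to\TF_{2k-1}$ is injective. The reduction itself is sound. The problem is that neither claim is actually established. You propose to get the first from the assertion that $\TF_{2k}(S\,|\,K;p,\Zp)$ is a torsion-free $W(k_S)$-module while $\TC_{2k}(S\,|\,K,\Zp)\cong K_{2k}(K,\Zp)$ is finite, but you yourself flag the torsion-freeness as the "main obstacle" requiring "careful tracking of differentials," and the injectivity in odd degrees is only said to "follow from matching $\Zp$-rank contributions." Neither of these is a proof, and the odd-degree injectivity in particular is far from automatic; an injective self-map of a $p$-adically complete module need not be surjective without additional finiteness input, and the ranks you would need to "match" are precisely what a complete argument would have to compute.

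There is also a more pointed issue: in the mixed-characteristic case, where you anticipate the torsion-freeness argument being delicate, the groups in question actually vanish. By~\cite[Remark~2.4.2]{hm4}, $\THH_j(S\,|\,K,\Zp)=0$ for $j=2k>0$, and then the homotopy fixed points spectral sequence (in which only even-degree $\THH$-groups can contribute to an even-degree $\TC^{-}$-group) forces $\TC_{2k}^{-}(S\,|\,K,\Zp)=0$ for $k>0$, whence also $\TF_{2k}=0$ by Corollary~\ref{cor:frobeniusequivalence}. So the statement you are worried about proving carefully is trivially true, and the theorem is trivial in this case: $\varphi-\can$ is a map between zero groups. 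The paper exploits exactly this, and in the equal-characteristic case it avoids the $\TF$-detour altogether by working directly with $\TP_*(S\,|\,K)$ as a $2$-periodic module over $\TP_*(\Fp)=\Zp[v^{\pm1}]$, identifying $\varphi$ with an isomorphism and $\can$ with $p^k$ times an isomorphism on $\TP_{2k}$, from which $\varphi-\can$ is an isomorphism for $k>0$ by a unit argument in the local ring $W(k_S)$. That direct computation is both shorter and self-contained, and it does not import the $K$-theory of $K$ (which your route does, via the identification $\TC_{2k}\cong K_{2k}(K,\Zp)$ and the external finiteness of the latter).
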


\begin{proof}It follows from
  Lemma~\ref{lem:frobeniusequivalence} and 
the Tate orbit lemma~\cite[Lemma~I.2.1]{nikolausscholze} that
\vspace{-1mm}
$$\begin{xy}
(0,0)*+{ \TC_{\smash{j}}^{-}(S\,|\,K,\Zp) }="1";
(32,0)*+{ \TP_j(S\,|\,K,\Zp) }="2";
{ \ar^-{\varphi} "2";"1";};
\end{xy}$$
is an isomorphism, for $j \geqslant 0$. We evaluate the common group
for $j = 2k > 0$, and consider the cases, where $S$ is of equal
characteristic and of mixed characteristic, separately. 

In the mixed characteristic case, the groups
$\THH_j(S\,|\,K,\mathbb{Z}_p)$ for $j =2k > 0$ are zero
by~\cite[Remark~2.4.2]{hm4}. Hence, the homotopy fixed points
spectral sequence
$$E_{i,j}^2 =
H^{-i}(B\hskip.5pt\mathbb{T},\THH_j(S\,|\,K,\mathbb{Z}_p)) \Rightarrow
\TC_{i+j}^{-}(S\,|\,K,\mathbb{Z}_p)$$
shows that the same is true for the groups
$\TC_j^{-}(S\,|\,K,\mathbb{Z}_p)$.

If $S = k_S[\![\pi]\!]$ is of equal characteristic, then
$\TP_*(S\,|\,K)$ is a graded algebra over the graded ring
$\TP_*(\Fp) = \Zp[v^{\pm1}]$, where $\deg(v) = -2$, and hence, is
$2$-periodic. We have canonical isomorphisms
$$\xymatrix{
{ \TP_0(S\,|\,K) } &
{ \TC_0^{-}(S\,|\,K) } \ar[l]_-{\can} &
{ \TF_0(S\,|\,K;p) } \ar[l] \ar[r] &
{ W(k_S), } \cr
}$$
where the latter follows from Lemma~\ref{lem:wittvectors}. Under this
identification, the two maps $\varphi, \can \colon
\TC_0^{-}(S\,|\,K) \to \TP_0(S\,|\,K)$ are given by the automorphisms
of $W(k_s)$ induced by $\varphi,\id \colon k_S \to k_S$,
respectively. Finally, the maps  
$\varphi,\can \colon \TC_2^{-}(\Fp) \to \TP_2(\Fp)$ are given by
$\varphi(v^{-1}) = v^{-1}$ and $\can(v^{-1}) = pv^{-1}$;
see~\cite[Section~IV.4]{nikolausscholze}. Hence, we find that for
$j = 2k \geqslant 0$, the maps
$$\begin{xy}
(0,0)*+{ \TC_j^{-}(S\,|\,K) }="1";
(28,0)*+{ \TP_j(S\,|\,K) }="2";
{ \ar^-{\varphi, \; \can} "2";"1";};
\end{xy}$$
are respectively an isomorphism and $p^k$ times an isomorphism, so for
$j = 2k > 0$, their difference is an isomorphism, as stated.
\end{proof}

\begin{proof}[Proof of Theorem~B~{\rm(2)}]We consider the diagram of spectra
$$\begin{xy}
(0,7)*+{ \TF(A\,|\,D;p,\Zp) }="11";
(34,7)*+{ \TC^{-}(A\,|\,D,\Zp)\phantom{,} }="12";
(0,-7)*+{ \TF(S\,|\,K;p,\Zp) }="21";
(34,-7)*+{ \TC^{-}(S\,|\,K,\Zp), }="22";
{ \ar "12";"11";};
{ \ar_-{\Ird_{A/S}} "11";"21";};
{ \ar "22";"21";};
{ \ar_-{\Ird_{A/S}} "12";"22";};
\end{xy}$$
where the vertical maps are induced by the equivalence of cyclotomic
spectra from part~(1) of the theorem, and where the horizontal maps
are the maps defined in the proof of
Corollary~\ref{cor:frobeniusequivalence}. The diagram commutes by
naturality of the horizontal maps, the vertical maps are equivalences,
and, by Corollary~\ref{cor:frobeniusequivalence}, the horizontal maps
induce equivalences of connective covers. The class
$y = \Ird_{A/S}(1) \in \TC_0^{-}(A\,|\,D,\Zp)$ defines a component in
the mapping space 
$$\begin{aligned}
{} & \Map_{\operatorname{Mod}_{\THH(S\,|\,K,\Zp)}
(\mathsf{Sp}_p^{B\,\mathbb{T}})} (\THH(S\,|\,K,\Zp),
\THH(A\,|\,D,\Zp)) \cr
{} & \simeq
\Map_{\mathsf{Sp}_p^{B\,\mathbb{T}}}
(\mathbb{S}_p,\THH(A\,|\,D,\Zp))
\simeq \Omega^{\infty}\TC^{-}(A\,|\,D,\Zp) \cr
\end{aligned}$$
and any point in this component is an equivalence. Now, let $I_{A/S}$
be the map induced by extension of scalars along the inclusion of $S$
in $A$. It follows from Remark~\ref{rem:reducedtrace}
and from the construction of the class $y$ that $I_{A/S}(1) = d \cdot
y$. This shows that
$$I_{A/S} \simeq d \cdot \Ird_{A/S}$$
as maps of $\THH(S\,|\,K,\Zp)$-modules in
$\mathsf{Sp}_p^{B\,\mathbb{T}}$. We claim that this implies that also
$$\Tr_{A/S} \simeq d \cdot \Trd_{A/S}$$
as maps of $\THH(S\,|\,K,\Zp)$-modules in
$\mathsf{Sp}_p^{B\,\mathbb{T}}$. Indeed,
$$d \cdot (\Tr_{A/S} \circ \Ird_{A/S}) \simeq \Tr_{A/S} \circ (d \cdot
\Ird_{A/S}) \simeq \Tr_{A/S} \circ I_{A/S} \simeq d^2 \cdot \id,$$
which implies that
$$\Tr_{A/S} \circ \Ird_{A/S} \simeq d \cdot \id,$$
since $\TC_0^{-}(S\,|\,K,\Zp) = W(k_S)$ is an integral domain. But we
also have
$$(d \cdot \Trd_{A/S}) \circ \Ird_{A/S} \simeq d \cdot (\Trd_{A/S}
\circ \Ird_{A/S}) \simeq d \cdot \id,$$
and since $\Ird_{A/S}$ is an equivalence, the claim follows. This
completes the proof.
\end{proof}

\begin{remark}\label{rem:uniquenessoftrd}The component in the mapping
space
$$\Map_{\operatorname{Mod}_{\THH(S\,|\,K,\Zp)}(\mathsf{Sp}_p^{B\,\mathbb{T}})}
(\THH(A\,|\,D,\Zp),\THH(S\,|\,K,\Zp))$$
that contains the equivalence $\Trd_{A/S}$ is uniquely determined by
the property that
$$d \cdot \Trd_{A/S} \simeq \Tr_{A/S}.$$
Indeed, the mapping space in question is simultaneously a torsor for
the $\mathbb{E}_{\infty}$-monoids of endomorphisms of the domain and
target, and their rings of components are both integral domains, being
isomorphic to $W(k_S)$. 
\end{remark}

\begin{proof}[Proof of Theorem~\ref{thm:theoremA}]For all integers $j$,
the equivalence of cyclotomic spectra $\Trd_{A/S}$ in
Theorem~\ref{thm:theoremB}~(1) induces an isomorphism
\vspace{-1mm}
$$\begin{xy}
(0,0)*+{ \TC_j(A\,|\,D,\Zp) }="1";
(33,0)*+{ \TC_j(S\,|\,K,\Zp), }="2";
{ \ar^-{\Trd_{A/S}} "2";"1";};
\end{xy}$$
and for $j \geqslant 1$, this induces the desired isomorphism
\vspace{-1mm}
$$\begin{xy}
(0,0)*+{ K_j(D,\Zp) }="1";
(27,0)*+{ K_j(K,\Zp), }="2";
{ \ar^-{\Nrd_{A/S}} "2";"1";};
\end{xy}$$
by Theorem~\ref{thm:KofD}. We claim that for $j \geqslant 1$, the former
isomorphism is canonical and satisfies
$d \cdot \Trd_{A/S} = \Tr_{A/S}$. Indeed, by
Theorem~\ref{thm:evenisomorphism} we have exact sequences  
$$\begin{xy}
(0,7)*+{ 0 }="11";
(0,-7)*+{ 0 }="21";
(22,7)*+{ \TC_j(A\,|\,D,\Zp) }="12";
(22,-7)*+{ \TC_j(S\,|\,K,\Zp) }="22";
(55,7)*+{ \TC_j^{-}(A\,|\,D,\Zp) }="13";
(55,-7)*+{ \TC_j^{-}(S\,|\,K,\Zp) }="23";
(90,7)*+{ \TP_j(A\,|\,D,\Zp)\phantom{,} }="14";
(90,-7)*+{ \TP_j(S\,|\,K,\Zp), }="24";
{ \ar "12";"11";};
{ \ar^-{i} "13";"12";};
{ \ar^-{\varphi - \can} "14";"13";};
{ \ar^-{\Trd_{A/S}} "22";"12";};
{ \ar^-{\Trd_{A/S}} "23";"13";};
{ \ar^-{\Trd_{A/S}} "24";"14";};
{ \ar "22";"21";};
{ \ar^-{i} "23";"22";};
{ \ar^-{\varphi - \can} "24";"23";};
\end{xy}$$
for $j \geqslant 1$ and odd, and by Theorem~\ref{thm:theoremB}~(2), the
middle and right-hand vertical maps are canonical and satisfy the
desired identity. Theorem~\ref{thm:evenisomorphism} shows similarly
that for $j \geqslant 2$ and even, there are exact sequences
$$\begin{xy}
(0,7)*+{ \TC_{j+1}^{-}(A\,|\,D,\Zp) }="11";
(0,-7)*+{ \TC_{j+1}^{-}(S\,|\,K,\Zp) }="21";
(38,7)*+{ \TP_{j+1}(A\,|\,D,\Zp)\phantom{,} }="12";
(38,-7)*+{ \TP_{j+1}(S\,|\,K,\Zp), }="22";
(73,7)*+{ \TC_j(A\,|\,D,\Zp) }="13";
(73,-7)*+{ \TC_j(S\,|\,K,\Zp) }="23";
(95,7)*+{ 0\phantom{,} }="14";
(95,-7)*+{ 0, }="24";
{ \ar^-{\varphi-\can} "12";"11";};
{ \ar^-{\partial} "13";"12";};
{ \ar "14";"13";};
{ \ar^-{\Trd_{A/S}} "21";"11";};
{ \ar^-{\Trd_{A/S}} "22";"12";};
{ \ar^-{\Trd_{A/S}} "23";"13";};
{ \ar^-{\varphi-\can} "22";"21";};
{ \ar^-{\partial} "23";"22";};
{ \ar "24";"23";};
\end{xy}$$
and by Theorem~\ref{thm:theoremB}~(2), the left-hand and middle vertical
maps are canonical and satisfy the desired identity. This completes
the proof.
\end{proof}

\begin{lemma}\label{lem:pushforward}The map
$i_* \colon \TC_j(k_S,\mathbb{Z}_p) \to \TC_j(S,\mathbb{Z}_p)$ is zero
for all integers $j$.
\end{lemma}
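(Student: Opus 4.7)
The plan is to show that $i_*$ is induced by a null-homotopic map of symmetric spectra, by using the Koszul resolution together with Waldhausen's additivity theorem. This parallels the classical fact that the class $[k_S]$ vanishes in $K_0(S) = \mathbb{Z}$, which follows from the two-term projective resolution $0 \to S \xrightarrow{\pi_S} S \to k_S \to 0$, where $\pi_S \in S$ is a uniformizer.

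Concretely, every finitely generated $k_S$-module $V \cong k_S^n$ admits the analogous projective resolution $[S^n \xrightarrow{\pi_S} S^n]$ over $S$. I would organize these resolutions into an exact functor
\[
F \colon \mathcal{P}_{k_S} \to \mathsf{Ch}^b(\mathcal{P}_S, \mathcal{M}_S, \mathcal{T}_S), \qquad V \mapsto [S^n \xrightarrow{\pi_S} S^n],
\]
noting that by Remark~\ref{rem:artinian} and the regularity of $S$, the target's $\TC$ is canonically identified with $\TC(S)$, and the composition along $F$ recovers $i_* \colon \TC'(k_S) \to \TC(S)$ (together with its $K$-theoretic counterpart). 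The Koszul complex fits into a short exact sequence of exact functors
\[
0 \to F_0 \to F \to F_1 \to 0,
\]
where $F_0(V) = [0 \to S^n]$ places the free lift in homological degree $0$, and $F_1(V) = [S^n \to 0]$ places it in degree $1$. Waldhausen's additivity theorem and its counterpart for topological cyclic homology \cite[Proposition~2.0.4]{dundasmccarthy} then yield the identity $[F] \simeq [F_0] + [F_1]$ on the level of symmetric spectra. Since shifting a chain complex by one degree induces multiplication by $-1$ on the $K$-theory and $\TC$ of $\mathsf{Ch}^b(\mathcal{P}_S)$ (a consequence of the cone cofibration sequence together with additivity), we have $[F_1] \simeq -[F_0]$ and hence $[F] \simeq 0$. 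After $p$-completion, $i_* \colon \TC(k_S, \Zp) \to \TC(S, \Zp)$ is therefore null-homotopic, and \emph{a fortiori} induces the zero map on $\pi_j$ for every integer $j$.

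The main obstacle will be establishing the strict functoriality of $F$ in Waldhausen's framework: a morphism $A \colon V \to V'$ in $\mathcal{P}_{k_S}$ is a matrix over $k_S$, and defining $F(A)$ requires a choice of lift $\tilde{A}$ to a matrix over $S$. Two such lifts differ by a matrix with entries in $\pi_S S$, and the corresponding chain maps are chain-homotopic via the explicit homotopy whose component in each degree is the matrix $C$ satisfying $\pi_S C = \tilde{A}_1 - \tilde{A}_2$. To promote this up-to-homotopy construction to a strict exact functor, I would pass to a skeletal subcategory of $\mathcal{P}_{k_S}$ and fix once and for all a compatible system of lifts, then verify that the resulting composition discrepancies are absorbed by these explicit chain homotopies without affecting the additivity computation. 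Alternatively, one may invoke the $\infty$-categorical formalism for $K$-theory and $\TC$ of small stable $\infty$-categories, where restriction of scalars along $i$ defines a strictly functorial exact functor between categories of perfect modules and the same additivity argument applies verbatim.
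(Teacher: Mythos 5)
Your plan has two serious problems. First, the target category is wrong: by Proposition~\ref{prop:localizationsequence} and its proof, the topological cyclic homology of $\mathsf{Ch}^b(\mathcal{P}_S,\mathcal{M}_S,\mathcal{T}_S)$ is canonically identified with $\TC(S\,|\,K,\Zp)$, not with $\TC(S,\Zp)$; Remark~\ref{rem:artinian} identifies $\TC'(k_S)$ with $\TC(k_S)$ but says nothing about the category you write. In $\mathsf{Ch}^b(\mathcal{P}_S,\mathcal{M}_S,\mathcal{T}_S)$ the Koszul complex $[S^n \xrightarrow{\pi_S} S^n]$ has $\pi_S$-power-torsion homology and hence is already weakly equivalent to $0$, so the functor you describe is null for trivial reasons, and your argument only reproves the tautology $j^*\circ i_*\simeq 0$ packaged into the localization cofibration sequence. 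To say something about $i_*$ itself you must land in $\mathsf{Ch}^b(\mathcal{P}_S,\mathcal{M}_S)$, whose $\TC$ really is identified with $\TC(S,\Zp)$ via the degree-zero inclusion $F_0$ appearing in the paper's proof.

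Second, and more fundamentally, the functors $F_0$ and $F_1$ you invoke do not exist. Sending $V\cong k_S^n$ to a free $S$-module $S^n$ concentrated in a single degree is not an exact functor in either Waldhausen's setting or the $\infty$-categorical one: a $k_S$-linear map lifts to a matrix over $S$ only non-canonically, and fixing a skeletal subcategory with chosen lifts produces at best a pseudofunctor whose composition laws hold only up to chain homotopy, which is not the strict exact functor that the additivity theorem requires. Passing to perfect complexes does not rescue this, because a functorial free lift of $k_S$-modules to $S$-modules genuinely does not exist as an exact functor; the only cofiber sequence of exact functors $\mathcal{P}_{k_S}\to \mathsf{Ch}^b(\mathcal{P}_S,\mathcal{M}_S)$ one can form naturally, tensoring $i_*(-)$ with $S\xrightarrow{\pi_S}S\to k_S$, has the zero natural transformation as its first map and therefore only yields $K(i^*i_*)\simeq 0$, which tells you nothing about $i_*$. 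Note also that your sketch, if it worked, would prove the much stronger statement that $i_*$ is null-homotopic as a map of spectra; the paper neither proves nor needs this. Its actual argument is entirely different and works degree by degree: for $j=0$ it compares the $K$-theory and $\TC$ localization sequences through the cyclotomic trace and uses that $j^*\colon K_0(S,\Zp)\to K_0(K,\Zp)$ is an isomorphism since both rings are local, together with the isomorphisms of \cite[Theorem~D]{hm}; then, with $i_*(1)=0$ in hand, it applies the projection formula $i_*(i^*(y))=y\cdot i_*(1)$, reducing the claim to the surjectivity of $i^*$ on $\TC_j$ for $j\in\{0,-1\}$, which are the only degrees in which the target is nonzero and which are handled directly, the case $j=-1$ via Lemma~\ref{lem:wittvectors} and a comparison with $\TP_0$.
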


\begin{proof}First, for $j = 0$, we consider the following diagram
with exact rows.
$$\xymatrix{
{ K_0(k_S,\mathbb{Z}_p) } \ar[r]^-{i_*} \ar[d]^-{\tr} &
{ K_0(S,\mathbb{Z}_p) } \ar[r]^-{j^*} \ar[d]^-{\tr} &
{ K_0(K,\mathbb{Z}_p) } \ar[d]^-{\tr} \cr
{ \TC_0(k_S,\mathbb{Z}_p) } \ar[r]^-{i_*} &
{ \TC_0(S,\mathbb{Z}_p) } \ar[r]^-{j^*} &
{ \TC_0(S\,|\,K,\mathbb{Z}_p) } \cr
}$$
In the top row, the map $j^*$ is an isomorphism,
since $S$ and $K$ both are local rings. Therefore, the map $i_*$ in
the top row is zero, and since the left-hand and middle vertical maps
are isomorphisms by~\cite[Theorem~D]{hm}, the map $i_*$ in the bottom
row is zero, as stated. Next, we claim that the map
\vspace{-1mm}
$$\xymatrix{
{ \TC_j(S,\mathbb{Z}_p) } \ar[r]^-{i^{\hskip.5pt *}} &
{ \TC_j(k_S,\mathbb{Z}_p) } \cr
}$$
is surjective. Granting this, the lemma follows. Indeed, if
$x = i^{\hskip.5pt *}(y)$, then 
$$i_*(x) = i_*(x \cdot 1) = i_*(i^{\hskip.5pt *}(y) \cdot 1) = y \cdot
i_*(1),$$
by the projection formula, and we have already proved that
$i_*(1) = 0$. The claims needs proof only if $j = 0$ or $j = -1$,
since the target of the map in question is zero, otherwise, and in
these cases, the domain and target both are free
$\mathbb{Z}_p$-modules of rank $1$. For $j = 0$, the map is a
$\mathbb{Z}_p$-algebra homomorphism, and therefore, it is necessarily
an isomorphism. Finally, for $j = -1$, we consider the diagram
$$\xymatrix{
{ \TP_0(S,\mathbb{Z}_p) } \ar[r]^-{i^{\hskip.5pt *}}
\ar[d]^-{\partial} &
{ \TP_0(k_S,\mathbb{Z}_p) } \ar[d]^-{\partial} \cr
{ \TC_{-1}(S,\mathbb{Z}_p) } \ar[r]^-{i^{\hskip.5pt *}} &
{ \TC_{-1}(k_S,\mathbb{Z}_p). } \cr
}$$
The top horizontal map is canonically identified with the map in the
statement of Lemma~\ref{lem:wittvectors}, and hence, it is
surjective. The right-hand vertical map also is surjective, since
$\TC_*^{-}(k_S,\mathbb{Z}_p)$ is concentrated in even degrees by
B\"{o}kstedt periodicity. It follows that the lower horizontal map is
surjective, as claimed.
\end{proof}

\begin{corollary}\label{cor:tczero}The $\mathbb{Z}_p$-module
$\TC_0(S\,|\,K,\mathbb{Z}_p)$ is free of rank $2$.
\end{corollary}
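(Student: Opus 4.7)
The plan is to invoke the localization cofibration sequence in topological cyclic homology from Addendum~\ref{add:localizationsequence} with $R = S$, namely
$$\TC(k_S,\Zp) \xrightarrow{i_*} \TC(S,\Zp) \xrightarrow{j^*} \TC(S\,|\,K,\Zp) \xrightarrow{\partial} \Sigma\TC(k_S,\Zp),$$
and extract the relevant portion of its long exact sequence of $p$-adic homotopy groups:
$$\TC_0(k_S,\Zp) \xrightarrow{i_*} \TC_0(S,\Zp) \to \TC_0(S\,|\,K,\Zp) \to \TC_{-1}(k_S,\Zp) \xrightarrow{i_*} \TC_{-1}(S,\Zp).$$
By Lemma~\ref{lem:pushforward} both copies of $i_*$ vanish, so the sequence collapses to a short exact sequence
$$0 \to \TC_0(S,\Zp) \to \TC_0(S\,|\,K,\Zp) \to \TC_{-1}(k_S,\Zp) \to 0.$$

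It then suffices to identify each outer term with $\Zp$, for then the sequence is an extension of $\Zp$ by $\Zp$, which splits, and yields a free $\Zp$-module of rank two. For the left term, the first paragraph of the proof of Lemma~\ref{lem:pushforward} already shows that $\TC_0(S,\Zp) \cong K_0(S,\Zp) = \Zp$ via the cyclotomic trace and~\cite[Theorem~D]{hm}, using that $S$ is a local ring.

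The remaining calculation is $\TC_{-1}(k_S,\Zp) \cong \Zp$. The cleanest route is via the Nikolaus--Scholze fiber sequence $\TC \to \TC^{-} \xrightarrow{\varphi - \can} \TP$. B\"okstedt periodicity places $\THH_*(k_S,\Zp)$ in even degrees only, so both the homotopy fixed point and Tate spectral sequences for $\THH(k_S,\Zp)$ have $E_2$-terms concentrated in bidegrees of even total degree; consequently $\TC^{-}_{-1}(k_S,\Zp) = \TP_{-1}(k_S,\Zp) = 0$, and the long exact sequence of the fiber sequence identifies $\TC_{-1}(k_S,\Zp)$ with the cokernel of $\varphi - \can \colon \TC^{-}_0(k_S,\Zp) \to \TP_0(k_S,\Zp)$. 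Under the canonical identifications $\TC^{-}_0(k_S,\Zp) \cong \TP_0(k_S,\Zp) \cong W(k_S)$, the map $\can$ is the identity and $\varphi$ is the Witt vector Frobenius $F$; this is standard and a particular case of computations recalled in~\cite[Section~IV.4]{nikolausscholze}.

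The only step that is not pure bookkeeping is the computation that $\operatorname{coker}(F - 1 \colon W(k_S) \to W(k_S)) \cong \Zp$. Modulo $p$ this is the classical Artin--Schreier map on the finite field $k_S$, whose cokernel is $\Fp$ via the absolute trace; a Nakayama/Mittag--Leffler argument using that $W(k_S)$ is $p$-adically complete lifts this to the identification of the cokernel on $W(k_S)$ itself with $\Zp$. Combining, the short exact sequence takes the form $0 \to \Zp \to \TC_0(S\,|\,K,\Zp) \to \Zp \to 0$, which splits and establishes the corollary.
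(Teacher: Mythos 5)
Your proof is correct and follows the same route as the paper: both deduce the short exact sequence $0 \to \TC_0(S,\Zp) \to \TC_0(S\,|\,K,\Zp) \to \TC_{-1}(k_S,\Zp) \to 0$ from Lemma~\ref{lem:pushforward} and the localization sequence, and then identify the outer terms with $\Zp$. The only difference is that the paper simply asserts those identifications (appealing to ``as already remarked'' in the proof of Lemma~\ref{lem:pushforward}), whereas you carry out the standard computation of $\TC_{-1}(k_S,\Zp)$ via B\"okstedt periodicity and the cokernel of $F-1$ on $W(k_S)$; this is a welcome filling-in of detail rather than a different method.
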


\begin{proof}By Lemma~\ref{lem:pushforward}, we have a short exact
sequence
\vspace{-1mm}
$$\xymatrix{
{ 0 } \ar[r] &
{ \TC_0(S,\mathbb{Z}_p) } \ar[r]^-{j^*} &
{ \TC_0(S\,|\,K,\mathbb{Z}_p) } \ar[r]^-{\delta} &
{ \TC_{-1}(k_S,\mathbb{Z}_p) } \ar[r] &
{ 0, } \cr
}$$
and, we have as already remarked, the left-hand term and the
right-hand term both are free $\mathbb{Z}_p$-modules of rank $1$.
\end{proof}

\begin{proof}[Proof of Theorem~\ref{thm:theoremB}~{\rm (3)}]We consider
the diagram
$$\xymatrix{
{ 0 } \ar[r] &
{ \TP_1(A\,|\,D,\Zp)_{\varphi} } \ar[r]^-{\partial}
\ar[d]^-{\Trd_{A/S}} &
{ \TC_0(A\,|\,D,\Zp) } \ar[r]^-{i} \ar[d]^-{\Trd_{A/S}} &
{ \TC_0^{-}(A\,|\,D,\Zp)^{\varphi} } \ar[r] \ar[d]^-{\Trd_{A/S}} &
{ 0\phantom{,} } \cr
{ 0 } \ar[r] &
{ \TP_1(S\,|\,K,\Zp)_{\varphi} } \ar[r]^-{\partial} &
{ \TC_0(S\,|\,K,\Zp) } \ar[r]^-{i} &
{ \TC_0^{-}(S\,|\,K,\Zp)^{\varphi} } \ar[r] &
{ 0, } \cr
}$$
where $(-)^{\varphi}$ and $(-)_{\varphi}$ indicate the kernel and
cokernel of $\varphi - \can$, respectively. The vertical maps are
induced by the equivalence of cyclotomic spectra from part~(1) of the
theorem. Accordingly, the vertical maps are all isomorphisms and the
diagram commutes. By Corollary~\ref{cor:tczero}, the middle terms are
free $\mathbb{Z}_p$-modules of rank $2$, and we claim that the
right-hand terms are free $\mathbb{Z}_p$-modules of rank $1$. Indeed,
we have already identified $\TC_0^{-}(S\,|\,K,\Zp)$ and
$\TP_0(S\,|\,K,\Zp)$ with $W(k_S)$ and the maps $\varphi$ and $\can$
with the maps induced by $\varphi,\id \colon k_S \to k_S$, and since
$$\begin{xy}
(0,0)*+{ W(\Fp) }="1";
(22,0)*+{ W(k_S) }="2";
(44,0)*+{ W(k_S) }="3";
{ \ar "2";"1";};
{ \ar@<.7ex>^-{W(\varphi)} "3";"2";};
{ \ar@<-.7ex>_-{\id} "3";"2";};
\end{xy}$$
is an equalizer of rings, the claim follows. We conclude that the
left-hand terms are free $\mathbb{Z}_p$-modules of rank $1$ as well. 

We choose a basis
$(e_1,e_2)$ of $\TC_0(S\,|\,K,\Zp)$ such that $e_1$ is in the image of
$\partial$ and such that $i(e_2) = 1$, and let $(e_1',e_2')$ be the basis
$\smash{ (\Trd_{A/S}^{-1}(e_1),\Trd_{A/S}^{-1}(e_2)) }$ of
$\TC_0(A\,|\,D,\Zp)$. By part~(2) of the theorem, the matrix
that represents 
$$\begin{xy}
(0,0)*+{ \TC_0(A\,|\,D,\Zp) }="1";
(32,0)*+{ \TC_0(S\,|\,K,\Zp) }="2";
{ \ar^-{\Tr_{A/S}} "2";"1";};
\end{xy}$$
with respect to these bases is of the form
$$\begin{pmatrix}
d & \;\;a \cr
0 & \;\;d \cr
\end{pmatrix} \in M_2(\Zp).$$
We proceed to show that if $p$ divides $d$, then $d$ does not divide
$a$, which proves part~(3) of the theorem. To this end, we consider
the commutative diagram
$$\begin{xy}
(0,7)*+{ K_0(D,\Zp) }="11";
(29,7)*+{ \TC_0(A\,|\,D,\Zp) }="12";
(63,7)*+{ \TC_0^{-}(A\,|\,D,\Zp)^{\varphi}\phantom{.} }="13";
(0,-7)*+{ K_0(K,\Zp) }="21";
(29,-7)*+{ \TC_0(S\,|\,K,\Zp) }="22";
(63,-7)*+{ \TC_0^{-}(S\,|\,K,\Zp)^{\varphi}. }="23";
{ \ar^-{\tr} "12";"11";};
{ \ar^-{i} "13";"12";};
{ \ar^-{\tr} "22";"21";};
{ \ar^-{i} "23";"22";};
{ \ar^-{N_{D/K}} "21";"11";};
{ \ar^-{\Tr_{A/S}} "22";"12";};
{ \ar^-{\Tr_{A/S}} "23";"13";};
\end{xy}$$
The left-hand terms are both free $\Zp$-modules of rank $1$ generated
by the classes $[D]$ and $[K]$, and $N_{D/K}([D]) = d^2 \cdot
[K]$. Moreover, the lower horizontal maps are ring homomorphisms, and
hence, map the element $1 = [K]$ of the lower left-hand term to the
element $1$ in the lower right-hand term. Since part~(2) of the theorem
shows that the right-hand vertical map is equal to $d \cdot
\Trd_{A/S}$, we conclude that coordinates of the class $\tr([D]) \in
\TC_0(A\,|\,D,\Zp)$ with respect to the basis $(e_1',e_2')$ take the
form
$$\begin{pmatrix}
\, b \, \cr
\, d \, \cr
\end{pmatrix} \in M_{2,1}(\Zp).$$
Hence, the coordinates of $\Tr_{A/S}(\tr([D]))$ with respect to the
basis $(e_1,e_2)$ are
$$\begin{pmatrix}
d & \;\; a \cr
0 & \;\; d \cr
\end{pmatrix} \begin{pmatrix}
\, b \, \cr
\, d \, \cr
\end{pmatrix} = \begin{pmatrix}
d(a+b) \cr
d^2 \cr
\end{pmatrix}.$$
It follows that $a+b \in \Zp$ is divisible by $d$, since
$$\Tr_{A/S}(\tr([D])) = \tr(N_{D/K}([D])
= d^2 \cdot \tr([K]) = d^2 \cdot 1.$$
Therefore, $d$ divides $a$ if and only if $d$ divides
$b$. We now consider the following commutative diagram with exact
rows.
$$\xymatrix{
{ K_0(A,\Zp) } \ar[r]^-{j^*} \ar[d]^{\tr} &
{ K_0(D,\Zp) } \ar[r]^-{\delta} \ar[d]^-{\tr} &
{ K_{-1}(k_T,\mathbb{Z}_p) } \ar[d]^-{\tr} \cr
{ \TC_0(A,\Zp) } \ar[r]^-{j^*} &
{ \TC_0(A\,|\,D,\Zp) } \ar[r]^-{\delta} &
{ \TC_{-1}(k_T,\Zp) } \cr
}$$
Since $A$ and $D$ are local rings, the map $j^*$ in
the top row is an isomorphism between free $\mathbb{Z}_p$-modules of
rank $1$. Moreover, the left-hand vertical map is an isomorphism
by~\cite[Theorem~D]{hm}, since $A$ is a finite
$\mathbb{Z}_p$-algebra. Hence, in the bottom row, the middle term is a
free $\mathbb{Z}_p$-module of rank $2$, and the left-hand term and
the right-hand term both are free $\mathbb{Z}_p$-modules of rank
$1$. Therefore, in the bottom row, the map $j^*$ is injective, and its
cokernel is a free $\mathbb{Z}_p$-module of rank $1$. We conclude
that
$$\tr([D]) = \tr(j^*([A])) = j^*(\tr([A])) \in \TC_0(A\,|\,D,\Zp)$$
is not divisible by $p$. So if $p$ divides $d$, then $d$ does not
divide $b$, and hence, $d$ does not divide $a$, as we wished to
prove.
\end{proof}

\begin{corollary}\label{cor:nontrivialaction}If $p$ divides $d$, then
the $G$-action on $\THH(A\,|\,D)$ is non-trivial. 
\end{corollary}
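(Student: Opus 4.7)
The plan is to argue by contradiction: suppose $p \mid d$ and that the $G$-action on the cyclotomic spectrum $\THH(A\,|\,D)$ is trivializable, meaning there is an equivalence in $\mathsf{CycSp}_p^{BG}$ between $\THH(A\,|\,D)$ with its given $G$-action and $\THH(A\,|\,D)$ with the trivial $G$-action. I will show that this triviality promotes the $\mathbb{T}$-equivariant identity $\Tr_{A/S} \simeq d \cdot \Trd_{A/S}$ of Theorem~\ref{thm:theoremB}(2) to an identity in $\mathsf{CycSp}_p$, contradicting Theorem~\ref{thm:theoremB}(3).

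The key input is the $G$-equivariant composition
$$
\Psi \colon \THH(A\,|\,D) \xrightarrow{f^*} \THH(A \otimes_S T\,|\,D \otimes_K L) \xrightarrow{\Trd_{A \otimes_S T/T}} \THH(T\,|\,L)
$$
appearing in the proof of Theorem~\ref{thm:trreducedtrace}, together with the Galois descent equivalence $\THH(T\,|\,L)^{hG} \simeq \THH(S\,|\,K)$, which enhances Corollary~\ref{cor:tracyclic} to the level of cyclotomic spectra using that $f \colon S \to T$ is \'{e}tale. Under the triviality assumption, $\Psi$ is a $G$-equivariant map out of an object with trivial $G$-action, and therefore factors canonically as $\THH(A\,|\,D) \to \THH(T\,|\,L)^{hG} \simeq \THH(S\,|\,K) \xrightarrow{f^*} \THH(T\,|\,L)$; the first map of this factorization realizes $\Trd_{A/S}$ in $\mathsf{CycSp}_p$, by essentially the same argument that identifies $\Trd_{A/S}$ in Theorem~\ref{thm:trreducedtrace}.

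The same descent argument applied to the $G$-invariant map $\Tr_{A/S}$ (its target having trivial $G$-action) exhibits $\Tr_{A/S}$ as $\Tr_{T/S} \circ \Psi$, where $\Tr_{T/S} \colon \THH(T\,|\,L) \to \THH(S\,|\,K)$ is the Galois trace. Under the descent equivalence, $\Tr_{T/S} \circ f^* \simeq d \cdot \id$ on $\THH(S\,|\,K)$ (since $|G| = d$ and $G$ acts trivially on $S$), so combining this with the factorization of $\Psi$ through $\Trd_{A/S}$ yields $\Tr_{A/S} \simeq d \cdot \Trd_{A/S}$ as maps of $\THH(S\,|\,K,\Zp)$-modules in cyclotomic spectra, contradicting Theorem~\ref{thm:theoremB}(3).

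The main obstacle is to upgrade Corollary~\ref{cor:tracyclic} from the homotopy-group statement it provides to a coherent equivalence $\THH(T\,|\,L)^{hG} \simeq \THH(S\,|\,K)$ in $\mathsf{CycSp}_p$, and to verify that the algebraic Galois trace $\Tr_{T/S}$ corresponds, under this descent, to the norm map associated with the $G$-action on $\THH(T\,|\,L)$. This compatibility is the crux: once it is in place, the hypothetical triviality of the $G$-action on $\THH(A\,|\,D)$ supplies exactly the homotopy coherence needed to lift the identity from $\mathsf{Sp}_p^{B\,\mathbb{T}}$ to $\mathsf{CycSp}_p$, contradicting the non-vanishing of the obstruction class $a \in \Zp/d\Zp$ identified in the proof of Theorem~\ref{thm:theoremB}(3) when $p \mid d$.
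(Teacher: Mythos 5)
Your argument is correct and follows essentially the same strategy as the paper: assume the $G$-action is trivial, use the Galois descent equivalence to promote the composite $\Trd_{A\otimes_S T/T}\circ f^*$ to an equivalence $\Trd_{A/S}$ in $\mathsf{CycSp}_p$ satisfying $d\cdot\Trd_{A/S}\simeq\Tr_{A/S}$, and contradict Theorem~\ref{thm:theoremB}~(3). Your specific derivation of the identity, via the factorization $\Tr_{A/S}\simeq\Tr_{T/S}\circ\Psi$ and the projection formula $\Tr_{T/S}\circ f^*\simeq d\cdot\id$ (using that $T$ is free of rank $d$ over $S$), is a minor reorganization of the base-change and direct-sum-decomposition facts from Lemma~\ref{lem:reducedtrace} and Remark~\ref{rem:reducedtrace} that the paper relies on implicitly, and correctly flags the coherence upgrade of Corollary~\ref{cor:tracyclic} that the paper leaves tacit.
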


\begin{proof}If the $G$-action on $\THH(A\,|\,D)$ were trivial, then
the diagram
$$\xymatrix{
{ \THH(A\,|\,D) } \ar[r]^-{f^*} &
{ \THH(A\otimes_ST\,|\,D\otimes_KL)^{hG} } \ar@<-.7ex>[d]_-{\Trd_{A
    \otimes_ST/T}} \cr
{ \THH(S\,|\,K) } \ar[r]^-{f^*} &
{ \THH(T\,|\,L)^{hG} } \ar@<-.7ex>[u]_-{\Ird_{A \otimes_ST/T}} \cr
}$$
would be meaningful and would define an equivalence
\vspace{-1mm}
$$\begin{xy}
(0,0)*+{ \THH(A\,|\,D) }="1";
(27,0)*+{ \THH(S\,|\,K) }="2";
{ \ar^-{\Trd_{A/S}} "2";"1";};
\end{xy}$$
of $\THH(S\,|\,K)$-modules in cyclotomic spectra such that
$d \cdot \Trd_{A/S} \simeq \Tr_{A/S}$. However, if $p$ divides $d$,
then Theorem~\ref{thm:theoremB}~(3) shows that such an equivalence does
not exist, so the $G$-action on $\THH(A\,|\,D)$ is necessarily
non-trivial.
\end{proof}

\begin{remark}\label{rem:valuationrings}The proof of
Theorem~\ref{thm:trreducedtrace} gives maps of exact sequences
$$\begin{xy}
(0,7)*+{ \cdots }="11";
(21,7)*+{ \THH_j(k_T) }="12";
(47,7)*+{ \THH_j(A) }="13";
(74.5,7)*+{ \THH_j(A\,|\,D) }="14";
(96.5,7)*+{ \cdots }="15";
(0,-7)*+{ \cdots }="21";
(21,-7)*+{ \THH_j(k_S) }="22";
(47,-7)*+{ \THH_j(S) }="23";
(74.5,-7)*+{ \THH_j(S\,|\,K) }="24";
(96.5,-7)*+{ \cdots }="25";
{ \ar^-{\delta} "12";"11";};
{ \ar^-{i_*} "13";"12";};
{ \ar^-{j^*} "14";"13";};
{ \ar^-{\delta} "15";"14";};
{ \ar@<-.7ex>_-{\Trd_{A/S}} "22";"12";};
{ \ar@<-.7ex>_-{\Ird_{A/S}} "12";"22";};
{ \ar@<-.7ex>_-{\Trd_{A/S}} "23";"13";};
{ \ar@<-.7ex>_-{\Ird_{A/S}} "13";"23";};
{ \ar@<-.7ex>_-{\Trd_{A/S}} "24";"14";};
{ \ar@<-.7ex>_-{\Ird_{A/S}} "14";"24";};
{ \ar^-{\delta} "22";"21";};
{ \ar^-{i_*} "23";"22";};
{ \ar^-{j^*} "24";"23";};
{ \ar^-{\delta} "25";"24";};
\end{xy}$$
such that all maps are $\THH_*(S)$-linear and such that the right-hand
vertical maps are isomorphisms defined in this paper. The analogous
statement holds for the groups $\TR_j^n$, $\TR_j$, $\TF_j$, and for
the respective groups with $p$-adic coefficients. We further expect
that the left-hand vertical maps in the diagram agree with the maps
$\Tr_{k_T/k_S}$ and $I_{k_T/k_S}$, respectively. Indeed, this would
explain the appearance of the kernel of $\Tr_{k_T/k_S}$
in the calculations of $\HH_*(A/\Zp)$ by the second author
in~\cite[Theorem~3.5]{larsen} and of $\THH_*(A,\Zp)$ by the third
author and Chan in~\cite[Theorem~5.1]{chanlindenstrauss}.
\end{remark}

\providecommand{\bysame}{\leavevmode\hbox to3em{\hrulefill}\thinspace}
\providecommand{\MR}{\relax\ifhmode\unskip\space\fi MR }
\providecommand{\MRhref}[2]{%
  \href{http://www.ams.org/mathscinet-getitem?mr=#1}{#2}
}
\providecommand{\href}[2]{#2}

\end{document}